\newtheorem{hypothesis}{Hypothesis}
\newtheorem{theorem}{Theorem}[section]
\newtheorem{lemma}[theorem]{Lemma}
\newtheorem{proposition}[theorem]{Proposition}
\newtheorem{corollary}[theorem]{Corollary}
\theoremstyle{definition}
\theoremstyle{remark}
\newtheorem{remark}[theorem]{Remark}
\def\R{{\mathbb R}}
\def\K{{\mathbb K}}
\def\D{{\mathbb D}}
\newcommand{\T}{\mathbb{T}}
\newcommand{\Z}{\mathbb{Z}}
\newcommand{\N}{\mathbb{N}}
\def\F{\mathcal{F}}
\numberwithin{equation}{section}
\def\supp{\mathop{\rm supp}\nolimits}
\def\sign{\mathop{\rm sign}\nolimits}
\newcommand\cro[1]{\langle #1 \rangle}
\begin{document}

\title[Well-posedness for dispersive Burgers equation]{Low regularity well-posedness of nonlocal dispersive perturbations of Burgers' equation}

\author[L. Molinet, D. Pilod and S. Vento]{Luc Molinet, Didier Pilod  and St\'ephane Vento}

\address{Luc Molinet,  Institut Denis Poisson, Universit\'e de Tours, Universit\'e d'Orl\'eans, CNRS (UMR 7013), Parc Grandmont, 37200 Tours, France.}
\email{Luc.Molinet@univ-tours.fr}

\address{Didier Pilod, Department of Mathematics, University of Bergen, PO Box 7803, 5020 Bergen, Norway. }
\email{Didier.Pilod@uib.no}

\address{St\'ephane Vento, Universit\'e Sorbonne Paris Nord, Laboratoire Analyse, G\'eom\'etrie et Applications, LAGA, CNRS, UMR 7539, F-93430, Villetaneuse, France.}
\email{vento@math.univ-paris13.fr}

\date{\today}

\subjclass[2020]{35A01, 35A02, 35B45, 35E15, 35Q53}
\keywords{Nonlocal nonlinear dispersive equations, Local and global well-posedness, Modified energy}
\maketitle

\begin{abstract} 
We consider the Cauchy problem associated to a class of dispersive perturbations of Burgers' equations, which contains the low dispersion Benjamin-Ono equation, (also known as low dispersion fractional KdV equation),
$$ \partial_tu-D_x^{\alpha}\partial_xu=\partial_x(u^2) \, ,$$ 
and prove that it is locally well-posed in $H^s(\mathbb K)$, $\mathbb K=\mathbb R$ or $\mathbb T$, for $s>s_{\alpha}$, where 
\begin{equation*}
s_\alpha=\begin{cases}
 1-\frac{3\alpha}4 & \text{for} \quad \frac23 \le \alpha \le 1; \\ 
  \frac 32(1-\alpha) &  \text{for}  \quad \frac13 \le \alpha \le \frac23; \\ 
 \frac 32-\frac{\alpha}{1-\alpha}  & \text{for} \quad 0 < \alpha  \le \frac13 .
 \end{cases}
\end{equation*} 
The uniqueness is unconditional in $H^s(\mathbb K)$ for $s>\max\{\frac12,s_{\alpha}\}$. Moreover, we obtain \emph{a priori} estimates for the solutions at the lower regularity threshold $s>\widetilde{s}_\alpha$ where
\begin{equation*}
\widetilde{s}_\alpha=\begin{cases}
 \frac 12-\frac \alpha 4 & \text{for} \quad \frac23 \le \alpha \le 1; \\ 
  1-\alpha &  \text{for}  \quad \frac12 \le \alpha \le \frac23; \\ 
 \frac 32-\frac{\alpha}{1-\alpha}  & \text{for} \quad 0 < \alpha  \le \frac12 .
 \end{cases}
\end{equation*}
As a consequence of these results and of the Hamiltonian structure of the equation, we deduce global well-posedness in $H^s(\mathbb K)$ for $s>s_{\alpha}$ when $\alpha>\frac23$, and in the energy space $H^{\frac{\alpha}2}(\mathbb K)$ when  $\alpha>\frac45$.

The proof combines an energy method for strongly nonresonant dispersive equations, introduced by the first and third authors, with refined Strichartz estimates and modified energies. Moreover, we use a full symmetrisation of the modified energy both for the \textit{a priori} estimate and for the estimate of the difference of two solutions. This symmetrisation allows some cancellations of the resulting symbol which are crucial to close the estimates. 
\end{abstract}

\section{Introduction}

\subsection{Dispersive perturbations of Burgers' equation}
In this paper we continue our investigation of the well-posedness at low regularity of nonlocal dispersive perturbations of the Burgers' equation. More specifically, we consider the class of initial value problems (IVP)
\begin{equation}\label{fKdV}
 \begin{cases} 
 \partial_tu -L_{\alpha+1}u = \partial_x(u^2), \\ 
 u(0,\cdot)=u_0,
 \end{cases}
\end{equation}
where $u=u(t,x)$ is a real valued function, $t \in \mathbb R$, $x\in \K=\R$ or $\T(=\R/2\pi \Z)$ and $L_{\alpha+1}$ is a Fourier multiplier of symbol $i\omega_{\alpha+1}$, with $\omega_{\alpha+1}$ behaving in absolute value at high frequency as $|\xi|^{\alpha+1}$ for $0<\alpha \le 1$. We refer to Hypothesis \ref{hyp1} below for more precise assumptions about $\omega_{\alpha+1}$. 

This class of equation contains in particular the fractional KdV equation (fKdV), also known as dispersion generalized Benjamin-Ono equation, 
\begin{equation} \label{fKdVbis}
\partial_tu -D^{\alpha}\partial_xu = \partial_x(u^2) ,
\end{equation}
where $D^\alpha$ is the Fourier multiplier of symbol $|\xi|^{\alpha}$.
In the cases where $\alpha=2$, respectively $\alpha=1$, equation \eqref{fKdVbis} corresponds to the well-known Korteweg-de Vries (KdV), respectively Benjamin-Ono (BO) equations, while in the case $\alpha=0$, it corresponds to the inviscid Burgers equation. 

Also of physical interest with connections to fluid mechanics are the cases $\alpha=\pm \frac12$: the Whitham equation 
\begin{equation} \label{Whitham}
\partial_tu -w(D)\partial_xu = \partial_x(u^2) ,
\end{equation}
where $w(D)$ is the Fourier multiplier of symbol $w(\xi)= \left( \frac{\tanh (\xi)}{\xi}\right)^{\frac12} (1+\tau \xi^2)^{\frac12}$ is included in the class of equations \eqref{fKdV}. It was obtained by Whitham \cite{Whitham67} by replacing the dispersion $\partial_x^3$ in the KdV equation by that of the linearized water wave equations around the equilibrium. Here $\tau \ge 0$ is a parameter related to surface tension, so that the case of pure gravity waves ($\tau=0$) corresponds to $\alpha=-\frac12$, while the case of capillary-gravity waves ($\tau>0$) corresponds to $\alpha=\frac12$ in our setting. We also refer to \cite{KLPS18} for a detailed survey on the Whitham equation.

While both the KdV and the BO equation are completely integrable, no integrable structures are known  for the fKdV equations with non-integer values of $\alpha$. However, equation \eqref{fKdVbis} enjoys an Hamiltonian structure and the quantities 
\begin{equation} \label{Mass}
M(u)=\int_{\mathbb K} u^2(t,x) dx
\end{equation}
and
\begin{equation} \label{MassEnergy}
H(u)=\frac12 \int_{\mathbb K} (D^{\frac{\alpha}2}u)^2(t,x)dx+\frac13 \int_{\mathbb K} u^3(t,x) dx 
\end{equation}
are conserved by its flow.
Moreover, the solutions of fKdV are preserved under the scaling transformation $u_{\lambda}(x,t)=\lambda^{\alpha}u(\lambda x,\lambda^{1+\alpha}t)$,  $\lambda>0$, which leaves the $\dot{H}^s$ invariant for $s=\frac12-\alpha$. In particular, equation \eqref{fKdV} is \emph{mass-critical} for $\alpha=\frac12$ and \emph{energy critical} for $\alpha=\frac13$. As explained above, the case $\alpha=\frac12$ is of particular interest because of its connection with fluid mechanics. 

\subsection{Well-posedness review on fractional KdV equations}

We are interested in the local well-posedness of the IVP associated to \eqref{fKdV} in the $L^2$-based Sobolev spaces $H^s$ at low regularity. The notion of \emph{local well-posedness} (LWP) is to be understood in the sense of Hadamard and Kato, meaning that for every initial datum $u_0 \in H^s$, there exist a positive time $T=T(\|u_0\|_{H^s})$ and a solution $u \in C([0,T] :  H^s)$ of \eqref{fKdV} satisfying $u(0,\cdot)=u_0$. This solution is unique, at least in the resolution space in which it is constructed. Moreover, the flow map data-solution which to the initial datum $u_0$ maps the solution $u$ is continuous from $H^s$ to $C([0,T'] : H^s)$, for any $T'<T$. If the solution map is smooth, the flow is said to be \emph{semi-linear}, otherwise, if the solution map is only continuous, the flow is said to be \emph{quasi-linear}. Finally, when $T$ can be chosen arbitrarily large in the definition above, the IVP is said to be \emph{globally well-posed} (GWP) in $H^s$. 

Based on the numerical evidences obtained by Klein and Saut in \cite{KlSa15}, it is conjectured that the IVP associated to \eqref{fKdVbis} is globally well-posed  in the sub-critical case $\alpha>\frac12$, whereas blow-up may occur for $ \alpha\le 1/2$. In the previous work \cite{MPV}, we proved that all "sufficiently  smooth" solutions are global  for $\alpha>\frac67$. Here, we extend that result to $\alpha>\frac23$.

We now carry out a review of the well-posedness of the IVP associated with the classes \eqref{fKdV} and \eqref{fKdVbis} in $H^s$. Using classical energy estimates and compactness arguments, one obtains local well-posedness in $H^s$ for $s > \frac{3}{2}$, both on the real line and the torus, for the IVP \eqref{fKdV}, under the sole assumption that the operator $L_{\alpha+1}$ is skew-adjoint (see, for instance, \cite{ABFS89}). This regularity threshold is optimal in the absence of dispersion: the Burgers' equation, as noted in Remark 1.6 of \cite{LiPiSa14}, fails to be well-posed in $H^{\frac32}(\mathbb R)$. 

When $\alpha > 0$, the dispersive nature of the equation provides additional smoothing, which can be exploited to lower the regularity requirement. This was achieved for the KdV equation in the 90's in seminal works by Kenig, Ponce, and Vega \cite{KPV93,KPV96}, and by Bourgain \cite{Bo93}, by applying a fixed point argument on the associated Duhamel formulation in some suitable resolution spaces. The resolution space of Kenig, Ponce, and Vega is constructed using the linear smoothing effects of the equation, namely Strichartz estimates, local Kato smoothing, and maximal function estimates, while Bourgain's approach is based on the Fourier restriction norm method. As a consequence of these results, the flow map of KdV is semi-linear in any Sobolev space $H^s$, with enough regularity ($s>-\frac34$ to be precise). 

However, this result breaks down, at least on the line, as soon as $\alpha<2$. Indeed it was proved by Molinet, Saut and Tzvetkov in \cite{MoSaTz01} that the flow map is quasi-linear in any Sobolev space $H^s(\mathbb R)$, $s\in \mathbb R$, so it is not possible to solve the IVP by using a fixed-point argument on the associated Duhamel formulation. One has then to come back to compactness methods and energy estimates to prove well-posedness below the threshold $s=\frac32$.

This was done for the Benjamin-Ono equation by Ponce \cite{Po91} who established global well-posedness in $H^\frac32(\mathbb R)$ by using the Strichartz estimate $\|e^{tD\partial_x}u_0\|_{L^4_tL^{\infty}_x} \lesssim \|u_0\|_{L^2}$. Later on, Koch and Tzvetkov \cite{KoTz03}, and thereafter Kenig and Koenig \cite{KK03}, pushed down this result up to $s>\frac54$ and $s>\frac98$, respectively. These improvements were achieved by estimating the norm $\int_0^T\|\partial_xu(t)\|_{L^{\infty}_x}dt $ of a solution  localised around a dyadic frequency $N$.  The key idea was to partition the time interval $[0, T]$ into subintervals of length approximately $N^{-\delta}$, for some suitable $\delta > 0$, apply Strichartz estimates on each subinterval, and then sum over the contributions from all intervals. Similar ideas were previously used  in the wave equation setting \cite{BC99,Ta00} and in the Schr\"odinger equation setting \cite{BGT04}. We also refer to a short-time Strichartz estimate in the periodic setting, established by Kenig, Ponce, and Vega \cite{KPV}, which will be useful in this work. 

In a different direction, Tao introduced in \cite{Tao04} a complex version of the Cole-Hopf transformation, localized in frequency, to cancel the problematic high-low frequency interactions in the nonlinearity of the BO equation.  This \emph{gauge} transform, combined with Strichartz estimates, enabled him to prove global well-posedness for BO in $H^1(\mathbb R)$. Building on this idea and using the Fourier restriction norm method, Ionescu and Kenig \cite{IK07}, and independently Molinet \cite{Mo07,Mo08}, proved global well-posedness in $L^2$ on the real line and on the torus, respectively (see also Burq and Planchon \cite{BuPl08} for a result in  $H^s(\mathbb R)$, $s>\frac14$). We also refer to the works \cite{MoPi12, IfTa19} for alternative proofs of global well-posedness in $L^2$. The approach of Molinet and Pilod \cite{MoPi12}, which still relies on the Fourier restriction norm method, applies both on the real line and the torus, and also establishes unconditional uniqueness (see \cite{Kishi22,MoPi23} for further results in this direction). In contrast, the proof by Ifrim and Tataru \cite{IfTa19} does not use the Fourier restriction norm method and additionally provides long-time asymptotics for small data. 
Recently, by exploiting the complete integrability, the global well-posedness threshold for the BO equation was lowered to $H^s$, $s>-\frac12$, first on the torus by G\'erard, Kappeler and Topalov in \cite{GKT23}, and then on the line by Killip, Laurens and Visan \cite{KiLaVi24}.

Although the gauge transform is somewhat related to the integrability of the equation (see \cite{GKT24}), the method appears more robust and adaptable to dispersive equations beyond the BO equation. We refer to \cite{Ifsa25,CLOP24} for well-posedness results in $L^2$ on $\mathbb R$ and $\mathbb T$ for the Intermediate Long Wave equation relying on the BO gauge, (see also \cite{GaLa25} for an improvement to $H^{-\frac12}(\mathbb T)$ using the integrable structure of BO). By using a paradifferential version of the gauge Herr, Ionescu, Kenig and Koch \cite{HIKK10} proved global well-posedness in $L^2(\mathbb R)$ for the IVP associated with \eqref{fKdVbis}, for $1<\alpha<2$. This result was extended recently to negative regularities by Ai and Liu \cite{AiLiu24} who employed a pseudo-differential version of the gauge transform. At present, it remains an open question whether these techniques can be extended to lower dispersions  $0<\alpha<1$. Nevertheless, we would like to mention the recent work of Said \cite{Said23} in this direction, where a paralinearized version of \eqref{fKdVbis} is considered. 

Other types of energy methods have been employed to study the IVP for \eqref{fKdV} in the case $1<\alpha<2$. In \cite{Guo12}, Guo used the short-time Fourier restriction norm method, in the framework introduced by Ionescu, Kenig, Tataru for KPI in \cite{IKT08}, to prove local well-posedness in $H^s(\mathbb R)$, $s>2-\alpha$. This result was improved by Molinet and Vento \cite{MV}, who established unconditional well-posedness in $H^s$, $s>1-\frac{\alpha}2$ both on the line and on the torus, by introducing a new energy method for strongly resonant dispersive equations.  More recently, Schippa \cite{Schip20} combined the short-time Fourier restriction norm method with modified energies to further lower the threshold in the periodic setting, proving well-posedness in $L^2$ when $\frac32 \le \alpha \le 2$ and in $H^s$ for $s>\frac32-\alpha$ when $1 \le \alpha \le \frac32$. 

Only few results are currently available in the literature for \eqref{fKdVbis} in the case $0<\alpha<1$. On the line, Linares, Pilod and Saut \cite{LiPiSa14} established local well-posedness in $H^s(\mathbb R)$ for $s>\frac32-\frac{3\alpha}8$ by using the refined Strichartz estimate as in \cite{KoTz03,KK03}. This result was subsequently improved by the authors of the present paper in \cite{MPV} to $s>\frac32-\frac{5\alpha}4$, which in turn yields global well-posedness in the energy space $H^{\frac{\alpha}2}(\mathbb R)$ for $\alpha>\frac67$ by relying on the conserved quantities in \eqref{MassEnergy}. The proof in \cite{MPV} combines the energy method in \cite{MV} with modified energies. On the torus, J\"ockel proved recently in \cite{Joc24} local well-posedness in $H^s(\mathbb T)$ for $s>\frac32-\alpha$ by combining the short time restriction norm method with modified energies. In this work, we further improve these results, both on the line and on the torus. 

Finally, we would like to mention some interesting related results concerning the small data, long-time existence problem for fKdV equations. In the case of the Burgers-Hilbert equation (corresponding to $\alpha=-1$ in \eqref{fKdVbis}), Hunter and Ifrim~\cite{HI}, as well as Hunter, Ifrim, Tataru, and Wong~\cite{HITW}, proved that solutions with initial data of size $\epsilon \ll 1$ exist on a time interval of length at least $\epsilon^{-2}$. These results were later extended by Ehrnstr\"om and Wang to the fKdV equations \eqref{fKdVbis} for $-1<\alpha<1$, $\alpha \neq 0$ in~\cite{EhWa19}, and to more general Whitham-type symbols in~\cite{EhWa22}. The proofs of all these results rely on modified energies and normal form transformations.

\subsection{Statements of the results}

Here, we work on $\K\in\{\T, \R\}$. As explained above, we focus on the case $0<\alpha\le 1$. Our regularity threshold $s_{\alpha}$ is given by 
\begin{equation}\label{cond-s}
s_\alpha=\begin{cases}
 1-\frac{3\alpha}4 & \text{for} \quad \frac23 \le \alpha \le 1; \\ 
  \frac 32(1-\alpha) &  \text{for}  \quad \frac13 \le \alpha \le \frac23; \\ 
 \frac 32-\frac{\alpha}{1-\alpha}  & \text{for} \quad 0 < \alpha  \le \frac13 .
 \end{cases}
\end{equation}
We also define the regularity threshold $\widetilde{s}_\alpha$ for the \emph{a priori} estimate  by
\begin{equation}\label{cond-stilde}
\widetilde{s}_\alpha=\begin{cases}
 \frac 12-\frac \alpha 4 & \text{for} \quad \frac23 \le \alpha \le 1; \\ 
 1-\alpha &  \text{for}  \quad \frac12 \le \alpha \le \frac23; \\ 
 \frac 32-\frac{\alpha}{1-\alpha}  & \text{for} \quad 0 < \alpha  \le \frac12 .
 \end{cases}
\end{equation}
Observe in particular that $s_{\alpha} \ge \widetilde{s}_\alpha$.

Moreover, we assume that the symbol $\omega_{\alpha+1}$ of the Fourier multiplier $L_{\alpha+1}$ satisfies the following assumption.
\begin{hypothesis}\label{hyp1}
$\omega_{\alpha+1}\in C^{1}(\R)\cap C^3(\R\backslash\{0\})$ is a real-valued odd function satisfying: there exists $\xi_0>0$ such that, for all $|\xi| \ge \xi_0$ and $\beta\in\{0,1,2,3\}$,
\begin{equation} \label{hyp1.1}
\left| \partial^\beta\omega_{\alpha+1}(\xi) \right| \sim |\xi|^{\alpha+1-\beta}
\end{equation} 
\end{hypothesis}

\begin{remark} \label{Rem:symb} It follows from the mean value theorem and the fact that $\omega_{\alpha+1}(0)=0$ ($\omega_{\alpha+1}$ is odd) that the symbol $\omega_{\alpha+1}$ also satisfies, for $|\xi| \le \xi_0$, 
\begin{equation} \label{hyp1.2}
\left| \omega_{\alpha+1}(\xi) \right| \lesssim |\xi| .
\end{equation} 
\end{remark}

\begin{remark}The following symbols satisfy Hypothesis \ref{hyp1}:
\begin{enumerate}
\item for $0<\alpha \le 1$: the homogeneous symbol $\omega_{\alpha+1}(\xi)=|\xi|^{\alpha}\xi$  corresponding to the dispersive operator $D^{\alpha}\partial_x$ in the fKdV equation \eqref{fKdVbis}; 
\item for $\alpha=\frac12$: the Whitham symbol $\omega_W(\xi)= \left( \frac{\tanh (\xi)}{\xi}\right)^{\frac12} (1+\tau \xi^2)^{\frac12}$, with $\tau>0$ corresponding to the Whitham equation with surface tension \eqref{Whitham}; 
\item for $\alpha=1$: the symbol $\omega_{ILW}(\xi)=\xi^2\coth(\delta \xi)$ corresponding to the Intermediate Long Wave equation (ILW). 
\item for $\alpha=1$: the symbol $\omega_S(\xi)=\left(\left(1+\xi^2 \right)^{\frac12}-1\right)\xi$ corresponding to the Smith equation (see \cite{Smith}).
\end{enumerate}
\end{remark}

Next we state our main result. 
\begin{theorem}[Local well-posedness]\label{th:main}
  Let $\K\in\{\T, \R\}$, $\alpha\in (0,1]$, $s>s_\alpha$ and $s'>\widetilde{s}_\alpha$ with $s' \le s$. Assume that $L_{\alpha+1}$ satisfies Hypothesis \ref{hyp1}. There exist positive constants $A_0$ and $\beta_0$ such that the following is true. For any $u_0\in H^s(\K)$, let  
  \begin{equation*}
  T:=A_0 (1+\|u_0\|_{H^{s'}})^{-\beta_0} .
  \end{equation*} 
  Then, there exists a unique solution $u$ of the IVP \eqref{fKdV} on the time interval $[0, T]$ satisfying
\begin{equation} \label{th:main.1}
  u\in C([0, T] : H^s(\K))\cap L^2\big((0,T) : W^{(s-(\frac12-\frac \alpha 4))_-,\infty}(\K) \big).
 \end{equation}
 Moreover, for any $0<\widetilde{T}<T$, there exists a neighbourhood $\mathcal{U}$ of $u_0 \in H^s(\mathbb K)$ such that the flow-map data-solution $v_0\mapsto v$ is continuous from $\mathcal{U}$ into $C([0, \widetilde{T}] : H^s(\K))$.
\end{theorem}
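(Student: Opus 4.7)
The plan is to adapt the energy method for strongly nonresonant dispersive equations introduced by the first and third authors, combined with refined Strichartz estimates and modified energies, following the general strategy of \cite{MPV} but pushing the analysis further through a full symmetrisation of the cubic correction.

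First, I would work at the level of smooth solutions, obtained via a parabolic regularisation of \eqref{fKdV} (or via a Bona--Smith type scheme). The existence of smooth solutions for $u_0\in H^\infty(\K)$ is classical from energy estimates plus compactness, so the heart of the matter is to derive the relevant a priori bounds with constants depending only on $\|u_0\|_{H^{s'}}$.

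Second, I would construct a modified energy of the form $E_s(u)=\|u\|_{H^s}^2 + E_s^{(3)}(u)$, where the cubic correction $E_s^{(3)}$ has Fourier symbol roughly $\sigma_s(\xi_1,\xi_2,\xi_3)/\Omega_3(\xi_1,\xi_2,\xi_3)$, with $\Omega_3(\xi_1,\xi_2,\xi_3)=\omega_{\alpha+1}(\xi_1+\xi_2+\xi_3)-\omega_{\alpha+1}(\xi_1)-\omega_{\alpha+1}(\xi_2)-\omega_{\alpha+1}(\xi_3)$ the three-wave resonance function. The numerator $\sigma_s$ is chosen to kill, after time differentiation, the non-resonant cubic contribution to $\tfrac{d}{dt}\|u\|_{H^s}^2$ coming from $\partial_x(u^2)$. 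Here it is crucial to fully symmetrise $\sigma_s$ in the three variables $(\xi_1,\xi_2,\xi_3)$: the symmetric average enjoys a better pointwise bound than the naive choice, producing cancellations in the remaining quartic term that are needed to close the estimate at the threshold $\widetilde{s}_\alpha$. The resulting quartic remainder is then estimated in $L^1_t$ using Bernstein, Littlewood--Paley, and above all refined Strichartz norms.

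Third, the linear and quartic terms are controlled by short-time refined Strichartz estimates: after localising at dyadic frequency $N$, split $[0,T]$ into subintervals of length $N^{-\delta}$ (with $\delta=\delta(\alpha)$ optimised against the level of regularity), apply the standard Strichartz estimate $\|e^{tL_{\alpha+1}}P_N u_0\|_{L^4_tL^\infty_x}$ on each subinterval, and sum; the loss is exactly what is absorbed by the gain $s-\bigl(\tfrac12-\tfrac\alpha4\bigr)_-$ appearing in \eqref{th:main.1}. The interval length $\delta$, the resonance lower bound on $\Omega_3$ from Hypothesis \ref{hyp1}, and the regularity of the symmetrised symbol $\sigma_s$ together determine the thresholds $s_\alpha$ and $\widetilde{s}_\alpha$.

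Fourth, an analogous symmetrised modified energy is constructed for the difference $v=u_1-u_2$ of two smooth solutions, at a regularity one notch below $s$. The difference equation is linear in $v$ with variable coefficient $u_1+u_2$, so the cubic correction becomes bilinear in $v$ and linear in $u_1+u_2$, and the symmetrisation must be carried out with the appropriate symmetry class; this supplies the estimate
\[
\|v\|_{L^\infty_T H^{s-1}}\le C\bigl(\|u_1\|_{L^\infty_T H^{s'}},\|u_2\|_{L^\infty_T H^{s'}}\bigr)\|v(0)\|_{H^{s-1}}.
\]
Combined with a Bona--Smith regularisation of the initial datum, this yields existence in $C([0,T]:H^s)$, uniqueness in the resolution class, and continuity of the flow map. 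Unconditional uniqueness above $s>\max\{\tfrac12,s_\alpha\}$ follows by revisiting the difference estimate at a lower regularity using the $H^{1/2}$ embedding into the relevant Strichartz space.

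I expect the main obstacle to be the closure of the quartic remainder at the threshold $\widetilde{s}_\alpha$ in the low-dispersion regime $\alpha\le 2/3$: there the resonance function $\Omega_3$ degenerates in regions where $\xi_1+\xi_2+\xi_3$ is small, the naive symbol $\sigma_s/\Omega_3$ is too singular, and only the fully symmetrised version enjoys cancellations strong enough to be controlled by the refined Strichartz gain. A secondary difficulty is propagating this mechanism consistently to the difference estimate, where the symmetry class of the correction is different and care is needed to treat the high-high-to-low resonant interaction of $v$ with itself against the background $u_1+u_2$.
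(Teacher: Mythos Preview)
Your overall strategy matches the paper's, but three structural points are off and the first is a genuine gap.

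\textbf{The difference is not estimated at $H^{s-1}$.} The paper controls $w=u-v$ in a low-frequency weighted space $\overline{H}^\sigma$ (overline meaning a weight $\langle|\xi|^{-1}\rangle$) with
\[
-\tfrac12+\tfrac\alpha4<\sigma<\min\bigl\{\tfrac{\alpha-1}2,\,s-\tfrac32+\alpha\bigr\}<0.
\]
For $\alpha=1$ and $s$ near $s_\alpha=\tfrac14$, your choice $\sigma=s-1<-\tfrac34$ violates the lower bound $\sigma>-\tfrac14$, so the scheme cannot close. Moreover, at negative regularity the $X^{s,b}$ bound on $w$ cannot be closed by a fractional Leibniz rule as for the solution itself; the paper replaces $X^{\sigma-1,1}$ by the sum space $F^{\sigma,\frac12}=X^{\sigma-1,1}+X^{\sigma,(1/2)_+}$, a point absent from your proposal. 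The low-frequency weight is essential because the regularised data $\Pi_{\le 2^n}u_0$ all share the same low modes, so $u_n-u_m\in\overline{L}^2$.

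\textbf{Existence and continuity do not use Bona--Smith.} After regularising, the paper combines the Lipschitz bound at level $\sigma$ with a high-frequency smoothing effect (Corollary~\ref{coro:Eshigh}): the modified high-frequency energy $E^s_{>N_0}$ satisfies $|E^s_{>N_0}(t)-E^s_{>N_0}(0)|\lesssim N_0^{-\gamma}\|u\|_{\widetilde Y^{s'}_T}^2\|u\|_{\widetilde Y^{s}_T}^2$, which directly shows $\{u_n\}$ is Cauchy in $C([0,T']:H^s)$ and gives continuity of the flow without Bona--Smith or frequency envelopes. Your Bona--Smith route may also work, but it is not what the paper does.

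\textbf{The obstruction is in $\Omega_4$, not $\Omega_3$.} On the hyperplane $\Gamma^3$ one has $\xi_1+\xi_2+\xi_3=0$ identically, so ``$\Omega_3$ degenerates where $\xi_1+\xi_2+\xi_3$ is small'' is not the issue. The delicate step is the \emph{quartic} remainder produced by the normal form, governed by $\Omega_4\sim M_{\min}M_{\mathrm{med}}N_{\max}^{\alpha-1}$ on $\Gamma^4$. The thresholds $s_\alpha$ and $\widetilde s_\alpha$ come from bounding the fully symmetrised quartic symbols $a_4$ and $\widetilde a_4$ (Appendix~\ref{app:symbols}) in the resonant region $N_1\sim N_2\sim N_3\sim N_4$ with $M_{\min}\sim M_{\mathrm{med}}\ll N$ and the mildly resonant regions $N_{\min}\ll N_{\mathrm{thd}}\sim N_{\max}$; the symmetrisation yields second-order cancellations there that are what actually lower the threshold.
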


\begin{remark}
In the case $\alpha=1$ including the Benjamin-Ono and Intermediate Long Wave equations, we obtain local well-posedness in $H^s(\K)$, $s>\frac14$. This result is similar to our earlier work in \cite{MPV} on the real line and, to our knowledge, is new on the torus without relying on a gauge transformation.  It remains an open question whether the threshold $s=\frac14$ can be lowered for these equations without using a gauge transform. 
\end{remark}

\begin{remark} In the case $0<\alpha<\frac15$, the result is less sharp than in our previous work \cite{MPV} where we worked at the regularity $s>\frac32-\frac{5\alpha}4$. It is likely that this could be improved at the cost of introducing additional technical complications. However, for the sake of simplicity and clarity of presentation, we choose to not pursue these refinements here.
\end{remark}

\begin{corollary}[Unconditional uniqueness] \label{coro:uu}
Let $\K\in\{\T, \R\}$, $\alpha\in (0,1]$ and $s>\max\{\frac12,s_\alpha\}$.  Assume that $L_{\alpha+1}$ satisfies Hypothesis \ref{hyp1}. Let $u_0 \in H^s$. Then there exists a unique solution to \eqref{fKdV} in $C([0,T] : H^s(\mathbb K))$, where $T$ is the existence time in Theorem \ref{th:main}.
\end{corollary}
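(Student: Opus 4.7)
The plan is to establish uniqueness in the class $C([0,T]: H^s(\mathbb K))$, for $s>\max\{\frac12, s_\alpha\}$, by adapting the difference estimate from the proof of Theorem \ref{th:main} so that no auxiliary norm from the resolution space \eqref{th:main.1} is required for the second solution. The crucial additional input, compared to Theorem \ref{th:main}, is the Sobolev embedding $H^s(\mathbb K)\hookrightarrow L^\infty(\mathbb K)$ provided by $s>\frac12$: it permits us to replace any Strichartz-type $L^2_T L^\infty_x$ bound on the second solution by a direct $L^\infty_T L^\infty_x$ bound.

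Concretely, let $u_1$ be the solution produced by Theorem \ref{th:main}, which automatically lies in \eqref{th:main.1}, and let $u_2\in C([0,T]: H^s(\mathbb K))$ be any other solution emanating from $u_0$. The difference $w:=u_1-u_2$ satisfies
\begin{equation*}
\partial_t w - L_{\alpha+1}w = \partial_x\bigl(w(u_1+u_2)\bigr),\qquad w(0)=0.
\end{equation*}
I would rerun the symmetrised modified energy developed for the proof of Theorem \ref{th:main} applied to $w$ at some regularity level $\sigma\le s$, using the same dyadic Littlewood-Paley decomposition and the cancellations provided by the symmetrisation. Whenever the original estimate invokes the auxiliary Strichartz-type norm of a factor coming from $u_2$, I would instead absorb that factor through $\|u_2\|_{L^\infty_T L^\infty_x}\lesssim \|u_2\|_{L^\infty_T H^s_x}$. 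A Gronwall argument then forces $w\equiv 0$ on $[0,T]$.

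The main obstacle will be checking that \emph{every} term in the modified energy identity for $w$ can indeed be closed using only $L^\infty_T H^s_x$ control of $u_1$ and $u_2$. In particular, the time-derivative corrections of the modified energy typically produce trilinear (or higher) quantities where more than one factor might need $L^\infty_x$ control; in those cases one has to exploit the dispersive gain from $\omega_{\alpha+1}$ more carefully, or share the $L^\infty_x$ cost between the two solutions via fractional Leibniz. If some contribution genuinely resists this treatment, I would fall back on a Bona--Smith-type argument: regularise $u_0$ via $P_{\le n}u_0$, use Theorem \ref{th:main} to build smooth approximants $u_n\to u_1$ in $C([0,T]: H^s(\mathbb K))$, estimate $u_2-u_n$ at some lower regularity $\sigma<s$ where every term closes, and then upgrade the convergence using the continuity of $u_2$ in $H^s$ to conclude $u_2=u_1$.
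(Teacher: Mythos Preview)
Your key observation---that $s>\tfrac12$ provides the Sobolev embedding $H^s\hookrightarrow L^\infty$---is exactly right, but your execution is far more complicated than necessary, and the obstacle you flag (whether every term in the modified energy closes with only $L^\infty_T H^s_x$ control) is a real concern that you do not resolve. In particular, the difference estimate in Proposition~\ref{prop:E_tilde_sigma} is stated in terms of the $\widetilde{Y}^s_T$ and $\overline{Z}^\sigma_T$ norms, which carry Bourgain-type components ($X^{s-1,1}_T$ and $F^{\sigma,\frac12}_T$), not just Strichartz norms; your substitution $\|u_2\|_{L^\infty_T L^\infty_x}\lesssim\|u_2\|_{L^\infty_T H^s_x}$ does not immediately address those.

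The paper's proof sidesteps all of this in two lines. Rather than modifying the estimates, it shows that for $s>\tfrac12$ \emph{any} solution $u\in C([0,T]:H^s)$ already lies in the full resolution space $Y^s_T$. Indeed, Sobolev gives $u\in L^2_T L^\infty_x$ automatically, and then the refined Strichartz estimate (Proposition~\ref{propse}) bootstraps this to control of the full $Y^s_T$ norm:
\[
C([0,T]:H^s)\cap L^2((0,T):L^\infty)\subseteq Y^s_T.
\]
Once both $u$ and $v$ are known to lie in $Y^s_T$, the Lipschitz bound \eqref{Lip} already established for the uniqueness part of Theorem~\ref{th:main} applies verbatim, and you are done. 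No re-running of energy estimates, no term-by-term substitution, and no Bona--Smith fallback is needed.
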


The quantities \eqref{Mass} and
\begin{equation} \label{Energy-general}
E(u)=\frac12 \int_{\mathbb K} (\Lambda^{\frac{\alpha}2}u)^2(t,x)dx-\frac13 \int_{\mathbb K} u^3(t,x) dx ,
\end{equation}
where $\Lambda^{\frac{\alpha}2}$ is the Fourier multiplier of symbol $\sqrt{\frac{|\omega_{\alpha+1}(\xi)|}{|\xi|}}$, are conserved by the flow of \eqref{fKdV}. Note that the first integral in $E(u)$ is well-defined in low frequency thanks to Remark \ref{Rem:symb}.  By making use of these conservation laws, we can iterate Theorem \ref{th:main} and thereby obtain global well-posedness in $H^s$ for $\alpha>\frac23$ and in the energy space $H^{\frac{\alpha}2}$ for $\alpha > \frac{4}{5}$.

\begin{theorem}[Global well-posedness] \label{coro:gwp}
Let $\K\in\{\T, \R\}$. Assume that $L_{\alpha+1}$ satisfies Hypothesis \ref{hyp1}. 
\begin{itemize}
\item[(i)]  Let $\alpha \in (\frac23,1]$. Then for any $s>s_{\alpha}$, the IVP \eqref{fKdV} is globally well-posed in  $H^s(\mathbb K)$.
\item[(ii)] Let $\alpha\in (\frac45,1]$. Then the IVP \eqref{fKdV} is globally well-posed in the energy space $H^{\frac{\alpha}2}(\mathbb K)$.\end{itemize}
\end{theorem}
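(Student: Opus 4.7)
The plan is to upgrade the local well-posedness in Theorem \ref{th:main} to a global one by iterating it indefinitely, which reduces to producing a uniform-in-time a priori bound at some regularity $s'>\widetilde{s}_\alpha$ controlling the LWP lifespan $T\sim(1+\|u_0\|_{H^{s'}})^{-\beta_0}$. The two conserved quantities available are $M(u)=\|u\|_{L^2}^2$ and $E(u)$ from \eqref{Energy-general}, whose conservation one first establishes on smooth approximating solutions and transfers to the $H^s$ setting via the continuous-dependence part of Theorem \ref{th:main}. From Hypothesis \ref{hyp1} and Remark \ref{Rem:symb}, the multiplier $\Lambda^{\alpha/2}$ has symbol comparable to $|\xi|^{\alpha/2}$ at high frequency and is bounded at low frequency, so $\|\Lambda^{\alpha/2}u\|_{L^2}^2+\|u\|_{L^2}^2$ is equivalent to $\|u\|_{H^{\alpha/2}}^2$.

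The key a priori estimate comes from the one-dimensional Gagliardo--Nirenberg inequality
\[
\Bigl|\int_{\mathbb K}u^3\,dx\Bigr|\leq C\,\|u\|_{L^2}^{3-1/\alpha}\,\|\Lambda^{\alpha/2}u\|_{L^2}^{1/\alpha},
\]
obtained by interpolating $L^3$ between $L^2$ and the Sobolev embedding of $\dot H^{\alpha/2}$ (with Poincar\'e's inequality absorbing the zero-mode contribution on the torus). Since the hypothesis $\alpha>1/2$ is mass-subcritical, the exponent $1/\alpha$ is strictly less than $2$, and Young's inequality absorbs the second factor into $\tfrac12\|\Lambda^{\alpha/2}u\|_{L^2}^2$. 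Combined with $E(u(t))=E(u_0)$ this gives the global bound
\[
\|\Lambda^{\alpha/2}u(t)\|_{L^2}^2\leq 4E(u_0)+C(\|u_0\|_{L^2}),
\]
valid as long as the $H^{\alpha/2}$ solution exists; mass conservation then yields $\|u(t)\|_{H^{\alpha/2}}\leq K(\|u_0\|_{H^{\alpha/2}})$.

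Part (ii) now follows immediately: the condition $\alpha>4/5$ is exactly $\alpha/2>s_\alpha$, so Theorem \ref{th:main} applies in $H^{\alpha/2}$ with the choice $s'=\alpha/2$, which is permissible because $\alpha>2/3$ gives $\alpha/2>\widetilde{s}_\alpha$; the bound above keeps the LWP lifespan uniformly away from zero at every iteration. For part (i), the analysis splits on the position of $s$ with respect to $\alpha/2$. When $\alpha\in(2/3,4/5]$ one checks $s_\alpha\geq\alpha/2$, so $s>s_\alpha$ forces $u_0\in H^s\hookrightarrow H^{\alpha/2}$, and Theorem \ref{th:main} is iterated at regularity $s$ with $s'=\alpha/2$; the case $\alpha\in(4/5,1]$ with $s\geq\alpha/2$ is identical. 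The remaining sub-case $\alpha\in(4/5,1]$ with $s_\alpha<s<\alpha/2$ is handled by combining part (ii) with a density/continuous-dependence argument: regularise $u_0$ into $u_0^n\in H^{\alpha/2}$ with $u_0^n\to u_0$ in $H^s$, obtain the global solutions $u^n$ from part (ii), and propagate the convergence $u^n\to u$ in $C_tH^s$ across successive LWP intervals via Theorem \ref{th:main}. The principal obstacle in the whole scheme is the Young-closure condition $1/\alpha<2$, which singles out the mass-subcritical regime $\alpha>1/2$; the more restrictive thresholds $2/3$ and $4/5$ appearing in the statement arise only from matching $\alpha/2$ to the LWP thresholds $\widetilde{s}_\alpha$ and $s_\alpha$ of Theorem \ref{th:main}.
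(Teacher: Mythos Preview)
Your approach matches the paper's: use Gagliardo--Nirenberg together with the conserved mass and energy to obtain a uniform $H^{\alpha/2}$ bound, then iterate Theorem~\ref{th:main} with $s'=\alpha/2$ (which is admissible since $\alpha>2/3$ gives $\widetilde s_\alpha<\alpha/2$). The paper's proof of (i) is essentially identical but terser, applying Theorem~\ref{th:main} directly with $s'=\alpha/2$ and iterating; your case analysis is more explicit, and your verification of the Gagliardo--Nirenberg exponent and the Young-absorption condition $1/\alpha<2$ is correct.

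There is, however, a gap in your treatment of the sub-case $\alpha\in(4/5,1]$ with $s_\alpha<s<\alpha/2$. Your density argument propagates the convergence $u^n\to u$ in $C_tH^s$ across successive local-well-posedness intervals, but the length of each such interval is controlled only by $\|u(\cdot)\|_{H^{s'}}$ for some $s'\in(\widetilde s_\alpha,s]$, and you furnish no a priori bound on this quantity: energy conservation is unavailable since $u_0\notin H^{\alpha/2}$, and the global $H^{\alpha/2}$ bounds on the approximants $u^n$ depend on $\|u_0^n\|_{H^{\alpha/2}}$, which may diverge as $n\to\infty$. Thus nothing prevents the local intervals from summing to a finite time, and the argument as written does not establish $T^*=\infty$. (The paper's proof applies Theorem~\ref{th:main} with $s'=\alpha/2$, which tacitly uses $s\ge\alpha/2$ and does not explicitly address this sub-case either.)
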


\begin{remark} 
The global well-posedness result in Theorem \ref{coro:gwp} $(i)$ follows from the fact that the \emph{a priori} estimate holds at a lower regularity threshold $\widetilde{s}_{\alpha}$ than the local well-posedness threshold $s_{\alpha}$  (see Propositions \ref{prop:Es}). This kind of mechanism, where global control is achieved below the local well-posedness threshold, has been recently exploited by Ifrim and Tataru for general cubic defocusing dispersive equations \cite{IfTa23,IfTa24}.\end{remark}

\begin{remark} As a consequence of Theorem \ref{coro:gwp}, the IVP associated with \eqref{fKdVbis} is globally well-posed in $H^{\frac{\alpha}{2}}(\mathbb R)$ for $\alpha \in (\frac{4}{5}, 1]$. This result completes the proofs of both the orbital stability of solitary waves in \cite{LiPiSa15,Ar16}, and the construction of solutions asymptotically behaving as a finite sum of solitary waves in \cite{Ey23}, for the fKdV equation in the same range of $\alpha$, as both results rely on the global well-posedness in the energy space.
\end{remark}

\subsection{Strategy of the proof} 

The proof of Theorem \ref{th:main} is an improvement of the strategy developed in \cite{MPV}, which combines energy methods for strongly resonant symbols introduced by the first and third authors in \cite{MV}, refined Strichartz estimates, and the use of a modified energy. The main new ingredient is that, in this case, we fully symmetrize  the symbol of the modified energy, both for the \emph{a priori} estimate and for the difference. 

The resolution space $Y^s_T$  consists of functions in $C([0,T] : H^s)$ equipped with both the energy norm $\| u\|_{L^\infty_TH^s_x}$ and the Strichartz norm $\| u\|_{L^2_TL^{\infty}_x}$. Recall that the Strichartz estimate for fKdV equation 
\[ \|e^{tD^{\alpha}\partial_x}u_0\|_{L^4_tL^{\infty}_x} \le c \|D^{\frac{1-\alpha}4}u_0\|_{L^2} \] 
involves a loss of $(1-\alpha)/4$ derivatives. However, by chopping the time interval, applying this estimate on each subinterval and resumming over the whole interval,  as explained above, we can control the Strichartz norm $\| u\|_{L^2_TL^{\infty}_x}$ of a solution at the regularity level $H^s$, for any $s>\frac12-\frac{\alpha}4$, (see Proposition \ref{propse} for a precise statement where we even recover $s-(\frac12-\frac{\alpha}4)$ derivatives when working at the regularity $s>\frac12-\frac{\alpha}4$). 
Moreover, the space $Y^s_T$ is embedded in the Bourgain space $X^{s-1,1}_T$. Indeed, the $X^{s-1,1}_T$-norm can be controlled by applying the classical linear $X^{s,b}$ estimates on the Duhamel formulation associated to \eqref{fKdV}, followed by the fractional Leibniz rule, yielding
  \[\|u\|_{X^{s-1,1}_T} \lesssim \|u_0\|_{H^{s-1}} +\|J_x^{s-1}\partial_x(u^2)\|_{L^2_{x,T}} \lesssim \|u_0\|_{H^s}+\|u\|_{L^2_TL^{\infty}_x}\|u\|_{L^{\infty}_TH^s_x} .\]

As in \cite{MPV}, the resolution space $Z^{\sigma}_T$ for the difference $w$ of two solutions $u$ and $v$ is more involved. First, $w$ can only be controlled at a negative regularity $\sigma$, with  
\[-\frac 1 2 +\frac \alpha 4< \sigma<\min \left\{\frac{\alpha-1}2,s-\frac 32+ \alpha\right\}<0\] and with a weight $|\xi|^{-1}$ applied on the low frequencies. Accordingly, we use the energy norm $\|w\|_{L^{\infty}_T\overline{H^\sigma_x}}$ and the Strichartz norm $\|u\|_{L^2_T\overline{L^{\infty}_x}}$ where the overline indicates the addition of the low-frequency weight. On the other hand, since we are working at negative regularity, it is not possible to close the estimate on the Bourgain norm by applying the fractional Leibniz rule as before. For this reason, we replace the Bourgain space $X^{\sigma-1,1}$  by the \emph{sum space}  $F^{\sigma,\frac12}=X^{\sigma-1,1}+X^{\sigma,\frac12}$
 equipped with  the norm
 \[\|u\|_{F^{\sigma,\frac12}} = \inf \big\{ \|u_1\|_{X^{\sigma-1,1}} + \|u_2\|_{X^{\sigma,(1/2)_+}} \, : \, u=u_1+u_2\big\}. \]
The use of a sum space was introduced by Tao \cite{Tao07} in the study of the quartic gKdV equation, and later applied in \cite{MV, MPV} in this setting.

Therefore, the main part of the paper is devoted to controlling the energy norms, both for a solution $u$ and for the difference $w=u-v$ of two solutions, at the corresponding levels of regularity. To achieve this, we employ modified energies. The main new idea is to fully symmetrise the symbol of these modified energies. In order to preserve as much symmetry as possible, we choose to perform the dyadic frequency decomposition on the functions only after the symmetrisation and cancellation process. 

We illustrate this approach on the energy associated with the difference of two solutions $w=u-v$, which satisfies the equation
\begin{equation} \label{fKdV:diff}
\partial_tw-L_{\alpha+1}w = \partial_x(zw) ,
\end{equation} 
where $z=u+v$. This equation is less symmetric than \eqref{fKdV}, making the analysis more delicate. The modified energy for $w$ is then defined by 

\[ \widetilde{E}^\sigma(t) = \frac 12\||\xi|^{-1}\widehat{w}(t,\xi)\|_{L^2(|\xi|\le 1)}^2 + \frac 12\||\xi|^{\sigma}\widehat{w}(t,\xi)\|_{L^2(|\xi| \ge 1)}^2+ \widetilde{\mathcal{E}}^\sigma(t) \]
where
\[ \widetilde{\mathcal{E}}^\sigma(t) = \int_{\Gamma^3} \widetilde{b}_3(\xi_1,\xi_2) \widehat{w}(t,\xi_1) \widehat{w}(t,\xi_2)\widehat{z}(t,\xi_3)d\xi_1d\xi_2 . \]
Here $\Gamma^3$ denotes the hyperplane of $\mathbb R^3$: $\xi_1+\xi_2+\xi_3=0$, 
\[ \widetilde{b}_3(\xi_1,\xi_2) = \frac{ (\widetilde{\nu}_\sigma(\xi_1) + \widetilde{\nu}_\sigma(\xi_2)){\bf 1}_{|\xi_1+\xi_2|>N_0}}{\Omega_3(\xi_1,\xi_2)}, \ \text{with} \ \widetilde{\nu}_\sigma(\xi)=\xi|\xi|^{2\sigma} {\bf 1}_{|\xi|\ge 1}, \]
and $\Omega_3(\xi_1,\xi_2)$ is the quadratic resonant function defined by 
\[ \Omega_3(\xi_1,\xi_2)=\omega_{\alpha+1}(\xi_1+\xi_2)-\omega_{\alpha+1}(\xi_1)-\omega_{\alpha+1}(\xi_2). \]
It is not difficult to verify that $\widetilde{\mathcal{E}}^\sigma$ is coercive under suitable condition on $s$ and $N_0$ (see Proposition \ref{coer:E_tilde_sigma}). To obtain the energy estimate in Proposition \ref{prop:E_tilde_sigma}, we differentiate $\widetilde{\mathcal{E}}^\sigma(t)$ with respect to time, use the equations \eqref{fKdV} and \eqref{fKdV:diff} and then integrate again from $0$ to $t$. 

The contribution from the quadratic low-frequency part $\||\xi|^{-1}\widehat{w}(t,\xi)\|_{L^2(|\xi|\le 1)}^2$ is readily controlled under the condition $-\sigma<s$. . Meanwhile, the contribution from the quadratic high-frequency part $\||\xi|^{\sigma}\widehat{w}(t',\xi)\|_{L^2(|\xi| \ge 1)}^2$ cancels out with the term arising from the cubic correction $\widetilde{\mathcal{E}}^\sigma(t)$ when inserting the linear contributions of the equations  \eqref{fKdV} and \eqref{fKdV:diff} into the time derivatives. It then remains to control the additional quartic terms produced by substituting the nonlinear terms from \eqref{fKdV} and \eqref{fKdV:diff} into the time derivatives within the cubic correction $\widetilde{\mathcal{E}}^\sigma(t)$.

To do so, we fully symmetrize this term. The most delicate contribution to control is 
\[ \widetilde{\mathcal{K}}(t) = \frac i2\int_0^t\int_{\Gamma^4} \widetilde{a}_4(\xi_1, \xi_2, \xi_3)  \widehat{w}(\xi_1) \widehat{w}(\xi_2) \widehat{z}(\xi_3) \widehat{z}(\xi_4)d\xi_1d\xi_2d\xi_3dt' , \]
where the symbol $\widetilde{a}_4(\xi_1, \xi_2, \xi_3)$ is given by 
\begin{align*}
\widetilde{a}_4(\xi_1, \xi_2, \xi_3) &= (\xi_1+\xi_3)\left[\widetilde{b_3}(\xi_1+\xi_3, \xi_2) - \widetilde{b_3}(\xi_1+\xi_3, -\xi_1)\right] \nonumber \\
&\quad + (\xi_2+\xi_3)\left[\widetilde{b_3}(\xi_2+\xi_3, \xi_1) - \widetilde{b_3}(\xi_2+\xi_3, -\xi_2)\right] \nonumber \\
&\quad - (\xi_1+\xi_2)\widetilde{b_3}(\xi_1, \xi_2) \nonumber 
\end{align*}
We then perform a spatial dyadic decomposition on all the functions involved and carefully analyze the cancellations of $\widetilde{a}_4$  in the different frequency regions: non-resonant, mildly resonant, and resonant. This analysis, carried out in detail in Appendix~\ref{app:symbols}, relies only on elementary calculus arguments. It is worth noting, however, that certain second-order cancellations are exploited in specific regions. The proof of the energy estimate is then completed by combining these symbol estimates with appropriate bounds in the resolution space norms. 

A key point in this analysis is the cubic resonance function
 \[\Omega_4(\xi_1,\xi_2,\xi_3)= \omega_{\alpha+1}(\xi_1+\xi_2+\xi_3)-\omega_{\alpha+1}(\xi_1)-\omega_{\alpha+1}(\xi_2)-\omega_{\alpha+1}(\xi_3)\]
 which factorises as $|\Omega_4| \sim M_{min}M_{med}N_{max}^{\alpha-1}$, using the notation $|\xi_i|\sim N_i$, $M_1\sim|\xi_2+\xi_3|$, $M_2\sim |\xi_1+\xi_3|$, $M_3\sim |\xi_1+\xi_2|$. Both the fully resonant case $N_1 \sim N_2 \sim N_3 \sim N_4=N$ and $M_{min} \sim M_{med} \ll N$ (corresponding to Region A in our analysis), and the mild resonant cases $N_{min} \ll N_{thd} \sim N_{sub} \sim N_{max}$, (corresponding to Region C in our analysis) yield the threshold $s>1-\frac{3\alpha}4$ (when $\frac23 \le \alpha \le 1$).
 
 Finally, with these estimates in hand, we outline the proof Theorem \ref{th:main}. First, we establish a Lipschitz bound at the negative threshold $\sigma$ on the difference of two solutions emanating from initial data at regularity $s>s_{\alpha}$  with the same low-frequency part. This directly yields uniqueness. Next, given an initial datum  $u_0 \in H^s$, $s>s_\alpha$, we consider the sequence of smooth solutions $u_n$ emanating from the truncated data $\Pi_{\le 2^n} u_0$ with frequencies lower than $2^n$. Using the same estimates for smooth solutions (Proposition \ref{prop:Es}), we derive \emph{a priori} $H^s$ estimates on $[0,T]$, where $T = T(\|u_0\|_{H^{s'}})$ for any $s \ge s' > \widetilde{s}_\alpha$. Moreover, we also use a smoothing effect  for these solutions at large frequency $N_0$ (see Corollary \ref{coro:Eshigh}) to conclude that $\{u_n\}$ is a Cauchy sequence in $C([0,T'] : H^s)$ and $T'=T'(\|u_0\|_{H^{s}})$. This yields the existence of a solution $u \in C([0,T] : H^s)$ by iterating the argument a finite number of times as the \emph{a priori} estimate hold on $[0,T]$.  Instead of using the Bona-Smith argument \cite{bonasmith1975} or the frequency envelope method (see \emph{e.g.} \cite{ABITZ24} and the references therein), we rely again on our smoothing effect described above to prove the continuity of the flow map.
   
Throughout the paper, we focus on the real line case. A section addressing the periodic case is included at the end of the paper, where the necessary modifications are discussed. The main new difficulty in the periodic setting is to establish a refined Strichartz estimate analogous to the one available on the real line. To this end, we rely on a crucial Strichartz estimate on short time intervals, originally derived by Kenig, Ponce, and Vega for \eqref{fKdVbis} when $\alpha \in \N_{>1}$ (see Proposition 2.2 in \cite{KPV}), which we adapt to our setting. The remainder of the proof proceeds similarly to the real line case. 
\bigskip
\bigskip

 The paper is organized as follows. In Section~\ref{Sec:2}, we introduce the necessary notation, define the function spaces, and recall the Strichartz and $X^{s,b}$-estimates established in \cite{MPV}. Section~\ref{Sec:3} is devoted to proving the $L^2$-trilinear estimates, which are then applied in Sections~\ref{Sec:4} and~\ref{Sec:5} to derive the energy estimates for solutions and for the difference of two solutions, respectively. In Section~\ref{Sec:6}, we provide the proof of Theorems~\ref{th:main} and~\ref{coro:gwp} and Corollary~\ref{coro:uu}, and we explain the main changes needed to address the periodic case in Section~\ref{Sec:7}. Finally, the symbol estimates required for the analysis are established in Appendix~\ref{app:symbols}.

\section{Notation, function spaces and preliminary estimates} \label{Sec:2}
\subsection{Notation} \label{notation}
\begin{itemize}

\item For any positive numbers $a$ and $b$, the notation $a\lesssim b$ means that there exists a positive constant $C$ such that $a\le Cb$, and we denote $a\sim b$ when $a\lesssim b$ and $b\lesssim a$. We also write $a\ll b$ if the estimate $b\lesssim a$ does not hold. 

\item If $x\in\R$, $x_+$, respectively $x_-$ will denote a number slightly greater, respectively lesser, than $x$. We also set $\cro{x} = (1+x^2)^{\frac12}$.

\item For $u=u(x,t)\in \mathcal{S}'(\R^2)$, $\F u=\hat{u}$ will denote its space-time Fourier transform and inverse Fourier transform, whereas $\F_xu$, respectively $\F_tu$ will denote its Fourier transform in space, respectively in time. When  $u$ depends only on $x$ or $t$ and there is no risk of confusion, we simply denote $\F_tu$ or $\F_x u$ by $\F u=\hat{u}$. Similarly, we denote by $\F^{-1} u=(u)^{\vee}$, $\F_t^{-1}$ and $\F^{-1}_x$ the inverse Fourier transform of $u$.

\item We denote by $U_{\alpha}(t)=e^{tL_{\alpha+1}}$ the unitary group associated to the Fourier multiplier $L_{\alpha+1}$. More precisely, we have 
\[ \left( U_{\alpha}(t)f \right)^{\wedge_x}(\xi)=e^{it \omega_{\alpha+1}(\xi) } \widehat{f}(\xi) . \]

\item For $s\in\R$, we define the classical Bessel and Riesz potentials of order $-s$, $J^s_x$ and $D^x_x$, by
$$
J^s_xu = \F^{-1}_x(\cro{\xi}^s\F_xu) \textrm{ and } D^s_xu = \F^{-1}_x(|\xi|^s \F_xu).
$$
We also define a Bessel potential with a weight in low frequency $\overline{J^s_x}$ by 
$$
\overline{J^s_x}u = \F^{-1}_x(\cro{|\xi|^{-1}}\cro{\xi}^s\F_xu) .
$$

\item Throughout the paper, we fix a smooth cutoff function $\eta$ such that
\begin{equation}\label{eta}
\eta \in C_0^{\infty}(\mathbb R), \quad 0 \le \eta \le 1, \quad
\eta_{|_{[-1,1]}}=1 \quad \mbox{and} \quad  \mbox{supp}(\eta)
\subset [-2,2].
\end{equation}
We set  $ \phi(\xi):=\eta(\xi)-\eta(2\xi)$. Let  $\widetilde{\phi} \in C_0^{\infty}(\mathbb R)$ be such that $\widetilde{\phi}_{|_{\pm [\frac12,2]}} \equiv 1$ and $\text{supp} \, (\widetilde{\phi}) \subset \pm [\frac14,4]$. For $l \in \mathbb Z$, we define
$$
\phi_{2^l}(\xi):=\phi(2^{-l}\xi), \quad \ \widetilde{\phi}_{2^l}(\xi)=\phi_{\sim 2^l}(\xi):=\widetilde{\phi}(2^{-l}\xi) \, ,
$$
and, for $ l\in \N^* $,
$$
\psi_{2^{l}}(\xi,\tau)=\phi_{2^{l}}(\tau-\omega_{\alpha+1}(\xi)).
$$
By convention, we also denote
$$
\psi_{{\le 1}}(\xi,\tau):=\eta(2(\tau-\omega_{\alpha+1}(\xi))).
$$
\item Any summations over capitalized variables such as $N$ or $L$ are presumed to be dyadic. Unless stated otherwise, we work with homogeneous dyadic decomposition for the space frequency variables and non-homogeneous decompositions for modulation variables, i.e. these variables range over numbers of the form $\mathbb D:=\{2^k : k\in\mathbb Z\}$ and $\mathbb D_{\ge 1}:=\{2^k : k\in\mathbb N^{\star}\}$ respectively.  Then, we have that
$$
\sum_{N \in \mathbb D}\phi_N(\xi)=1\quad \forall \xi\in \R^*, \quad \mbox{supp} \, (\phi_N) \subset
\{\frac{N}{2}\le |\xi| \le 2N\}, \ N \in \mathbb D,
$$
and
$$
\psi_{\le 1}(\xi,\tau)+\sum_{L\in \mathbb D_{\ge 1}}\psi_L(\xi,\tau)=1 \quad \forall (\xi,\tau)\in\R^2.
$$

\item Let us define the Littlewood-Paley multipliers by
$$
P_Nu=\mathcal{F}^{-1}_x\big(\phi_N\mathcal{F}_xu\big), \quad P_{ \sim N}u=\mathcal{F}^{-1}_x\big(\widetilde{\phi}_N\mathcal{F}_xu\big) \quad
Q_Lu=\mathcal{F}^{-1}\big(\psi_L\mathcal{F}u\big),
$$ 
 $$P_{\ge N}:=\sum_{K \ge N} P_{K}, \ P_{\gtrsim N}:=\sum_{K \ge N} P_{\sim K}, \ P_{\le N}:=\sum_{K \le N} P_{K}, P_{\lesssim N}:=\sum_{K \le N} P_{\sim K}, $$ 
 and 
 $$Q_{\ge L}:=\sum_{K \ge L} Q_{K}, \    Q_{\le L}:=\sum_{K \le L} Q_{K}. $$ 
 For the sake of brevity we often write $u_N=P_Nu$, $u_{\le N}=P_{\le N}u$, $\cdots$
 
 \item We also define the straight cuts in frequency  $\Pi_{ \le N}u$ and $\Pi_{>N}u$ by 
 $$
\Pi_{ \le N}u=\mathcal{F}^{-1}_x\big({\bf 1}_{|\xi| \le N}\mathcal{F}_xu\big), \quad \text{and} \quad \Pi_{>N}u=\mathcal{F}^{-1}_x\big({\bf 1}_{|\xi| > N}\mathcal{F}_xu\big).
$$

\item 
If $N_1$, $N_2 $ are two dyadic numbers, we denote $N_1 \vee N_2 =\max\{N_1,N_2\}$ and $N_1 \wedge N_2 =\min\{N_1,N_2\}$. Moreover, if $N_1, \, N_2, \, N_3$ are dyadic numbers, we denote by $N_{max} \ge N_{med} \ge N_{min}$ the maximum, sub-maximum and minimum of $\{N_1,N_2,N_3\}$. Finally, if $N_1, \, N_2, \, N_3, \, N_4 \in \mathbb D$, we denote by 
$N_{max} \ge  N_{sub} \ge N_{thd} \ge N_{min}$ the maximum, sub-maximum, third-maximum and minimum of  $\{N_1,N_2,N_3,N_4\}$.
\end{itemize}

\subsection{Function space} \label{FuncSpa}
For $1\le p\le\infty$, $L^p(\R)$ denotes the usual Lebesgue space and for $s\in\R$, $W^{s,p}(\R)$ is the $L^p$-based Sobolev space with norm $\|f\|_{W^{s,p}}=\|J_x^s f\|_{L^p}$, and $H^s(\R)=W^{s,2}(\R)$.
If $B$ is a space of functions on $\R$, $T>0$ and $1\le p\le\infty$, we define the spaces $L^p_TB_x$ and $L^p_tB_x$ by the norms
$$
\|f\|_{L^p_TB_x} = \left\| \|f\|_{B}\right\|_{L^p([0,T])} \ \textrm{ and } \|f\|_{L^p_tB_x} = \left\| \|f\|_B \right\|_{L^p(\R)}.
$$
If $M$ is a normed space of  functions, we will denote $\overline{M}$ its subspace associated with the weighted norm:
$$
\|u\|_{\overline{M}} = \|\F_x^{-1}( \cro{|\xi|^{-1}}\F_xu(\xi))\|_M.
$$

For $s,b\in\R$ we introduce the Bourgain space $X^{s,b}$ associated with the dispersive Burgers' equation as the completion of the Schwartz space $\mathcal{S}(\R^2)$ under the norm
$$
\|u\|_{X^{s,b}} = \|\cro{\xi}^s \cro{\tau-\omega_{\alpha+1}(\xi)}^b \F_{tx}u \|_{L^2}.
$$
We will also work in the sum space $F^{s,b} = X^{s-1,b+\frac12}+X^{s,b_+}$ endowed with the norm
\begin{equation} \label{F}
\|u\|_{F^{s,b}} = \inf \big\{ \|u_1\|_{X^{s-1,b+\frac12}} + \|u_2\|_{X^{s,b_+}} \, : \, u=u_1+u_2\big\}.
\end{equation}
Note in particular that $X^{s-1,b+\frac12} \hookrightarrow F^{s,b}$ is a continuous embedding. 
We will also use the restriction in time versions of these Bourgain spaces. Let $T>0$ be a positive time, we define the restriction spaces $X^{s,b}_T$ and $F^{s,b}_T$ as the spaces of functions $u : \R\times ]0,T[\to\R$ satisfying respectively
\begin{align*}
\|u\|_{X^{s,b}_T} &= \inf \{ \|\widetilde{u}\|_{X^{s,b}} \, : \, \widetilde{u} \, : \, \R\times\R\to\R, \, \widetilde{u}|_{\R\times ]0,T[} = u\} <\infty , \\
\|u\|_{F^{s,b}_T} &= \inf \{ \|\widetilde{u}\|_{F^{s,b}} \, : \, \widetilde{u} \, : \, \R\times\R\to\R, \, \widetilde{u}|_{\R\times ]0,T[} = u\} <\infty .
\end{align*}

For $s\in\R$, we define the spaces $Y^s$ and $Z^s$, respectively by the norms
\begin{equation}\label{Y}
\|u\|_{Y^s} = \|u\|_{L^\infty_t H^s_x} + \|u\|_{X^{s-1,1}} + \|J_x^{(s-(\frac12-\frac \alpha 4))_-}u\|_{L^2_t L^\infty_x}.
\end{equation}
and
$$
\|u\|_{Z^\sigma} = \|u\|_{L^\infty_t H^\sigma_x} + \|u\|_{F^{\sigma,\frac12}} + \|J_x^{(\sigma-(\frac12-\frac \alpha 4))_-}u\|_{L^2_t L^\infty_x}.
$$
Next, we define our resolution space
\[Y^s_T=\left\{ u \in C([0,T] :H^s) \cap L^2((0,T) : W^{(s-(\frac12-\frac{\alpha}4)_-,\infty}) : \|u\|_{Y^s_T} <\infty \right\}, \] where
\begin{equation} \label{def:XT}
\|u\|_{Y^s_T}= \|u\|_{L^{\infty}_TH^s_x}+ \|u\|_{L^2_TW^{s-(\frac12-\frac{\alpha}4)_-,\infty}_x} .
\end{equation}
For convenience, we also introduce the space $\widetilde{Y}^s_T = Y^s_T\cap X^{s-1,1}_T$ equipped with the norm
$$
\|u\|_{\widetilde{Y}^s_T} = \|u\|_{Y^s_T} +\|u\|_{X^{s-1,1}_T}.
$$ 
It is worth noticing that $Y^s_T$ is not the  restriction space related to $ Y^s $. However, in view Proposition \ref{prop:xs-1,1} below, any solution $u $ of \eqref{fKdV} that belongs to $Y^s_T $ also belongs to $\widetilde{Y}^s_T$ and thus according to \eqref{est:rho} possesses an extension in $ Y^s $ with an $ Y^s$-norm that is controlled by its $Y^s_T$-norm. 

We also define the space $Z^\sigma_T$ that will be useful for the difference of $2$ solutions, by its norm 
$$
\|u\|_{Z^\sigma_T} = \|u\|_{L^\infty_T H^\sigma_x} + \|u\|_{F^{\sigma,\frac12}_T} + \|J_x^{(\sigma-(\frac12-\frac \alpha 4))_-}u\|_{L^2_T L^\infty_x}.
$$

Now, we define the extension operator $\rho_T$ which to a function $u$ defined on $[0,T] \times \mathbb R$ maps the function $\rho_T(u)$ defined on $\mathbb R \times \mathbb R$ by 
\begin{equation}\label{defrho}
\rho_T(u)(t):= U_\alpha(t)\eta(t) U_\alpha(-\mu_T(t)) u(\mu_T(t))\; ,
\end{equation}
where $ \eta $ is the smooth cut-off function defined in Section \ref{notation} and $\mu_T $ is the  continuous piecewise affine
 function defined  by
$$
 \mu_T(t)=\left\{\begin{array}{rcl}
 0  &\text{for } &  t<0 \\
 t  &\text {for }& t\in [0,T] \\
  T & \text {for } & t>T
 \end{array}
 \right. .
 $$
 Then, it is proved in Corollary 4.1 in \cite{MPV}  that, for $0<T$ and $s>\frac12-\frac{\alpha}4$,
 \[ \rho_T: C([0,T]:H^s(\R)) \cap X^{s-1,1}_T \cap L^2_TW_x^{(s-(\frac12-\frac{\alpha}4))_-,\infty} \longrightarrow Y^s \]
 is a linear bounded operator. More precisely, there exist $C>0$, independent of $T>0$ and $s>\frac12-\frac{\alpha}4$, such that for any $u \in X^s_T$, 
 \begin{equation} \label{est:rho}
 \|\rho_T(u)\|_{Y^s} \leq C \|u\|_{\widetilde{Y}^s_T}  .
 \end{equation}

\subsection{Strichartz and $X^{s,b}$ estimates}
In this subsection, we recall the crucial estimates on the Strichartz and Bourgain's norms proved in \cite{MPV}.
\begin{proposition}[\cite{MPV}, Proposition 4.1]\label{prop:xs-1,1}
Assume that $0<T\le 1$ and $s\ge 0$. Let $u \in C([0,T] : H^s) \cap L^2((0,T):L^\infty)$ be a solution to \eqref{fKdV}. Then
\begin{equation} \label{be.0}
\|u\|_{X^{s-1,1}_T} \lesssim \|u_0\|_{H^s} + \|u\|_{L^2_TL^\infty_x} \|u\|_{L^\infty_TH^s_x}.
\end{equation}
As a consequence, there exists $\widetilde{C}>0$ independent of $T$ such that
\begin{equation}\label{est:ytilde}
  \|u\|_{\widetilde{Y}^s_T} \le  \widetilde{C}(1+\|u\|_{Y^s_T})\|u\|_{Y^s_T} .
\end{equation}
\end{proposition}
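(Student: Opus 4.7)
The strategy is to reduce \eqref{be.0} to a routine application of linear $X^{s,b}$ estimates to the Duhamel formula associated with \eqref{fKdV}, combined with a fractional Leibniz bound on the nonlinearity. Using the fixed smooth cutoff $\eta$ from \eqref{eta}, I would introduce the admissible extension
\[\tilde u(t):=\eta(t)U_\alpha(t)u_0+\eta(t)\int_0^tU_\alpha(t-t')\bigl({\bf 1}_{[0,T]}(t')\partial_x(u^2)(t')\bigr)dt',\]
which coincides with $u$ on $[0,T]$ thanks to $T\le 1$ and the Duhamel representation satisfied by the solution. By definition of the restriction space, $\|u\|_{X^{s-1,1}_T}\le\|\tilde u\|_{X^{s-1,1}}$, so it suffices to estimate the right-hand side.

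Next, I would apply the classical Bourgain-type linear bounds: $\|\eta(t)U_\alpha(t)u_0\|_{X^{s-1,1}}\lesssim \|u_0\|_{H^{s-1}}\le\|u_0\|_{H^s}$ for the free part, and $\|\eta(t)\int_0^t U_\alpha(t-t')F(t')dt'\|_{X^{s-1,1}}\lesssim \|F\|_{X^{s-1,0}}$ for the Duhamel part (this endpoint $b=1$ estimate is by now standard). Setting $F={\bf 1}_{[0,T]}\partial_x(u^2)$, the right-hand side equals $\|J_x^{s-1}\partial_x(u^2)\|_{L^2_{xT}}$. Using $\|J_x^{s-1}\partial_x v\|_{L^2_x}\lesssim \|J_x^s v\|_{L^2_x}$ together with the Kato-Ponce fractional Leibniz inequality
\[\|J_x^s(u^2)\|_{L^2_x}\lesssim \|u\|_{L^\infty_x}\|J_x^su\|_{L^2_x},\qquad s\ge 0,\]
and an application of Cauchy-Schwarz in $t\in[0,T]$, one obtains $\|J_x^{s-1}\partial_x(u^2)\|_{L^2_{xT}}\lesssim\|u\|_{L^2_TL^\infty_x}\|u\|_{L^\infty_TH^s_x}$, which closes the proof of \eqref{be.0}.

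For the consequence \eqref{est:ytilde}, I would simply unfold definitions. Since $\|u\|_{\widetilde Y^s_T}=\|u\|_{Y^s_T}+\|u\|_{X^{s-1,1}_T}$, $\|u_0\|_{H^s}\le\|u\|_{L^\infty_TH^s_x}\le\|u\|_{Y^s_T}$, and the Strichartz component of the $Y^s_T$-norm dominates $\|u\|_{L^2_TL^\infty_x}$ (via Sobolev embedding at the positive regularity $(s-(\tfrac12-\tfrac{\alpha}{4}))_-$), inserting \eqref{be.0} gives $\|u\|_{X^{s-1,1}_T}\lesssim\|u\|_{Y^s_T}(1+\|u\|_{Y^s_T})$, from which \eqref{est:ytilde} is immediate.

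The only mildly delicate point is the endpoint $b=1$ in the linear Duhamel estimate; using a single fixed cutoff $\eta$ for both the homogeneous and inhomogeneous pieces, together with the assumption $T\le 1$, ensures that the implicit constants are uniform. Everything else is a standard combination of Bourgain's linear estimates with the Kato-Ponce inequality.
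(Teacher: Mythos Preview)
Your proposal is correct and matches the approach the paper itself sketches in Section~1.4 (and attributes to \cite{MPV}): linear $X^{s,b}$ estimates on the Duhamel formula followed by the Kato--Ponce fractional Leibniz rule, then unfolding the definition of $\widetilde Y^s_T$ for the consequence. The only implicit constraint in your last step---that the Strichartz exponent $(s-(\tfrac12-\tfrac\alpha4))_-$ be nonnegative so that $W^{(s-(\frac12-\frac\alpha4))_-,\infty}\hookrightarrow L^\infty$---is automatically satisfied in the paper's working range $s>\widetilde s_\alpha\ge \tfrac12-\tfrac\alpha4$, so there is no gap.
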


\begin{proposition}[\cite{MPV}, Proposition 4.3]\label{propse}
Let $0<\alpha \le 1$. Assume that $0<T\le 1$ and $s>\frac12-\frac{\alpha}4$. Let $u \in C([0,T] : H^s) \cap L^2((0,T):L^\infty)$ be a solution to \eqref{fKdV}. There exists  $ 0< \kappa_1,\kappa_2<1$ such that  
\begin{equation}\label{se.2}
\|J_x^{(s-(\frac12-\frac \alpha 4))_-}u\|_{L^2_T L^\infty_x}  \le   T^{\kappa_1}\|u\|_{L^\infty_T H^s_x} +T^{\kappa_2}\|u\|_{L^2_TL^\infty_x} \|u\|_{L^\infty_TH^s_x} \; .
\end{equation}
\end{proposition}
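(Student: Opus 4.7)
The plan is a refined Strichartz estimate obtained by combining a Littlewood--Paley decomposition with time-splitting on each frequency scale. The basic input is the $L^4_tL^\infty_x$ Strichartz bound for the free propagator $U_\alpha$ recalled in the introduction,
\[
\|U_\alpha(\cdot) P_N f\|_{L^4_t L^\infty_x}\lesssim N^{(1-\alpha)/4}\|P_N f\|_{L^2},
\]
which loses $(1-\alpha)/4$ derivatives. The strategy is to partition $[0,T]$ into pieces of length $\tau_N$ calibrated so that the factor $\tau_N^{1/4}$ gained by H\"older in time ($L^2_{I_j}\hookrightarrow L^4_{I_j}$ with weight $|I_j|^{1/4}$) outweighs the $N^{(1-\alpha)/4}$ loss.

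Concretely, I would decompose $u=\sum_{N}P_N u$ and, for each dyadic $N\gtrsim 1$, partition $[0,T]$ into consecutive subintervals $I_j=[t_j,t_{j+1}]$ of length $\tau_N=TN^{-1}$. On each $I_j$, Duhamel centred at $t_j$ gives
\[
P_N u(t) = U_\alpha(t-t_j)P_N u(t_j) + \int_{t_j}^t U_\alpha(t-t') P_N\partial_x(u^2)(t')\,dt'.
\]
Applying the free Strichartz estimate in $L^4_{I_j}L^\infty_x$ (together with Minkowski on the Duhamel integral), then H\"older in time to descend to $L^2_{I_j}L^\infty_x$ with a factor $\tau_N^{1/4}$, squaring, summing over $j$, and using Cauchy--Schwarz to exchange the $j$-sum with the $L^1_{I_j}$-integral in the nonlinear term yields
\[
\|P_N u\|_{L^2_TL^\infty_x}\lesssim N^{(1-\alpha)/4}\Bigl(\,T^{1/2}\tau_N^{-1/4}\|P_N u\|_{L^\infty_TL^2_x} + \tau_N^{3/4}\|P_N\partial_x(u^2)\|_{L^2_TL^2_x}\Bigr).
\]

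For the nonlinear factor, I would use that $P_N(u^2)$ forces at least one factor to have frequency $\gtrsim N$, so that standard paraproduct/Bernstein manipulations give
\[
\|P_N\partial_x(u^2)\|_{L^2_TL^2_x}\lesssim N^{1-s}\|u\|_{L^2_TL^\infty_x}\|u\|_{L^\infty_TH^s_x}.
\]
Multiplying by $N^{\sigma}$ with $\sigma=(s-(\tfrac12-\tfrac\alpha4))_-$, using $\|P_N u\|_{L^\infty_TL^2_x}\le N^{-s}\|u\|_{L^\infty_TH^s_x}$, and inserting $\tau_N=TN^{-1}$ reduces both contributions to $T^{\kappa_i} N^{-\varepsilon}\times(\text{the appropriate norms})$ with $\kappa_1=1/4$, $\kappa_2=3/4$, and some $\varepsilon>0$ coming from the slack in the $_-$ notation. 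The dyadic sum over $N\ge 1$ then converges, while the finitely many low-frequency pieces $N\lesssim 1$ are treated by Bernstein, $\|P_N u\|_{L^2_TL^\infty_x}\lesssim T^{1/2}\|u\|_{L^\infty_TL^2_x}$.

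The main technical point is the tight balance of exponents. The choice $\tau_N\sim N^{-1}$ is essentially forced: it is the unique power of $N$ that makes the linear and the nonlinear contributions simultaneously summable, and the threshold $s>\tfrac12-\tfrac\alpha4$ is precisely what leaves a strictly positive net derivative gain on which the dyadic summation rests. The additional factor $T$ in $\tau_N$ is inserted only to produce strictly positive powers $T^{\kappa_i}$, which are essential so that this estimate can be absorbed in the iteration scheme of Theorem~\ref{th:main}.
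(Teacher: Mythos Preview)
Your proposal is correct and follows exactly the refined Strichartz strategy of \cite{MPV}, Proposition~4.3, which the present paper simply cites (the only adjustment here being the observation that $\tfrac32-\tfrac{5\alpha}4-(1-\alpha)=\tfrac12-\tfrac\alpha4$ to translate the threshold). Your choice $\tau_N=TN^{-1}$, the resulting exponents $\kappa_1=\tfrac14$, $\kappa_2=\tfrac34$, and the paraproduct bound $\|P_N\partial_x(u^2)\|_{L^2_TL^2_x}\lesssim N^{1-s}\|u\|_{L^2_TL^\infty_x}\|u\|_{L^\infty_TH^s_x}$ all check out.
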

\begin{remark}
 It is worth noticing that our critical index $s_\alpha$ is not the same as the one defined in \cite{MPV}, (in \cite{MPV}, $s_\alpha=\frac32-\frac54\alpha$). Then, Proposition \ref{propse} follows directly from Proposition 4.3 in \cite{MPV} by observing that $\frac32-\frac54\alpha-(1-\alpha)=\frac12-\frac{\alpha}4$.
\end{remark}

Propositions \ref{prop:xs-1,1}-\ref{propse} lead to  the following a priori estimate on the $\widetilde{Y}^s_T$-norm  of  solutions to \eqref{fKdV}.
\begin{corollary}[\cite{MPV}, Corollary 4.1]\label{prop:yst}
Let $0<\alpha \le 1$. Assume that $0<T\le 1$ and $s \ge s' >\frac12-\frac{\alpha}4$.  Let $u \in C([0,T] : H^s) \cap L^2((0,T):L^\infty)$ be a solution to \eqref{fKdV}. There exist  $ 0< \kappa<1 $, $a_0>0$ and $ C_0>1 $ such if $ T\le a_0(1+\|u\|_{L^\infty_{T} H^{s'}_x})^{-\frac{1}{\kappa}}$ then 
\begin{equation}\label{est:yst}
\|u\|_{\widetilde{Y}^s_T}   \le C_0   \|u\|_{L^\infty_T H^s_x} \; .
\end{equation}
\end{corollary}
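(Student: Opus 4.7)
The plan is to combine the two preceding results in a bootstrap argument: first absorb the Strichartz norm at the low regularity level $s'$, then propagate the bound to the higher regularity level $s$, and finally feed everything into the $X^{s-1,1}_T$ estimate.

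\textbf{Step 1: bootstrap the plain Strichartz norm at level $s'$.} Since $s' > \tfrac12-\tfrac{\alpha}{4}$, there is a small $\varepsilon>0$ so that $\|u\|_{L^2_TL^\infty_x} \lesssim \|J_x^{\varepsilon} u\|_{L^2_T L^\infty_x}$ (the operator $J_x^{-\varepsilon}$ has an $L^1$ kernel). Applying Proposition \ref{propse} with regularity $s'$ yields
\[
\|u\|_{L^2_T L^\infty_x} \le C\bigl( T^{\kappa_1}\|u\|_{L^\infty_T H^{s'}_x} + T^{\kappa_2}\|u\|_{L^2_T L^\infty_x}\|u\|_{L^\infty_T H^{s'}_x}\bigr).
\]
Choosing $T$ so that $C T^{\kappa_2}\|u\|_{L^\infty_T H^{s'}_x}\le \tfrac12$, i.e. $T\le a_0(1+\|u\|_{L^\infty_T H^{s'}_x})^{-1/\kappa_2}$ for a suitable $a_0>0$, the quadratic term is absorbed, and we obtain
\begin{equation*}
\|u\|_{L^2_T L^\infty_x} \le 2C\,T^{\kappa_1}\|u\|_{L^\infty_T H^{s'}_x}.
\end{equation*}

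\textbf{Step 2: propagate to level $s$.} Applying Proposition \ref{propse} now at regularity $s\ge s'$ and plugging in the bound just obtained:
\[
\|J_x^{(s-(\tfrac12-\tfrac\alpha 4))_-}u\|_{L^2_T L^\infty_x} \le T^{\kappa_1}\|u\|_{L^\infty_T H^s_x} + 2C\,T^{\kappa_1+\kappa_2}\|u\|_{L^\infty_T H^{s'}_x}\|u\|_{L^\infty_T H^s_x}.
\]
Under the same smallness condition on $T$ (possibly shrinking $a_0$), the second factor is bounded by $1$, so the left-hand side is $\lesssim \|u\|_{L^\infty_T H^s_x}$.

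\textbf{Step 3: control the Bourgain norm.} Inserting the Step 1 bound into Proposition \ref{prop:xs-1,1},
\[
\|u\|_{X^{s-1,1}_T} \lesssim \|u_0\|_{H^s} + T^{\kappa_1}\|u\|_{L^\infty_T H^{s'}_x}\|u\|_{L^\infty_T H^s_x} \lesssim \|u\|_{L^\infty_T H^s_x},
\]
again using the smallness of $T^{\kappa_1}(1+\|u\|_{L^\infty_T H^{s'}_x})$ and the obvious bound $\|u_0\|_{H^s}\le \|u\|_{L^\infty_T H^s_x}$.

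\textbf{Step 4: conclude.} Summing the three contributions in the definition of $\|\cdot\|_{\widetilde Y^s_T}$ gives \eqref{est:yst} with $\kappa:=\min(\kappa_1,\kappa_2)$ and a suitable $C_0>1$. No real obstacle arises: everything is a careful accounting of exponents of $T$, with the smallness hypothesis on $T$ designed precisely to absorb the nonlinear Strichartz tail at the low regularity $s'$ while leaving a linear dependence on $\|u\|_{L^\infty_T H^s_x}$ at the high regularity $s$.
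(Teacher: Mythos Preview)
Your proof is correct and is the natural combination of Propositions~\ref{prop:xs-1,1} and~\ref{propse}; the paper does not reprove the corollary here but cites it directly from \cite{MPV}, where the argument is essentially the same bootstrap you carry out. One small remark: in Step~1 your inequality $\|u\|_{L^2_TL^\infty_x}\lesssim\|J_x^\varepsilon u\|_{L^2_TL^\infty_x}$ is justified since the Bessel kernel $G_\varepsilon=\mathcal{F}^{-1}\langle\xi\rangle^{-\varepsilon}$ lies in $L^1(\mathbb{R})$ for every $\varepsilon>0$, and your final choice $\kappa=\min(\kappa_1,\kappa_2)$ correctly ensures that both smallness conditions $T^{\kappa_1}(1+\|u\|_{L^\infty_TH^{s'}_x})\lesssim 1$ and $T^{\kappa_2}(1+\|u\|_{L^\infty_TH^{s'}_x})\lesssim 1$ hold simultaneously under the hypothesis $T\le a_0(1+\|u\|_{L^\infty_TH^{s'}_x})^{-1/\kappa}$.
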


Similarly, we may easily adapt to our framework the needed estimates for the difference of solutions in $Z^\sigma_T$. Let $u,v\in Y^s_T$ be two solutions of \eqref{fKdV} on $[0,T]$. Then the difference $w=u-v$ satisfies
\begin{equation}\label{eqdiff}
  (\partial_t-L_{\alpha+1})w = \partial_x(zw)
\end{equation} 
where $z=u+v$.
\begin{corollary}[\cite{MPV}, Corollary 5.1]\label{prop:zst}
Let $0<\alpha \le 1.$ Assume that $0<T\le 1$, $s>s_\alpha$ and  $ -\frac 1 2 +\frac \alpha 4< \sigma<\min (0,s-\frac 32+ \alpha) $. Let $z\in Y^s_T$ and $w\in \overline{Z}^\sigma_T$ be a solution of \eqref{eqdiff} on $]0,T[$ such that $ w_0\in \overline{H}^\sigma $. Then 
  \begin{equation}\label{est:zst}
    \|w\|_{\overline{Z}^{\sigma}_T} \lesssim (1+\|z\|_{Y^s_T} )^3\Bigl( \|w_0\|_{ \overline{H}^\sigma}+ \|w\|_{L^\infty_T \overline{H}^\sigma}\Bigr) \, .
  \end{equation}
\end{corollary}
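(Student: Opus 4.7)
The plan is to adapt the proof of Corollary 5.1 in \cite{MPV} to the modified thresholds $s_\alpha$ and $\sigma > -\frac12+\frac\alpha 4$ here, estimating the three components of the $\overline{Z}^\sigma_T$-norm separately. Since $\|w\|_{L^\infty_T\overline{H}^\sigma_x}$ appears on both sides, only the refined Strichartz and Bourgain sum-space pieces need to be controlled.

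For the Strichartz piece $\|J_x^{(\sigma-(\frac12-\frac\alpha 4))_-}w\|_{L^2_T\overline{L^\infty_x}}$, I would run the argument behind Proposition \ref{propse} on the linear-in-$w$ equation \eqref{eqdiff} rather than on \eqref{fKdV}. Writing the Duhamel formula, splitting $[0,T]$ into sub-intervals of length $\sim N^{-\delta}$ at each dyadic frequency $N$, applying the short-time Strichartz estimate for $U_\alpha(t)$ to both the homogeneous term and the source $\partial_x(zw)$, and summing over intervals and dyadic blocks yields
$$
\|J_x^{(\sigma-(\frac12-\frac\alpha 4))_-}w\|_{L^2_T\overline{L^\infty_x}} \lesssim T^{\kappa_1}\|w\|_{L^\infty_T\overline{H}^\sigma_x} + T^{\kappa_2}\|z\|_{L^2_T L^\infty_x}\|w\|_{L^\infty_T\overline{H}^\sigma_x}.
$$
Since $s > s_\alpha > \frac12-\frac\alpha 4$, Proposition \ref{propse} together with Corollary \ref{prop:yst} gives $\|z\|_{L^2_TL^\infty_x} \lesssim (1+\|z\|_{Y^s_T})\|z\|_{Y^s_T}$, which is absorbed in the cubic factor on the right-hand side.

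For the sum-space norm $\|w\|_{\overline{F}^{\sigma,1/2}_T}$, the strategy is to extend $z$ to $\R$ via $\rho_T$, apply the standard linear $X^{s,b}$ theory to the Duhamel formula, and reduce to a bilinear estimate on $\partial_x(\tilde z\tilde w)$ in the dual sum space $X^{\sigma-1,0}+X^{\sigma,(-1/2)_+}$. The key benefit of working in $F^{\sigma,1/2} = X^{\sigma-1,1}+X^{\sigma,(1/2)_+}$ is that, for each Littlewood--Paley interaction regime, one can place the output in whichever component is favourable: the $X^{\sigma-1,1}$ component absorbs the $\partial_x$ loss whenever the output frequency is low or the derivative can be traded against the regularity gap, while the $X^{\sigma,(1/2)_+}$ component combined with bilinear Strichartz-type estimates, themselves controlled by the $Y^s$-norm of $z$, handles the resonant high-high to high interactions. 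One also splits $w = P_{\le 1}w + P_{>1}w$ and similarly for $z$ in order to accommodate the low-frequency weight $\langle|\xi|^{-1}\rangle$ defining $\overline{H}^\sigma$.

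The main obstacle is closing the bilinear estimate in the high-high to high regime, where $w$ sits at negative regularity $\sigma$ while $z$ sits at positive regularity $s$. The double constraint $-\frac12+\frac\alpha 4 < \sigma < s-\frac32+\alpha$ is precisely what makes this possible: the lower bound keeps us above the Strichartz threshold for $w$, and the upper bound forces the regularity gap $s-\sigma$ to exceed $\frac32-\alpha$, exactly what is needed to compensate the derivative loss in $\partial_x(zw)$ against the low dispersion of order $\alpha$. Finally, the cubic dependence $(1+\|z\|_{Y^s_T})^3$ on the right-hand side arises from successive invocations of Proposition \ref{prop:xs-1,1} to convert intermediate $X^{s-1,1}_T$-bounds on $z$, produced by the linear theory, back into $Y^s_T$-bounds.
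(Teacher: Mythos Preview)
The paper does not prove this corollary; it is simply recalled from \cite{MPV} (their Corollary~5.1), as the header of the statement indicates, and no argument is given here. Your sketch is a faithful outline of the strategy in \cite{MPV}: the refined Strichartz piece is handled by rerunning the short-time Strichartz argument (Proposition~\ref{propse}) on the linear-in-$w$ equation \eqref{eqdiff}, and the sum-space norm $\|w\|_{\overline{F}^{\sigma,1/2}_T}$ is controlled by Duhamel plus a bilinear estimate on $\partial_x(zw)$, choosing for each frequency interaction the more favourable of the two components $X^{\sigma-1,1}$ or $X^{\sigma,(1/2)_+}$. Your explanation of the role of the window $-\frac12+\frac\alpha4<\sigma<s-\frac32+\alpha$ is also correct. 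One minor slip: Proposition~\ref{prop:xs-1,1} goes the other direction (it bounds $\|z\|_{X^{s-1,1}_T}$ in terms of $\|z\|_{Y^s_T}$, not vice versa), so the cubic factor comes from stacking the quadratic bound \eqref{est:ytilde} with the linear factors produced by the Strichartz and bilinear estimates, not from converting $X^{s-1,1}_T$-bounds back into $Y^s_T$-bounds.
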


\section{$L^2$-trilinear estimates} \label{Sec:3}
We follow the notation in \cite{TaoAJM01}. For any integer $n\ge 1$ and any $\xi\in\R$, the hyperplane $\Gamma^n(\xi)$ of $\mathbb R^n$ is defined by 
$$
\Gamma^n(\xi) = \{(\xi_1,\ldots, \xi_{n})\in \R^{n} : \xi_1+\ldots + \xi_{n}=\xi\}.
$$
When $\xi=0$ we simply denote $\Gamma^n = \Gamma^n(0)$. As usual, if $F$ is a measurable function on $\Gamma^n(\xi)$ we set
$$
\int_{\Gamma^n(\xi)} F(\xi_1,\ldots, \xi_{n}) = \int_{\R^{n-1}} F(\xi_1,\ldots,\xi_{n-1},  \xi - (\xi_1+\ldots+\xi_{n-1})) d\xi_1\ldots d\xi_{n-1}.
$$
Moreover, for fixed $t>0$ we introduce the sets $\R^n_t = [0,t]\times \R^n$ and $\Gamma^n_t = [0,t]\times\Gamma^n$.\\
Now if $\chi$ is a bounded measurable fonction on $\R^n$, we define the multilinear pseudo-product operator $\Pi_\chi$ by its Fourier transform
$$
\mathcal{F}\left(\Pi_\chi(f_1,\ldots, f_n)\right)(\xi) = \int_{\Gamma^{n}(\xi)} \chi(\xi_1,\ldots, \xi_n) \prod_{j=1}^n \widehat{f_j}(\xi_j) .
$$
Sometimes, when there is no risk of confusion, we also denote for $\xi=(\xi_1,\cdots,\xi_n) \in \Gamma^{n}$, 
\[ \chi(\xi_1,\cdots,\xi_{n-1})=\chi(\xi_1,\cdots,\xi_{n-1},-\xi_1-\cdots-\xi_{n-1}) . \]
We easily check that for any permutation $\sigma$ of $\{1,\ldots, n\}$, we have
\begin{equation}\label{Pi-sym}
  \int_\R \Pi_\chi(f_1,\ldots ,f_{n-1}) f_{n} = \int_\R \Pi_{\widetilde{\chi}}(f_{\sigma(1)},\ldots, f_{\sigma(n-1)}) f_{\sigma(n)}
\end{equation}
where $\widetilde{\chi}$ is also bounded and satisfies $\|\widetilde{\chi}\|_{L^\infty} \lesssim \|\chi\|_{L^\infty}$.

We start with a basic $L^2$-trilinear estimate.
\begin{lemma}\label{prod4-est}
 Let $f_j\in L^2(\mathbb R)$, $j=1,...,4$ and $M \in \mathbb D$. Assume that each $f_j$ is supported in $\{|\xi|\sim N_j\}$ for some dyadic numbers $N_j\in \D$. Then it holds that
\begin{equation} \label{prod4-est.1}
\int_{\Gamma^4} \phi_M(\xi_1+\xi_2) \prod_{j=1}^4|f_j(\xi_j)| \lesssim P \prod_{j=1}^4 \|f_j\|_{L^2} \, ,
\end{equation}
where $P=\min\{M, (N_{min}N_{thd})^{\frac 12}\}$. 
\end{lemma}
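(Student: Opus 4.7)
The strategy is to exploit the convolution structure of the integral. Writing $\eta = \xi_1+\xi_2 = -(\xi_3+\xi_4)$ and using the delta in $\Gamma^4$ to eliminate $\xi_4$, one reorganizes
\[
I = \int_{\mathbb R} \phi_M(\eta)\, F(\eta)\, H(\eta)\, d\eta,
\]
where $F := |f_1| \ast |f_2|$ and $H(\eta) := \int |f_3|(\xi_3)\,|f_4|(-\eta-\xi_3)\, d\xi_3$. The main analytic input is a Young-type bound exploiting the frequency support: since $|f_j|$ is supported in $\{|\xi|\sim N_j\}$, Cauchy--Schwarz yields $\||f_j|\|_{L^1}\lesssim N_j^{1/2}\|f_j\|_{L^2}$, from which
\[
\|F\|_{L^2} \lesssim \min(N_1,N_2)^{1/2}\|f_1\|_{L^2}\|f_2\|_{L^2}, \qquad \|F\|_{L^\infty}\le \|f_1\|_{L^2}\|f_2\|_{L^2},
\]
and the analogous estimates for $H$ in terms of $(N_3,N_4)$.

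Combining these with $\|\phi_M\|_{L^1}\lesssim M$ and Cauchy--Schwarz in $\eta$ produces four independent estimates for $I/\prod_j\|f_j\|_{L^2}$:
\[
\textrm{(A)}\ M, \qquad \textrm{(B)}\ \bigl(\min(N_1,N_2)\min(N_3,N_4)\bigr)^{1/2},
\]
\[
\textrm{(C)}\ \bigl(M \min(N_1,N_2)\bigr)^{1/2}, \qquad \textrm{(C$'$)}\ \bigl(M \min(N_3,N_4)\bigr)^{1/2}.
\]
Here (A) uses $\|F\|_{L^\infty}\|H\|_{L^\infty}\|\phi_M\|_{L^1}$, (B) uses $\|F\|_{L^2}\|H\|_{L^2}$, while (C) and (C$'$) come from the hybrid Cauchy--Schwarz $\int \phi_M F H \le \|\phi_M^{1/2}F\|_{L^2}\|\phi_M^{1/2}H\|_{L^2}$, estimating one factor by $\|\phi_M\|_{L^1}^{1/2}\cdot \|\cdot\|_{L^\infty}$ and the other by the $L^2$ convolution bound.

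The proof then concludes with a short case analysis on how $N_{\min}$ and $N_{\mathrm{thd}}$ distribute among the two pairs $\{1,2\}$ and $\{3,4\}$. If they lie in different pairs, one has $\min(N_1,N_2)\min(N_3,N_4)=N_{\min}N_{\mathrm{thd}}$, so (B) directly yields the required factor $(N_{\min}N_{\mathrm{thd}})^{1/2}$. If instead $N_{\min}$ and $N_{\mathrm{thd}}$ sit in the same pair, then the maximum of that pair equals $N_{\mathrm{thd}}$, and the support constraint $|\xi_1+\xi_2|\leq|\xi_1|+|\xi_2|\lesssim \max(N_1,N_2)$ (or its $(3,4)$ analogue) forces $M \lesssim N_{\mathrm{thd}}$; applying whichever of (C) or (C$'$) uses the $L^2$ convolution bound on the pair containing $N_{\min}$ then gives $(M N_{\min})^{1/2}\lesssim (N_{\mathrm{thd}}N_{\min})^{1/2}$. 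Together with (A) this yields $I \lesssim \min\bigl(M,(N_{\min}N_{\mathrm{thd}})^{1/2}\bigr) \prod_j \|f_j\|_{L^2}$. The only subtle point is this second configuration: the naive $L^2 \cdot L^2$ bound (B) would yield the weaker factor $(N_{\min} N_{\mathrm{sub}})^{1/2}$, and must be replaced by the mixed estimate that exploits the narrow $\phi_M$-support to trade $N_{\mathrm{sub}}$ for $N_{\mathrm{thd}}$.
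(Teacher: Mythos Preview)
Your proof is correct. The paper's own proof is a two-line sketch: it cites \cite{MPV2} for the bound with factor $M$, and says the bound with factor $(N_{\min}N_{\mathrm{thd}})^{1/2}$ is ``a direct consequence of Cauchy--Schwarz inequality''. The implicit argument for the latter is slightly different from yours: one simply discards $\phi_M \le 1$, after which the integral $\int_{\Gamma^4}\prod_j |f_j|$ is fully symmetric in the four functions; one may then rewrite it as $\int (|f_i|*|f_j|)(\eta)\,(|f_k|*|f_l|)(-\eta)\,d\eta$ for \emph{any} partition $\{i,j\}\cup\{k,l\}$, apply Cauchy--Schwarz in $\eta$ and Young to get $(\min(N_i,N_j)\min(N_k,N_l))^{1/2}$, and choose the partition placing $N_{\min}$ and $N_{\mathrm{thd}}$ in different pairs.

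Your route keeps $\phi_M$ throughout and works with the fixed pairing $\{1,2\},\{3,4\}$ imposed by the multiplier, compensating for the loss of symmetry via the hybrid estimates (C)/(C$'$) and the support observation $M\lesssim N_{\mathrm{thd}}$ in the same-pair configuration. This is a bit longer but entirely self-contained (no citation needed for the $M$ bound either), and the mixed $\|\phi_M^{1/2}F\|_{L^2}\|\phi_M^{1/2}H\|_{L^2}$ step is a nice touch. The paper's symmetry trick is shorter precisely because throwing away $\phi_M$ costs nothing once one has already recorded bound (A).
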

\begin{proof}
  Estimate \eqref{prod4-est.1} is proved in \cite{MPV2}, Lemma 3.1 with a factor $M$ in the right hand-side. Otherwise, it is a direct consequence of Cauchy-Schwarz inequality.
\end{proof}

It follows from Lemma \ref{prod4-est} that if $\chi$ is a bounded symbol on $\R^3$ and $M_1,M_2,M_3\in \D$ are dyadic numbers, then
\begin{equation}\label{l2tri-basic}
  \left|\int_\R \Pi_{\widetilde{\chi}}(f_1,f_2,f_3)f_4 \right| \lesssim P \|\widetilde{\chi}\|_{L^\infty} \prod_{i=1}^4 \|f_i\|_{L^2}
\end{equation}
where
\begin{equation}\label{def:chitilde}
  \widetilde{\chi}(\xi_1,\xi_2,\xi_3) = \phi_{M_1}(\xi_2+\xi_3)\phi_{M_2}(\xi_1+\xi_3)\phi_{M_3}(\xi_1+\xi_2) \chi(\xi_1,\xi_2,\xi_3)
\end{equation}
and
\begin{equation}\label{def:P}
  P = \min\{M_{min}, (N_{min}N_{thd})^{\frac 12}\} .
\end{equation}
where $M_{min}=\min\{M_1,M_2,M_3\}$ and $N_{min}$, $N_{thd}$ are the third-maximum and the minimum of $\{N_1,N_2,N_3,N_4\}$ as introduced in the notation. 

Below, we recall some important technical lemmas proved in \cite{MV}.

\begin{lemma}[\cite{MV}, Lemma 2.3] \label{QLbound}
Let $L\ge 1$, $1\le p\le\infty$ and $s\in\R$. The operator $Q_{\le L}$ is bounded in $L^p_tH^s_x$ uniformly in $L\ge 1$.
\end{lemma}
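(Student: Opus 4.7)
The plan is to reduce the space-time Fourier projector $Q_{\le L}$ to an ordinary time-frequency cutoff by conjugating with the free group $U_\alpha$, and then close by Young's inequality. For $u \in L^p_t H^s_x$, I first define the gauged function
\[ v(t,x) := \bigl(U_\alpha(-t) u(t,\cdot)\bigr)(x), \qquad \widehat{v}(\tau,\xi) = \widehat{u}\bigl(\tau + \omega_{\alpha+1}(\xi),\, \xi\bigr). \]
Since $U_\alpha(t)$ is an isometry on $H^s_x$ for every $t$, the map $u \mapsto v$ is an isometry of $L^p_t H^s_x$. The symbol of $Q_{\le L}$ is, by construction, of the form $\chi_L(\tau - \omega_{\alpha+1}(\xi))$ for a smooth bump $\chi_L = \chi(\cdot/L)$ supported in $\{|s| \lesssim L\}$. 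After the change of variables defining $v$, this symbol becomes $\chi_L(\tau)$, i.e.\ a pure time-frequency multiplier. Thus
\[ Q_{\le L} u \;=\; U_\alpha(t)\, \bigl(\mathcal{K}_L *_t v\bigr), \qquad \mathcal{K}_L(t) = L\,\check\chi(Lt), \]
where $*_t$ denotes convolution in the time variable only.

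For the second step I use Young's inequality in $t$: since $\chi \in \mathcal{S}(\R)$ we have $\check\chi \in L^1(\R)$ with $\|\mathcal{K}_L\|_{L^1_t} = \|\check\chi\|_{L^1}$ independent of $L$. Combining this with Minkowski's inequality applied to the Bochner-valued convolution (taking the $H^s_x$-norm inside), I obtain
\[ \|\mathcal{K}_L *_t v\|_{L^p_t H^s_x} \;\le\; \|\mathcal{K}_L\|_{L^1_t}\,\|v\|_{L^p_t H^s_x} \;\lesssim\; \|v\|_{L^p_t H^s_x}, \]
and this bound is valid for every $1 \le p \le \infty$. Conjugating back by $U_\alpha(t)$, which preserves $L^p_t H^s_x$, gives $\|Q_{\le L} u\|_{L^p_t H^s_x} \lesssim \|u\|_{L^p_t H^s_x}$ with a constant independent of $L \ge 1$.

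There is no serious obstacle: the only point that requires a little care is to check that the rough Littlewood-Paley cutoff $\psi_{\le 1} + \sum_{1 \le K \le L}\psi_K$ in the definition of $Q_{\le L}$ does correspond to a single smooth $L^1$-normalised bump $\chi_L$, so that Young's inequality can be applied. This telescoping is immediate from the definitions of $\psi_{\le 1}$ and $\phi$ given in Section~\ref{notation}. I would phrase the argument directly for $1 \le p \le \infty$, noting that the usual worry for $p = \infty$ (failure of the Hardy-Littlewood maximal function bound) does not arise here because we control $Q_{\le L}$ by a \emph{single} convolution rather than a square function.
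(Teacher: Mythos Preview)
Your proof is correct and is exactly the standard argument (the one in \cite{MV}, Lemma~2.3, which the present paper simply cites without reproducing): conjugate by the free group $U_\alpha(t)$ to turn $Q_{\le L}$ into a pure time-frequency multiplier, then control the resulting time-convolution by Young's inequality with an $L^1_t$ kernel of $L$-independent norm. One cosmetic point: the telescoping of $\psi_{\le 1}+\sum_{1< K\le L}\psi_K$ does not literally produce a single dilated bump $\chi(\cdot/L)$ but rather $\eta(\cdot/L)$ plus a fixed $L$-independent Schwartz correction; this is harmless since each piece still has inverse Fourier transform with uniformly bounded $L^1_t$-norm.
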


For any $T>0$, we consider $1_T$ the characteristic function of the interval $]0,T[$ and use the decomposition
\begin{equation}\label{ind-dec}
1_T = 1_{T,R}^{low}+1_{T,R}^{high},\quad \widehat{1_{T,R}^{low}}(\tau)=\eta(\tau/R)\widehat{1_T}(\tau)
\end{equation}
for some $R>0$ and where $\eta$ is defined in Subsection \ref{notation} and satisfies the additional condition $\int (\eta)^{\vee}d\tau=1$.

\begin{lemma}[\cite{MPV3}, Lemma 3.6]\label{ihigh-lem} For any $ R>0 $ and $ T>0 $ it holds
\begin{equation}\label{high}
\|1_{T,R}^{high}\|_{L^1}\lesssim T\wedge R^{-1}  \textrm{ and } \|1_{T,R}^{high}\|_{L^\infty}\lesssim 1.
\end{equation}
and
\begin{equation}\label{low}
\|1_{T,R}^{low}\|_{L^1} \lesssim T \textrm{ and } \|1_{T,R}^{low}\|_{L^\infty}\lesssim  1.
\end{equation}
\end{lemma}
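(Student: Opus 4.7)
The plan is to rewrite the low-frequency truncation as a convolution with a smooth kernel and then apply elementary approximate-identity arguments. Setting $K_R(t):=R\,\eta^{\vee}(Rt)$, we have $\widehat{K_R}(\tau)=\eta(\tau/R)$ and hence $1_{T,R}^{low}=K_R*1_T$. Since $\eta\in C_0^\infty(\R)$ with $\eta(0)=1$, the inverse Fourier transform $\eta^{\vee}$ is Schwartz class with $\int \eta^{\vee}(t)\,dt=1$ (the normalisation assumed in \eqref{ind-dec}), and therefore $K_R$ enjoys (i) $\|K_R\|_{L^1}=\|\eta^{\vee}\|_{L^1}\lesssim 1$, (ii) $\int_\R K_R(t)\,dt=1$, and (iii) the Schwartz pointwise bound $|K_R(t)|\lesssim_N R(1+R|t|)^{-N}$ for any $N\ge 0$.

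The bounds in \eqref{low} then follow immediately from Young's convolution inequality: $\|K_R*1_T\|_{L^1}\le \|K_R\|_{L^1}\|1_T\|_{L^1}\lesssim T$ and $\|K_R*1_T\|_{L^\infty}\le \|K_R\|_{L^1}\|1_T\|_{L^\infty}\lesssim 1$. The $L^\infty$ bound in \eqref{high} follows from $\|1_T\|_{L^\infty}=1$ and the triangle inequality, while the bound $\|1_{T,R}^{high}\|_{L^1}\lesssim T$ follows from $\|1_T\|_{L^1}=T$ together with the $L^1$ bound just obtained on $1_{T,R}^{low}$.

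The only substantive point is the sharper estimate $\|1_{T,R}^{high}\|_{L^1}\lesssim R^{-1}$. I write $1_{T,R}^{high}(t)=1_T(t)-(K_R*1_T)(t)$ and exploit $\int K_R=1$. For $t\in(0,T)$ this gives
\[ 1_{T,R}^{high}(t)=\int_{-\infty}^{t-T}K_R(u)\,du+\int_t^{+\infty}K_R(u)\,du, \]
while for $t\notin[0,T]$ the term $1_T(t)$ vanishes and $1_{T,R}^{high}(t)=-\int_{t-T}^{t}K_R(u)\,du$. In each case the decay (iii) yields a pointwise estimate of the form $|1_{T,R}^{high}(t)|\lesssim_N (1+R\,\mathrm{dist}(t,\{0,T\}))^{-N+1}$, which integrates over $\R$ to something $\lesssim R^{-1}$. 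Combined with the previously established bound by $T$, this gives the full $T\wedge R^{-1}$ estimate.

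I do not anticipate any real obstacle: the argument is essentially bookkeeping, the key analytic fact being that $K_R$ is a Schwartz-class approximate identity of mass one. The only thing to be mildly careful about is correctly exploiting the Schwartz tails of $K_R$ on each of the three pieces $(-\infty,0)$, $(0,T)$, $(T,+\infty)$; no subtle cancellation beyond the normalisation $\int K_R=1$ is required.
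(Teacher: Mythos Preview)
Your argument is correct. The paper does not give its own proof of this lemma but simply cites \cite{MPV3}, Lemma~3.6, so there is no in-paper proof to compare against; what you have written is precisely the standard approximate-identity computation that underlies the cited result. The only step worth spelling out a bit more in a final write-up is the pointwise tail bound: for $t\in(0,T)$ you are summing two tail integrals of $K_R$, each controlled by $(1+R\,\mathrm{dist}(t,\{0,T\}))^{-N+1}$, and for $t\notin[0,T]$ the single integral $\int_{t-T}^t K_R$ is controlled by the tail starting at $\mathrm{dist}(t,\{0,T\})$; integrating this pointwise bound over the three regions $(-\infty,0)$, $(0,T)$, $(T,\infty)$ and changing variables $s=Rt$ gives the claimed $R^{-1}$.
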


We are now ready to state our trilinear estimates involving the resolution spaces $Z^0$ and $Y^0$. These estimates will be fundamental to derive the energy estimates in the next sections. 
In the sequel we set 
\begin{equation}\label{defOmega}
\Omega = M_{min}M_{med}N_{max}^{\alpha-1} \; .
\end{equation}
\begin{proposition}\label{prop:l2tri}
  Assume $t\in ]0,1]$ and $u_i\in Z^0$, $i=1,2,3,4$ satisfy $\supp \widehat{u_i}\subset \{|\xi|\sim N_i\}$ for some dyadic numbers $N_1, N_2, N_3, N_4$ such that $N_{max}\gg 1$. Let $\chi\in L^\infty(\R^3)$ and define $\widetilde{\chi}$ as in \eqref{def:chitilde} for some dyadic numbers $M_1,M_2,M_3>0$. If $\Omega \gtrsim 1$, then for any $0\le\theta\le 1$, it holds
  \begin{equation}\label{l2tri.1}
    \left|\int_{\R_t} \Pi_{\widetilde{\chi}}(u_1,u_2,u_3)u_4 dxdt\right| \lesssim P(N_{max}\Omega^{-1})^\theta \|\widetilde{\chi}\|_{L^\infty} \prod_{i=1}^4 \|u_i\|_{Z^0}
  \end{equation}
  where $P$ is defined in \eqref{def:P}.
\end{proposition}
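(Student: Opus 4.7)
The plan is to prove the two endpoint cases $\theta = 0$ and $\theta = 1$ of the claimed inequality, with the intermediate range $\theta \in (0,1)$ then following from the trivial interpolation $|I| \le \min(A, B) \le A^{1-\theta} B^\theta$ whenever $|I| \le A$ and $|I| \le B$. The case $\theta = 0$ is immediate: applying \eqref{l2tri-basic} at each time $t' \in [0, t] \subset [0, 1]$ and using the embedding $Z^0 \hookrightarrow L^\infty_t L^2_x$ yields
$$|I| \le t \cdot P \|\widetilde{\chi}\|_{L^\infty} \prod_{i=1}^4 \|u_i\|_{L^\infty_t L^2_x} \lesssim P \|\widetilde{\chi}\|_{L^\infty} \prod_{i=1}^4 \|u_i\|_{Z^0}.$$

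For $\theta = 1$, the target is $|I| \lesssim P\, N_{max} \Omega^{-1} \|\widetilde{\chi}\|_{L^\infty} \prod \|u_i\|_{Z^0}$. I would split the time cutoff $\mathbf{1}_{[0,t]} = \mathbf{1}^{low}_{t,R} + \mathbf{1}^{high}_{t,R}$ from \eqref{ind-dec} with $R = c\Omega$ for some small $c > 0$. By Lemma \ref{ihigh-lem}, $\|\mathbf{1}^{high}_{t,R}\|_{L^1_t} \lesssim R^{-1} \lesssim \Omega^{-1}$, so combined with \eqref{l2tri-basic} the high part is bounded by $\Omega^{-1} P \|\widetilde{\chi}\|_{L^\infty} \prod \|u_i\|_{L^\infty_t L^2_x}$, which is absorbed into the target bound thanks to $N_{max} \gtrsim 1$. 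For the low part, after incorporating $\mathbf{1}^{low}_{t,R}$ into $u_4$ and passing to the spacetime Fourier side, the resonance identity on the hyperplane $\{\sum \xi_i = 0,\ \sum \tau_i = 0\}$ reads $\sum_i (\tau_i^\sharp - \omega_{\alpha+1}(\xi_i)) = \Omega_4$, where $\tau_i^\sharp = \tau_i$ for $i \le 3$ and $\tau_4^\sharp$ differs from $\tau_4$ by at most $O(R)$ because of the convolution with $\widehat{\mathbf{1}^{low}_{t,R}}$. Since $|\Omega_4| \sim \Omega \gg R$, a pigeonhole argument forces at least one of the four spacetime modulations to be $\gtrsim \Omega$; by the symmetry of the estimate we may assume this is the modulation of $u_1$.

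Applying \eqref{l2tri-basic} spatially, H\"older in time (putting $\mathbf{1}^{low}_{t,R}$ and $\|Q_{\gtrsim \Omega} u_1(t')\|_{L^2_x}$ in $L^2_t$ and the remaining three factors in $L^\infty_t L^2_x$), and $\|\mathbf{1}^{low}_{t,R}\|_{L^2_t} \lesssim 1$ from Lemma \ref{ihigh-lem}, reduces the matter to bounding $\|Q_{\gtrsim \Omega} u_1\|_{L^2_{t,x}}$ by the $Z^0$-norm. Decomposing $u_1 = v_1 + w_1$ with $v_1 \in X^{-1, 1}$ and $w_1 \in X^{0, (1/2)_+}$ as in the sum space $F^{0, 1/2}$, the elementary Bourgain-type bounds
$$\|Q_{\gtrsim \Omega} v_1\|_{L^2_{t,x}} \lesssim N_{max} \Omega^{-1} \|v_1\|_{X^{-1, 1}}, \qquad \|Q_{\gtrsim \Omega} w_1\|_{L^2_{t,x}} \lesssim \Omega^{-1/2} \|w_1\|_{X^{0, (1/2)_+}}$$
together with the dyadic budget $\Omega = M_{min} M_{med} N_{max}^{\alpha - 1} \lesssim N_{max}^{\alpha + 1} \le N_{max}^2$ (which implies $\Omega^{-1/2} \le N_{max} \Omega^{-1}$) give $\|Q_{\gtrsim \Omega} u_1\|_{L^2_{t,x}} \lesssim N_{max} \Omega^{-1} \|u_1\|_{F^{0, 1/2}} \le N_{max} \Omega^{-1} \|u_1\|_{Z^0}$, finishing this case. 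The subcases where the high-modulation index is $2, 3$ or $4$ are handled identically, the last one relying on the fact that the $O(R)$ shift from $\mathbf{1}^{low}_{t,R}$ is absorbed since $R \ll \Omega$.

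The main obstacle is the mismatch between the $\Omega^{-1/2}$ decay supplied by the $X^{0, (1/2)_+}$ part of the sum space $F^{0, 1/2}$ and the $\Omega^{-1}$ decay required at $\theta = 1$; this gap is closed precisely by the factor $N_{max}$ appearing in the target bound, together with the dyadic budget constraint $\Omega \lesssim N_{max}^2$ forced by $M_{min}, M_{med} \lesssim N_{max}$ and $\alpha \le 1$. It is this interplay that dictates the appearance of $N_{max}\Omega^{-1}$ (rather than a smaller power) in the $\theta = 1$ endpoint of the interpolation.
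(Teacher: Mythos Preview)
Your proposal is correct and follows essentially the same approach as the paper: interpolation between the trivial $\theta=0$ endpoint and the $\theta=1$ endpoint, the latter handled by splitting $\mathbf{1}_{[0,t]}$ into low and high temporal-frequency pieces, using the resonance relation $|\Omega_4|\sim\Omega$ to force one high modulation in the low piece, and closing via the sum-space bound $\|Q_{\gtrsim\Omega}u_j\|_{L^2_{t,x}}\lesssim(\langle N_j\rangle\Omega^{-1}+\Omega^{-(1/2)_+})\|u_j\|_{F^{0,1/2}}$ together with $\Omega\lesssim N_{max}^2$. The only visible differences are cosmetic: the paper takes $R=N_{max}^{-1}\Omega$ rather than your $R=c\Omega$ (both work, yours gives a slightly sharper high-part bound $\Omega^{-1}$ versus $N_{max}\Omega^{-1}$, either of which suffices), and when the high-modulation index is $j\ge 2$ the paper explicitly invokes Lemma~\ref{QLbound} to control the preceding $Q_{\ll\Omega}u_k$ factors in $L^\infty_tL^2_x$, a routine point you pass over with the symmetry remark.
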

\begin{proof}
  When $\theta = 0$, estimate \eqref{l2tri.1} is directly obtained from \eqref{l2tri-basic}. Thus, by interpolation we may assume $\theta=1$. By \eqref{Pi-sym}, we also may assume by symmetry $N_1\le N_2\le N_3\le N_4$, which enforces $N_3\sim N_4 =: N$. We split the integral $G_t$ in the left hand-side of \eqref{l2tri.1} as $G_t = G_t^{low} + G_t^{high}$ with
  $$
     G_t^{low}(u_1,u_2,u_3,u_4) = \int_{\R^2} 1_{t,R}^{low} \Pi_{\widetilde{\chi}}(u_1,u_2,u_3)u_4 dxdt
  $$
  for $R=N^{-1}\Omega \ll \Omega$ (recall that $N\gg 1$). The high-part is estimated thanks to \eqref{prod4-est.1} and \eqref{high} by
  \begin{equation}
    |G_t^{high}(u_1,u_2,u_3,u_4)| \lesssim PR^{-1} \|\widetilde{\chi}\|_{L^\infty} \prod_{i=1}^4 \|u_i\|_{L^\infty_tL^2_x},
  \end{equation}
  which is acceptable since $PR^{-1} = PN\Omega^{-1}$.
  Next, we decompose $G_t^{low}$ with respect to the modulation variables as follows:
  $$
  G_t^{low}(u_1,u_2,u_3,u_4) = \sum_{L_1,L_2,L_3,L_4} \int_{\R^2} 1_{t,R}^{low} \Pi_{\widetilde{\chi}}(Q_{L_1}u_1,Q_{L_2}u_2,Q_{L_3}u_3)Q_{L_4}u_4 dxdt.
  $$
  Thanks to Proposition \ref{est-Omega4}, combined with the fact that $R\ll \Omega$, this sum vanishes unless $L_{max}\gtrsim \Omega$ where $L_{max}=\max\{L_1,L_2,L_3,L_4\}$, and we are led to evaluate each contribution $G_{t,j}^{low}$, $j=1,2,3,4$ of $G_t^{low}$ to the case $L_{max}=L_j$. By making use of \eqref{low} we bound $G_{t,1}^{low}$ by
  \begin{align*}
    |G_{t,1}^{low}| &\lesssim P \|1_{t,R}^{low}\|_{L^2} \|Q_{\gtrsim \Omega}u_1\|_{L^2_{tx}}  \prod_{i=2}^4 \|u_i\|_{L^\infty_tL^2_x}\\
    &\lesssim P(\cro{N_1}\Omega^{-1}+\Omega^{-(1/2)_+}) \|u_1\|_{F^0} \prod_{i=2}^4 \|u_i\|_{Z^0}.
  \end{align*}
  Observe that since $\Omega=M_{min}M_{med}N^{\alpha-1}\lesssim N^2$ for $\alpha\le 1$, it holds recalling that $\Omega \gtrsim 1$ by hypothesis,
  \begin{equation}\label{l2tri.2}
   \Omega^{-(\frac 12)_+} \lesssim N\Omega^{-1}.
  \end{equation}
  Thus, since $\cro{N_1}\lesssim N$ we get
  $$
  |G_{t,1}^{low}| \lesssim PN\Omega^{-1} \prod_{i=1}^4 \|u_i\|_{Z^0}.
  $$
  To deal with the contribution $G_{t,2}^{low}$ we infer from \eqref{l2tri-basic}, Lemma \ref{QLbound}, \eqref{low} and \eqref{l2tri.2} that
  \begin{align*}
    |G_{t,2}^{low}| &\lesssim P \|1_{t,R}^{low}\|_{L^2} \|Q_{\ll \Omega}u_1\|_{L^\infty_tL^2_x} \|Q_{\gtrsim \Omega}u_2\|_{L^2_{tx}} \|u_3\|_{L^\infty_tL^2_x} \|u_4\|_{L^\infty_tL^2_x}\\
    &\lesssim P(\cro{N_2}\Omega^{-1}+\Omega^{-(\frac 12)_+}) \|u_2\|_{F^0} \prod_{i\neq 2} \|u_i\|_{Z^0}\\
    &\lesssim PN\Omega^{-1} \prod_{i=1}^4 \|u_i\|_{Z^0}.
  \end{align*}
  Finally, by symmetry if suffices to estimate $G_{t,3}^{low}$. We get similarly
  \begin{align*}
    |G_{t,3}^{low}| &\lesssim P \|1_{t,R}^{low}\|_{L^2} \|Q_{\ll \Omega}u_1\|_{L^\infty_tL^2_x} \|Q_{\ll \Omega} u_2\|_{L^\infty_tL^2_x} \|Q_{\gtrsim \Omega}u_3\|_{L^2_{tx}} \|u_4\|_{L^\infty_tL^2_x}\\
    &\lesssim P(N\Omega^{-1}+\Omega^{-(\frac 12)_+}) \|u_2\|_{F^0} \prod_{i\neq 3} \|u_i\|_{Z^0}\\
    &\lesssim PN\Omega^{-1} \prod_{i=1}^4 \|u_i\|_{Z^0}.
  \end{align*}  
\end{proof}
To deal with a particular region in ou energy estimates, we will also need the following result involving the Strichartz part in the $Z^0$ norm.
\begin{proposition}\label{prop:l2tri.2}
 Assume that $t \in ]0,1]$. Let $u_i\in Z^0$, $i=1,2,3,4$ with spatial Fourier support in $\{|\xi_i|\sim N_i\}$ for some $N_i\in\D$ satisfying $N_{min}\ll N_{thd}\ll N_{max}$ and $N_{thd}\gg 1$. Then we have
  \begin{equation}\label{l2tri.3}
    \left|\int_{\R_t} u_1u_2u_3u_4 dxdt\right| \lesssim N_{min}^{\frac 12}N_{thd}^{-\frac \alpha 4} N_{max}^{\frac{1-\alpha}2} \prod_{i=1}^4 \|u_i\|_{Z^0}.
  \end{equation}
\end{proposition}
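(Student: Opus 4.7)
The target factor $N_{min}^{\frac12} N_{thd}^{-\frac\alpha 4} N_{max}^{\frac{1-\alpha}2}$ decomposes naturally into three contributions: (i) $N_{min}^{\frac12}$ from a Bernstein estimate on the lowest-frequency factor; (ii) $N_{thd}^{\frac12-\frac\alpha 4}$ from the $L^2_tL^\infty_x$ Strichartz component built into the $Z^0$ norm (which is sharper than the naive Bernstein bound $N_{thd}^{\frac12}$ precisely because $N_{thd}\gg 1$); and (iii) the resonance factor $(N_{max}/\Omega)^{\frac12}=N_{max}^{\frac{1-\alpha}2}N_{thd}^{-\frac12}$ extracted by a modulation-decomposition argument, where in this region $\Omega \sim N_{thd}\, N_{max}^{\alpha}$. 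Combining the three yields exactly the claimed bound.

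Using the symmetry \eqref{Pi-sym}, I first relabel so that $|\xi_1|\sim N_{min}$, $|\xi_2|\sim N_{thd}$, $|\xi_3|\sim |\xi_4|\sim N_{max}$, the last comparability being forced by $\xi_1+\xi_2+\xi_3+\xi_4=0$ together with $N_{min},N_{thd}\ll N_{max}$. I then split the characteristic function $1_{[0,t]}=1_{t,R}^{low}+1_{t,R}^{high}$ at the level $R=N_{max}\Omega^{-1}\sim N_{max}^{1-\alpha}/N_{thd}$, noting that $R\ll \Omega$ since $N_{thd}\gg 1$ and $\alpha>0$. The high-time-modulation piece is handled by $\|1_{t,R}^{high}\|_{L^1}\lesssim R^{-1}$ (Lemma \ref{ihigh-lem}), combined with Bernstein $\|u_1\|_{L^\infty_{tx}}\lesssim N_{min}^{\frac12}\|u_1\|_{Z^0}$, the Strichartz bound $\|u_2\|_{L^2_tL^\infty_x}\lesssim N_{thd}^{(\frac12-\frac\alpha 4)_+}\|u_2\|_{Z^0}$, and the $L^\infty_tL^2_x$ norms of $u_3,u_4$; the factor $R^{-1}=N_{thd}N_{max}^{-(1-\alpha)}$ precisely provides the extra gain needed.

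For the low-time-modulation piece I perform a modulation decomposition $u_i=\sum_{L_i}Q_{L_i}u_i$. The cubic resonance identity $\Omega_4(\xi_1,\xi_2,\xi_3)\sim \Omega$ (valid in this region, cf.\ the factorisation $|\Omega_4|\sim M_{min}M_{med}N_{max}^{\alpha-1}$ from Appendix~\ref{app:symbols} with $M_{min}\sim N_{thd}$ and $M_{med}\sim N_{max}$) together with $\supp_\tau\widehat{1_{t,R}^{low}}\subset\{|\tau|\lesssim R\ll \Omega\}$ forces $L_{max}\gtrsim \Omega$ in the non-vanishing contributions. I would then estimate the non-maximal-modulation factors in $L^\infty_tL^2_x$ (using Lemma~\ref{QLbound} and the $L^\infty_tL^2_x$ leg of $Z^0$), bound $u_1$ by Bernstein, $u_2$ by the $Z^0$-Strichartz, and the factor carrying the maximal modulation by the $F^{0,\frac12}$-piece of the $Z^0$ norm together with $\|1_{t,R}^{low}\|_{L^2}\lesssim t^{\frac12}\wedge R^{\frac12}$. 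Summing in $L_{max}$ via Cauchy--Schwarz produces the factor $(\cro{N_i}\Omega^{-1}+\Omega^{-(\frac12)_+})$, which is controlled by $R\Omega^{-1}$ thanks to $\Omega\lesssim N_{max}^{1+\alpha}$; this is precisely the mechanism already used in the proof of Proposition~\ref{prop:l2tri}.

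\textbf{Main obstacle.} The delicate point is coordinating the Strichartz leg, which pins $u_2$ to the slot $L^2_tL^\infty_x$, with the resonance gain $\Omega^{-\frac12}$ coming from the modulation decomposition: once $u_2$'s Hölder pairing is fixed, one must still ensure that the $Q_{\gtrsim \Omega}$ piece of whichever remaining $u_i$ carries the maximal modulation can be bounded via the $F^{0,\frac12}$-norm, and that the analogue of inequality $\Omega^{-(\frac12)_+}\lesssim N_{max}\Omega^{-1}$ (see \eqref{l2tri.2} in the proof of Proposition~\ref{prop:l2tri}) still absorbs the $+$ loss. Once this accounting is carried out, the product $N_{min}^{\frac12}\cdot N_{thd}^{\frac12-\frac\alpha 4}\cdot (N_{max}^{1-\alpha}/N_{thd})^{\frac12}=N_{min}^{\frac12}N_{thd}^{-\frac\alpha 4}N_{max}^{\frac{1-\alpha}2}$ emerges, completing the estimate.
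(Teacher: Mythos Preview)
Your overall strategy matches the paper's, but the execution has two concrete errors that prevent the argument from closing.

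\textbf{High part.} Your H\"older pairing is inconsistent: placing $1_{t,R}^{high}$ in $L^1_t$ while simultaneously putting $u_2$ in $L^2_tL^\infty_x$ (and $u_1,u_3,u_4$ in $L^\infty_t$) gives time exponents summing to $3/2$, not $1$. The paper avoids this by \emph{not} using Strichartz on the high part at all: it uses the crude bound $(N_1N_2)^{1/2}$ from Lemma~\ref{prod4-est} together with $\|u_i\|_{L^\infty_tL^2_x}$ for all four factors and $\|1_{t,R}^{high}\|_{L^1}\lesssim R^{-1}$. This forces a different threshold $R=N_{thd}^{\frac12+\frac\alpha4}N_{max}^{\frac{\alpha-1}2}$ (not your $R=N_{max}\Omega^{-1}$), chosen precisely so that $(N_1N_2)^{1/2}R^{-1}$ already equals the target $N_{min}^{1/2}N_{thd}^{-\alpha/4}N_{max}^{(1-\alpha)/2}$. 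One checks $R\ll\Omega$ from $N_{thd}\gg1$; by contrast, your $R$ need not satisfy $R\ll\Omega$ when $\alpha$ is small.

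\textbf{Low part.} You propose to always place $u_2$ in $L^2_tL^\infty_x$ and the maximal-modulation factor in $F^{0,\frac12}$, but these roles collide when $L_{max}=L_2$. The paper treats each case $L_{max}=L_j$ separately and moves the Strichartz leg around: for $j=1$ it lands on $u_2$; for $j=2$ it lands on $u_3$ (with $u_2$ in $F^0$); for $j\in\{3,4\}$ it lands on $Q_{\ll\Omega}u_2$, and the projector is controlled via $Q_{\ll\Omega}=I-Q_{\gtrsim\Omega}$ plus Bernstein in $x$ (this is exactly the obstacle you flag in your last paragraph but do not resolve). In all low-part cases the cutoff $1_{t,R}^{low}$ sits in $L^\infty_t$, not $L^2_t$ as you write, since two of the $u_i$ already occupy $L^2_t$.
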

\begin{proof}
  The proof is similar to the proof of Proposition \ref{prop:l2tri}. By symmetry, it suffices to consider the case $N_1\ll N_2\ll N_3\sim N_4$, so that $\Omega\sim N_2N_4^\alpha\gtrsim 1$. First we decompose the integral $G_t$ in the left hand-side of \eqref{l2tri.3} as
  $$
  G_t = \int_{\R^2}1_{t,R}^{low}u_1u_2u_3u_4dxdt + \int_{\R^2}1_{t,R}^{high}u_1u_2u_3u_4dxdt =: G_t^{low} + G_t^{high}
  $$
  with $R=N_2^{\frac12+\frac{\alpha}4}N_4^{\frac{\alpha-1}2}\ll \Omega$. From Holder inequality and \eqref{high} it holds
  $$
  |G_t^{high}|\lesssim (N_1N_2)^{\frac 12} R^{-1} \prod_{i=1}^4\|u_i\|_{L^\infty_tL^2_x} \lesssim N_1^{\frac 12}N_2^{-\frac \alpha 4} N_4^{\frac{1-\alpha}2} \prod_{i=1}^4 \|u_i\|_{Z^0}.
  $$
  For the low part of $G_t$, we write
  $$
  G_t^{low} = \sum_{L_1,L_2,L_3,L_4  \atop L_{max}\gtrsim N_2N_4^\alpha} \int_{\R^2} 1_{t,R}^{low} Q_{L_1}u_1 Q_{L_2}u_2 Q_{L_3}u_3 Q_{L_4}u_4. 
  $$
  As in the proof of Proposition \ref{prop:l2tri}, let us denote by $G_{t,i}^{low}$ the contribution of the sum over $L_{max}=L_i$ to $G_t^{low}$. Observe that for any function $v \in Z^0$, and for any $N \in \D$, $L \in \D_{\ge 1}$, we have
    \[ \|Q_Lv_N\|_{L^2_{t,x}} \lesssim \|Q_Lu_N\|_{L^2_{t,x}}^{\frac12}(NL^{-1}+L^{-(\frac12)_-})^{\frac12}\|Q_Lu_N\|_{F^0}^{\frac12} \lesssim (NL^{-1}+L^{-(\frac12)_-})^{\frac12} \|v_N\|_{Z^0} . \]
  Then, for $G_{t,1}^{low}$, after resuming over $L_2$, $L_3$, $L_4$, Lemma \ref{ihigh-lem} provides
  \begin{align*}
    |G_{t,1}^{low}| &\lesssim N_1^{\frac 12} \|1_{t,R}^{low}\|_{L^\infty} \|Q_{\gtrsim N_2N_4^\alpha}u_1\|_{L^2_{tx}} \|u_2\|_{L^2_tL^\infty_x} \|u_3\|_{L^\infty_tL^2_x} \|u_4\|_{L^\infty_tL^2_x}\\
    &\lesssim N_1^{\frac 12} (\cro{N_1}(N_2N_4^\alpha)^{-1} + (N_2N_4^\alpha)^{-(\frac 12)_+})^{\frac 12} N_2^{\frac 12-\frac \alpha 4}\prod_{i=1}^4 \|u_i\|_{Z^0}\\
    & \lesssim N_1^{\frac 12}N_2^{-\frac \alpha 4} N_4^{\frac{1-\alpha}2} \prod_{i=1}^4 \|u_i\|_{Z^0},
  \end{align*}
  since we assumed $ N_2=N_{th}\gg 1$.
  In the case $L_{max}=L_2$ we argue similarly and use Lemma \ref{QLbound} to obtain
  \begin{align*}
    |G_{t,2}^{low}| &\lesssim N_1^{\frac 12} \|1_{t,R}^{low}\|_{L^\infty} \|Q_{\ll N_2N_4^\alpha}u_1\|_{L^\infty_tL^2_x} \|Q_{\gtrsim N_2N_4^\alpha}u_2\|_{L^2_{tx}} \|u_3\|_{L^2_tL^\infty_x} \|u_4\|_{L^\infty_tL^2_x}\\
    &\lesssim N_1^{\frac 12} (N_2(N_2N_4^\alpha)^{-1} + (N_2N_4^\alpha)^{-(\frac 12)_+})^{\frac 12} N_4^{\frac 12-\frac \alpha 4} \prod_{i=1}^4 \|u_i\|_{Z^0}\\
    & \lesssim N_1^{\frac 12}N_2^{-\frac \alpha 4} N_4^{\frac{1-\alpha}2} \prod_{i=1}^4 \|u_i\|_{Z^0}.
  \end{align*}
  It remains to consider the contribution of the sum over $L_{max}=L_3$ since the case $L_{max}=L_4$ may be evaluated in the same way. We get 
  \begin{equation}\label{l2tri.4}
  |G_{t,3}^{low}| \lesssim N_1^{\frac 12} N_2^{-\frac 12} N_4^{\frac {1-\alpha}2} \|u_1\|_{L^\infty_tL^2_x} \|Q_{\ll N_2N_4^\alpha}u_2\|_{L^2_tL^\infty_x} \|u_3\|_{F^0} \|u_4\|_{L^\infty_tL^2_x}.
  \end{equation}
  Since it is not clear wether $Q_L$ is bounded or not in $L^2_tL^\infty_x$, we rewrite $Q_{\ll\Omega} = I - Q_{\gtrsim \Omega}$ and obtain thanks to Bernstein inequality that
  \begin{align*}
    \|Q_{\ll N_2N_4^\alpha}u_2\|_{L^2_tL^\infty_x} &\lesssim \|u_2\|_{L^2_tL^\infty_x} + N_2^{\frac 12} \|Q_{\gtrsim N_2N_4^\alpha}u_2\|_{L^2_{tx}} \\
    &\lesssim \left(N_2^{\frac 12-\frac \alpha 4} + N_2^{\frac 12}(N_2(N_2N_4^\alpha)^{-1}+(N_2N_4^\alpha)^{-(\frac 12)_+})\right ) \|u_2\|_{Z^0}\\
    &\lesssim N_2^{\frac 12-\frac \alpha 4} \|u_2\|_{Z^0}.
  \end{align*}
  Inserting this into \eqref{l2tri.4} we conclude
  $$
  |G_{t,3}^{low}| \lesssim N_1^{\frac 12}N_2^{-\frac \alpha 4} N_4^{\frac{1-\alpha}2} \prod_{i=1}^4 \|u_i\|_{Z^0}.
  $$
  
\end{proof}

\section{Energy estimates for solutions} \label{Sec:4}
We provide a priori estimates for solutions of \eqref{fKdV} in $H^s(\R)$ for $s>\widetilde{s}_\alpha$ where $\widetilde{s}_\alpha$ is defined in \eqref{cond-stilde}.

Let $0<T\le 1$, $N_0\gg 1$, and $u\in Y^s_T$ be a solution of \eqref{fKdV} on $[0,T]$. For $t\in (0,T)$ we define the modified energy
\begin{equation}\label{def:Es}
E^s(t) = E^s(t,N_0,u) = \mathcal{E}_2(t,u) + c\mathcal{E}_3(t,N_0,u)
\end{equation}
where $c \in\R$ and $\mathcal{E}_3(t,N_0,u)=\mathcal{E}_3(t)$ will be defined later, and 
$$
\mathcal{E}_2(t) =\mathcal{E}_2(t,u)= \frac 12 \left( \|u(t)\|_{L^2_x}^2 +\|D_x^su(t)\|_{L^2_x}^2 \right).
$$
Differentiating $\mathcal{E}_2(t)$ and integrating on $(0,t)$, we get
\begin{align}
\mathcal{E}_2(t) &= \mathcal{E}_2(0) + \int_{\R_t} D^{2s}_xu \partial_x(u^2) \notag \\
& = \mathcal{E}_2(0) -i \int_{\Gamma^3_t} \xi_1| \xi_1|^{2s} {\bf 1}_{|\xi_1|\le N_0} \widehat{u}(\xi_1) \widehat{u}(\xi_2) \widehat{u}(\xi_3) -i \int_{\Gamma^3_t} \xi_1|\xi_1|^{2s}{\bf 1}_{|\xi_1|\ge N_0} \widehat{u}(\xi_1) \widehat{u}(\xi_2) \widehat{u}(\xi_3) \notag \\
&:= \mathcal{E}_2(0) + \mathcal{I}_0(t) + \mathcal{I}(t). \label{e2tilde}
\end{align}
We easily bound the low frequencies term $\mathcal{I}_0$ thanks to Plancherel and Bernstein inequalities by
\begin{equation}\label{est:I0}
  |\mathcal{I}_0(t)| \lesssim T N_0^{\frac 32}\|u\|_{L^\infty_TL^2_x}\|u\|_{L^\infty_TH^s_x}^2.
\end{equation}
We symmetrize $\mathcal{I}(t)$ to
\begin{equation} \label{def:I}
\mathcal{I}(t) = -\frac i3 \int_{\Gamma^3_t} a_3(\xi_1,\xi_2,\xi_3) \widehat{u}(\xi_1) \widehat{u}(\xi_2) \widehat{u}(\xi_3)
\end{equation}
with
$$
a_3(\xi_1,\xi_2,\xi_3) = \sum_{i=1}^3\xi_i |\xi_i|^{2s}1_{|\xi_i|\ge N_0}.
$$
To remove the contribution of $\mathcal{I}$, we set $c = 1/3$ and
\begin{equation} \label{def:E3}
\mathcal{E}_3(t)= \mathcal{E}_3(t,N_0,u)= \int_{\Gamma^3} b_3(\xi_1,\xi_2,\xi_3) \prod_{i=1}^3 \widehat{u}(\xi_i)
\end{equation}
with a symbol $b_3$ defined on $\Gamma^2$ by
$$
b_3(\xi_1,\xi_2,\xi_3) = \frac{a_3(\xi_1,\xi_2,\xi_3)}{\Omega_3(\xi_1,\xi_2,\xi_3)}.
$$
Note that $b_3$ is symmetric and that we have from \eqref{est-Omega3}, $|b_3(\xi_1,\xi_2,\xi_3)| \sim |\xi_{max}|^{2s-\alpha}$. The following lemma states that our energy is coercive.
\begin{lemma} \label{lemma:coer}
  There exists $C_1>0$ such that for any $\widetilde{s}_\alpha<s \le 3$ and $0<T\le 1$, if $u\in C([0,T] : H^s)$ is a solution of \eqref{fKdV}, then for any $t\in (0,T)$ and $N_0\ge  C_1 (1+\|u\|_{L^\infty_TH^{\widetilde{s}_\alpha}})^{\frac 2 \alpha}$, it holds
  \begin{equation}\label{coesive}
    \left| E^s(t) - \frac 12\|u(t)\|_{H^s}^2\right|\le \frac 14 \|u(t)\|_{H^s}^2.
  \end{equation}
\end{lemma}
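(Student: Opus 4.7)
The plan is to reduce the coercivity estimate to a quantitative control of the cubic correction. Adopting the convention $\|u\|_{H^s}^2=\|u\|_{L^2}^2+\|D^s_xu\|_{L^2}^2$, we have $\mathcal{E}_2(t)=\tfrac12\|u(t)\|_{H^s}^2$ exactly, and with $c=1/3$
\begin{equation*}
E^s(t)-\tfrac 12\|u(t)\|_{H^s}^2 = \tfrac 13\mathcal{E}_3(t),
\end{equation*}
so it suffices to show $|\mathcal{E}_3(t)|\le \tfrac 3 4\|u(t)\|_{H^s}^2$ provided $N_0$ is large enough in terms of $\|u\|_{L^\infty_TH^{\widetilde{s}_\alpha}}$. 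The key input is the already-derived symbol bound $|b_3(\xi_1,\xi_2,\xi_3)|\lesssim |\xi_{max}|^{2s-\alpha}$ on the support $\{|\xi_{max}|\ge N_0\}$, coming from the lower bound $|\Omega_3|\gtrsim N_{min}N_{max}^\alpha$ together with a first-order Taylor cancellation in $a_3$ in the asymmetric regime $N_{min}\ll N_{max}$.

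Next, I would perform a homogeneous Littlewood--Paley decomposition and, using the symmetry of $b_3$, order $N_1\ge N_2\ge N_3$. The constraint $\xi_1+\xi_2+\xi_3=0$ forces $N_1\sim N_2$ and the support condition of $b_3$ forces $N_1\gtrsim N_0$. Applying H\"older followed by Bernstein's inequality on the smallest frequency gives
\begin{equation*}
\Bigl|\int_\R u_{N_1}u_{N_2}u_{N_3}\,dx\Bigr| \lesssim N_3^{1/2}\|u_{N_1}\|_{L^2}\|u_{N_2}\|_{L^2}\|u_{N_3}\|_{L^2},
\end{equation*}
and a Cauchy--Schwarz in $N_3$ (splitting at $N_3=1$) yields
$\sum_{N_3\le N_2}N_3^{1/2}\|u_{N_3}\|_{L^2}\lesssim N_2^{(1/2-\widetilde{s}_\alpha)_+}\|u\|_{H^{\widetilde{s}_\alpha}}$,
with a harmless $\varepsilon$-adjustment at the borderline $\widetilde{s}_\alpha=1/2$. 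Another Cauchy--Schwarz in the frequency-locked sum $N_1\sim N_2\gtrsim N_0$ then produces the factor $\|u\|_{H^s}^2$ along with the overall decay
\begin{equation*}
|\mathcal{E}_3(t)|\lesssim N_0^{(1/2-\widetilde{s}_\alpha)_+-\alpha}\|u\|_{H^{\widetilde{s}_\alpha}}\|u\|_{H^s}^2.
\end{equation*}

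The conclusion reduces to checking that $(\tfrac12-\widetilde{s}_\alpha)_+-\alpha\le -\alpha/2$, so that under the hypothesis $N_0\ge C_1(1+\|u\|_{H^{\widetilde{s}_\alpha}})^{2/\alpha}$ the prefactor $N_0^{-\alpha/2}\|u\|_{H^{\widetilde{s}_\alpha}}$ is bounded by $C_1^{-\alpha/2}$, which can be made arbitrarily small by choosing $C_1$ large. Inspecting the three regimes of \eqref{cond-stilde}, the exponent equals $-3\alpha/4$ when $\alpha\in[2/3,1]$, $-1/2$ when $\alpha\in[1/2,2/3]$ (where $\widetilde{s}_\alpha=1-\alpha$), and $-\alpha$ when $\alpha\in(0,1/2]$ (where one first checks that $\widetilde{s}_\alpha\ge 1/2$); in every case this exponent is at most $-\alpha/2$. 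The main technical point is the regime $\alpha\in[2/3,1]$, where the Bernstein loss $N_2^{\alpha/4}$ stemming from the small $N_3$ frequencies competes closely with the gain $N_2^{-\alpha}$ from the cubic resonance, and the borderline inequality is precisely what fixes the threshold $\widetilde{s}_\alpha=\tfrac12-\tfrac\alpha4$ in this range.
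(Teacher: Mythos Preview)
Your proposal is correct and follows essentially the same approach as the paper's proof: both reduce to bounding $\mathcal{E}_3$, invoke the symbol estimate $|b_3|\lesssim N_{max}^{2s-\alpha}$, apply a Littlewood--Paley decomposition with Bernstein on the minimal frequency, and conclude via the exponent bound $N_0^{-\alpha}+N_0^{\frac12-\alpha-\widetilde{s}_\alpha}\lesssim N_0^{-\alpha/2}$. Your write-up is in fact somewhat more explicit than the paper's in the Cauchy--Schwarz bookkeeping and in the case analysis of the exponent across the three regimes of $\widetilde{s}_\alpha$.
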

\begin{proof}
From the definition in \eqref{def:Es} we infer
$$
\left| E^s(t) - \frac 12\|u(t)\|_{H^s}^2\right|\lesssim |\mathcal{E}_3(t)| \lesssim \sum_{N_1,N_2,N_3 \ge N_0} N_{max}^{2s-\alpha} \int_{\Gamma^3} \prod_{i=1}^3 \left| \widehat{u}_{N_i}(\xi_i) \right|.
$$
It follows then applying Bernstein in space that
$$
\left| E^s(t) - \frac 12\|u(t)\|_{H^s}^2\right| \lesssim (N_0^{-\alpha} + N_0^{\frac 12-\alpha-\widetilde{s}_\alpha} ) \|u\|_{L^\infty_TH^{\widetilde{s}_\alpha}_x} \|u(t)\|_{H^s}^2.
$$
Thus we complete the proof of lemma since $N_0^{-\alpha} + N_0^{\frac 12-\alpha-\widetilde{s}_\alpha} \lesssim N_0^{-\frac{\alpha}2}$.
\end{proof}
\begin{proposition}\label{prop:Es}
  There exists $C_2>0$ and $\gamma>0$ such that the following holds true. Let $\widetilde{s}_\alpha<s'\le s \le 3$, $0<T\le 1$ and $u\in Y^s_T$ be a solution of \eqref{fKdV} on $[0,T]$. Then,  \begin{equation}\label{est:Es}
    \sup_{t\in (0,T)} |E^s(t)| \le |E^s(0)| + C_2TN_0^{\frac 32}\|u\|_{L^\infty_TL^2_x}\|u\|_{L^\infty_TH^s_x}^2 + C_2N_0^{-\gamma} \|u\|_{\widetilde{Y}^{s'}_T}^2\|u\|_{\widetilde{Y}^s_T}^2 .
  \end{equation}
\end{proposition}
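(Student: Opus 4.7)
The overall strategy is to combine the identity \eqref{e2tilde} for $\mathcal{E}_2(t)$ with the fundamental theorem of calculus applied to $c\mathcal{E}_3(t)$, in such a way that the cubic integral $\mathcal{I}(t)$ is exactly cancelled by the linear part of $\tfrac{d}{dt}\mathcal{E}_3$. To see this cancellation, I would differentiate $\mathcal{E}_3$, use the total symmetry of $b_3$ to distribute the time derivative evenly across the three factors of $\widehat{u}$, and substitute $\partial_t\widehat{u}(\xi)=i\omega_{\alpha+1}(\xi)\widehat{u}(\xi)+i\xi\widehat{u^2}(\xi)$. Since $\omega_{\alpha+1}$ is odd, on $\Gamma^3$ one has $\sum_i\omega_{\alpha+1}(\xi_i)=-\Omega_3$, so the linear contribution to $\tfrac{d}{dt}\mathcal{E}_3$ collapses (up to sign) to $\int_{\Gamma^3} b_3\Omega_3\prod\widehat{u}_i = \int_{\Gamma^3} a_3\prod\widehat{u}_i$, which by the choice of $c$ fixed in \eqref{def:I}--\eqref{def:E3} balances $\mathcal{I}'(t)$ pointwise in $t$. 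After time-integration this leaves
\[
 E^s(t) = E^s(0) + \mathcal{I}_0(t) + \mathcal{J}(t),
\]
where $\mathcal{J}$ gathers the quartic contribution produced by inserting the nonlinear term $i\xi\widehat{u^2}(\xi)$ into $\tfrac{d}{dt}\mathcal{E}_3$. Fully symmetrising in the four resulting integration variables, $\mathcal{J}(t)$ rewrites as
\[
 \mathcal{J}(t) = i\int_0^t\!\!\int_{\Gamma^4} a_4(\xi_1,\xi_2,\xi_3)\,\prod_{i=1}^4\widehat{u}(t',\xi_i)\,d\xi_1d\xi_2d\xi_3\,dt',
\]
for a quartic symbol $a_4$ built from $b_3$ by the same symmetrisation procedure sketched in the introduction for the difference equation (here in the special case $u=v=z/2$).

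The low-frequency term $\mathcal{I}_0(t)$ is handled by the elementary bound \eqref{est:I0} and yields the $TN_0^{3/2}$-term of \eqref{est:Es}. For $\mathcal{J}(t)$, the plan is to perform a homogeneous dyadic decomposition in space on each of the four factors and simultaneously introduce the localisations $\phi_{M_j}(\xi_{i_1}+\xi_{i_2})$ controlling the cubic resonance function via the crucial factorisation $|\Omega_4|\sim M_{min}M_{med}N_{max}^{\alpha-1}$. In each resulting frequency region, I would combine the pointwise bounds on $a_4$ established in Appendix~\ref{app:symbols} with the $L^2$-trilinear estimate \eqref{l2tri.1} of Proposition~\ref{prop:l2tri}, except in the mildly resonant pocket $N_{min}\ll N_{thd}\sim N_{sub}\sim N_{max}$, where \eqref{l2tri.1} no longer suffices and one relies instead on the Strichartz-based bound \eqref{l2tri.3} of Proposition~\ref{prop:l2tri.2}. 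Placing the two highest-frequency factors in $\widetilde{Y}^s_T$ and the two lowest in $\widetilde{Y}^{s'}_T$, and using a frequency-localised embedding $\|P_Nu\|_{Z^0}\lesssim \cro{N}^{-s}\|u\|_{\widetilde{Y}^s_T}$, the dyadic sums converge as soon as $s\ge s'>\widetilde{s}_\alpha$. The extra factor $N_0^{-\gamma}$ arises from the indicator $\mathbf{1}_{|\xi|\ge N_0}$ that $a_4$ inherits from $a_3$: since $s'$ is strictly above $\widetilde{s}_\alpha$, there is a small surplus in the dyadic summation that can be traded against a negative power of $N_0$ on any frequency forced to be $\ge N_0$.

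The main obstacle lies in the pointwise analysis of $a_4$ in the two almost-resonant regions singled out in the introduction: the fully resonant one $N_1\sim N_2\sim N_3\sim N_4$ with $M_{min}\sim M_{med}\ll N_{max}$ (Region A), and the mildly resonant one $N_{min}\ll N_{thd}\sim N_{sub}\sim N_{max}$ (Region C). In both regions a crude bound on $a_4$ is insufficient, and one has to extract second-order cancellations between the three summands defining the symmetrised symbol, in the spirit of the explicit formula for $\widetilde{a}_4$ displayed in the introduction. These two regions are precisely the ones that saturate, and hence pin down, the threshold $\widetilde{s}_\alpha$ in \eqref{cond-stilde}. Once these pointwise bounds on $a_4$ are at hand, summing the dyadic contributions produces exactly the bound \eqref{est:Es}.
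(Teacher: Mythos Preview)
Your overall strategy matches the paper's: cancel $\mathcal{I}(t)$ against the linear part of $\tfrac{d}{dt}\mathcal{E}_3$, leaving the quartic term $\mathcal{J}(t)$ with fully symmetrised symbol $a_4$, then estimate $\mathcal{J}$ via dyadic decomposition, the symbol bound of Proposition~\ref{est-a4.0}, and the trilinear estimates of Section~\ref{Sec:3}.

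However, several specifics you give are imported from the \emph{difference} estimate (Proposition~\ref{prop:E_tilde_sigma}) and are incorrect here. First, the paper's proof of Proposition~\ref{prop:Es} never uses Proposition~\ref{prop:l2tri.2}: all three high-modulation regions A, B, C are handled with Proposition~\ref{prop:l2tri} alone (with $\theta=1$ in A and B, and $\theta=\min\{1,(\tfrac{\alpha}{1-\alpha})_-\}$ in C). In fact Proposition~\ref{prop:l2tri.2} is not even applicable in the region $N_{min}\ll N_{thd}\sim N_{sub}\sim N_{max}$ you single out, since its hypothesis requires $N_{thd}\ll N_{max}$; that proposition enters only in subregion $C_3$ of the difference estimate. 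Second, the threshold $1-\tfrac{3\alpha}{4}$ you quote is part of $s_\alpha$, relevant to the difference estimate; the regions in the present proof yield only $s'>1-\alpha$ and, for $\alpha\le\tfrac12$, $s'>\tfrac32-\tfrac{\alpha}{1-\alpha}$, consistent with the weaker threshold $\widetilde{s}_\alpha$ (the remaining piece $\tfrac12-\tfrac{\alpha}{4}$ of $\widetilde{s}_\alpha$ comes from the Strichartz estimate in Proposition~\ref{propse}, not from $\mathcal{J}$). Third, you omit the low/high modulation split $\Omega\ll N^\alpha$ versus $\Omega\gtrsim N^\alpha$ that the paper performs before invoking Proposition~\ref{prop:l2tri}: the low-modulation piece is handled directly by \eqref{l2tri-basic}, using that the symbol bound \eqref{est-a4.1} already contains the decisive factor $M_{min}M_{med}$.
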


\begin{remark}
The index $\widetilde{s}_{\alpha}$ defined in \eqref{cond-stilde} is given by
\[ \widetilde{s}_\alpha = \max\left\{\frac12-\frac{\alpha}4,1-\alpha, \frac32-\frac{\alpha}{1-\alpha}\right\} . \]
The restriction  $\widetilde{s}_\alpha \ge \max\big\{1-\alpha, \frac32-\frac{\alpha}{1-\alpha}\big\}$  is required to control the estimates in the regions $A$, $B$ and $C$ of the proof of Proposition \ref{prop:Es} below.  Meanwhile the restriction on $\widetilde{s}_\alpha \ge \frac12-\frac{\alpha}4$ in Lemma \ref{apriori:smooth} arises from  Corollary \ref{prop:yst} and more precisely from the Strichartz estimates in Proposition \ref{propse}.
\end{remark}

\begin{proof}
We consider the time extensions $\widetilde{u} = \rho_T(u)$ of $u$  (see \eqref{defrho}) and use \eqref{est:rho} and \eqref{est:yst} at the end of each step to estimate $\|\rho_{T}(u)\|_{Y^s} \lesssim\|u\|_{\widetilde{Y}^s_T} $.  For the sake of simplicity, we will denote $\widetilde{u}$ by $u$ in the rest of the proof. 

 Using again equation \eqref{fKdV}, we differentiate in time $\mathcal{E}_3(t)$ and integrate between $0$ and $t$ to find
\begin{align*}
\mathcal{E}_3(t) + \mathcal{I}(t) &= \mathcal{E}_3(0) + \sum_{j=1}^3 \int_{\Gamma^3_t} b_3(\xi_1,\xi_2,\xi_3) \widehat{\partial_x(u^2)}(\xi_j) \prod_{i\neq j}\widehat{u}(\xi_i) \\
&= \mathcal{E}_3(0) + 3\int_{\Gamma^3_t}b_3(\xi_1,\xi_2,\xi_3) \widehat{u}(\xi_1) \widehat{u}(\xi_2) \widehat{\partial_x(u^2)}(\xi_3) \\
&= \mathcal{E}_3(0) -3i\int_{\Gamma^4_t} (\xi_1+\xi_2)b_3(\xi_1,\xi_2) \prod_{i=1}^4 \widehat{u}(\xi_i),
\end{align*}
where in the last inequality we omit for simplicity the third variable in $b_3$ (ie: $b_3(\xi_1,\xi_2) = b_3(\xi_1,\xi_2,-\xi_1-\xi_2)$). Next we symmetrize again to obtain
$$
\mathcal{E}_3(t) + \mathcal{I}(t) = \mathcal{E}_3(0) -\frac i2 \int_{\Gamma^4_t} a_4(\xi_1,\xi_2,\xi_3,\xi_4) \prod_{i=1}^4 \widehat{u}(\xi_i)
$$
where $a_4$ is defined on $\Gamma^4$ by
$$
a_4(\xi_1,\xi_2,\xi_3,\xi_4) = \sum_{1 \le i<j \le 4} (\xi_i+\xi_j)b_3(\xi_i,\xi_j),
$$
so that $a_4$ is symmetric: $a_4(\xi_1,\xi_2,\xi_3,\xi_4) = a_4(\xi_{\sigma(1)}, \xi_{\sigma(2)}, \xi_{\sigma(3)}, \xi_{\sigma(4)})$ for any permutation $\sigma\in \mathcal{S}_4$. 

Gathering this with \eqref{e2tilde} and \eqref{est:I0} we infer
$$
\sup_{t\in[0,T]}|E^s(t)| \lesssim |E^s(0)| + T N_0^{\frac 32}\|u\|_{L^\infty_TL^2_x}\|u\|_{L^\infty_TH^s_x}^2 + \sup_{t\in[0,T]} |\mathcal{J}(t)|
$$
where 
\begin{equation} \label{def:J}
\mathcal{J}(t) =  \int_{\Gamma^4_t} a_4(\xi_1,\xi_2,\xi_3,\xi_4) \prod_{i=1}^4 \widehat{u}(\xi_i)
\end{equation}

We localize the variables $|\xi_i|\sim N_i$ in the definition of $\mathcal{J}$, and we assume $N_1\le N_2\le N_3\le N_4$ by symmetry. From the definition of $a_4$, the integrand in $\mathcal{J}$ vanishes unless $N_4\gtrsim N_0$. Therefore, it suffices to estimate for a fixed $N\gtrsim N_0$
$$
\mathcal{J}_N(t) = \sum_{N_1\le N_2\le N} \sum_{M_1, M_2, M_3} \int_{\R_t} \Pi_\chi(u_{N_1}, u_{N_2}, u_{N}) u_{\sim N} dxdt
$$ 
where 
$$
\chi(\xi_1,\xi_2,\xi_3,\xi_4) = \phi_{M_1}(\xi_2+\xi_3)\phi_{M_2}(\xi_1+\xi_3)\phi_{M_3}(\xi_1+\xi_2) a_4(\xi_1,\xi_2,\xi_3,\xi_4).
$$
To complete the proof of \eqref{est:Es}, it remains to show that for all $t\in (0,T)$, we have
$$
|\mathcal{J}_N(t)| \lesssim N^{-\gamma} \|u\|_{Y^{s'}}^2\|u\|_{Y^s}^2.
$$
Consider the decomposition $\mathcal{J}_N = \mathcal{J}_N^{low} + \mathcal{J}_N^{high}$ where $\mathcal{J}_N^{low}$ denotes the contribution of the low modulation frequencies $\Omega:= M_{min}M_{med}N^{\alpha-1} \ll N^\alpha$, and $\mathcal{J}_N^{high}$ is the high modulation part $\Omega\gtrsim N^\alpha$. Hence, we may use Proposition \ref{prop:l2tri} to evaluate $\mathcal{J}_N^{high}$.
 
From Proposition \ref{est-a4.0} the symbol $\chi$ is bounded by
\begin{equation}\label{est-a4}
  \|\chi\|_{L^\infty} \lesssim M_{min} M_{med}(N_2^{2s-\alpha}\vee N^{2s-\alpha}) N^{-1}.
\end{equation}
Combined with \eqref{l2tri-basic}, we infer, for $s>s'>(1-\alpha)/2 $,
\begin{align*}
  |\mathcal{J}_N^{low}(t)| &\lesssim \sum_{N_1\le N_2\le N} (N_1N_2)^{\frac 12} (N_2^{2s-\alpha} \vee N^{2s-\alpha}) \|u_{N_1}\|_{Y^0} \|u_{N_2}\|_{Y^0} \|u_{\sim N}\|_{Y^0}^2 \\
  &\lesssim \sum_{N_1\le N_2\le N} N_1^{\frac 12}\cro{N_1}^{-s'} \left(N_2^{2s+\frac 12-\alpha}\cro{N_2}^{-s'}N^{-2s} + N_2^{\frac 12}\cro{N_2}^{-s'}N^{-\alpha}\right) \|u\|_{Y^{s'}}^2\|u\|_{Y^s}^2\\
  &\lesssim N^{\max(1-\alpha-s', -\alpha, -2s)}  \|u\|_{Y^{s'}}^2\|u\|_{Y^s}^2.
\end{align*}

To deal with $\mathcal{J}_N^{high}$, let us split the summation in $N_1,N_2$ into 3 regions:
\begin{enumerate}
  \item[$\bullet$] \textbf{Region A}: $N_1\sim N_2\sim N$,  
  \item[$\bullet$] \textbf{Region B}: $N_1\ll N_2\sim N$, 
  \item[$\bullet$] \textbf{Region C}: $N_1\le N_2\ll N$,
\end{enumerate} 
leading to the decomposition $\mathcal{J}_N^{high} = \mathcal{J}_{N,A}^{high} +\mathcal{J}_{N,B}^{high} + \mathcal{J}_{N,C}^{high}$.

\vskip .3cm
\noindent
 \textbf{Region A}: $N_1\sim N_2\sim N$ .\\
 In this region we have the bound $\|\chi\|_{L^\infty} \lesssim M_{min}M_{med}N^{2s-1-\alpha}$. Thus, Proposition \ref{prop:l2tri} leads to
 \begin{align*}
   |\mathcal{J}_{N,A}^{high}(t)| &\lesssim \sum_{M_1,M_2,M_3\lesssim N} M_{min}N(M_{min}M_{med}N^{\alpha-1})^{-1} M_{min}M_{med}N^{2s-1-\alpha} \|u_{\sim N}\|_{Y^0}^4 \\
&\lesssim N^{0_+} N^{-2s'+2-2\alpha} \|u\|_{Y^{s'}}^2\|u\|_{Y^s}^2 \\
&\lesssim N^{0_-}  \|u\|_{Y^{s'}}^2\|u\|_{Y^s}^2
\end{align*}
 as soon as $s'>1-\alpha$.

\vskip .3cm  
\noindent
\textbf{Region B}: $N_1\ll N_2\sim N$.\\
 In this region we have $M_{min}\sim M_{med} \sim M_{max} \sim N$, so that $\|\chi\|_{L^\infty} \lesssim N^{2s+1-\alpha}$ and $\Omega\sim N^{1+\alpha}$. It follows from \eqref{l2tri.1} that
\begin{align*}
  |\mathcal{J}_{N,B}^{high}(t)| &\lesssim \sum_{N_1\ll N} (N_1N)^{\frac 12} N N^{-1-\alpha} N^{2s+1-\alpha} \|u_{N_1}\|_{Y^0} \|u_{\sim N}\|_{Y^0}^3  \\
  &\lesssim  \sum_{N_1\ll N} N_1^{\frac 12}\cro{N_1}^{-s'} N^{-s'+\frac 32-2\alpha}  \|u\|_{Y^{s'}}^2\|u\|_{Y^s}^2,
\end{align*}
which is acceptable for $s'>\max\{1-\alpha,\frac32-2\alpha\}$.

\vskip .3cm
\noindent
 \textbf{Region C}: $N_1\le N_2\ll N$.\\
 From \eqref{est-a4}, it holds that $\|\chi\|_{L^\infty} \lesssim M_3(N_2^{2s-\alpha}\vee N^{2s-\alpha})$. Moreover, $\Omega \sim M_3N^{\alpha}$, and since we are in the high modulation region, we must have $M_3\gtrsim 1$  and thus $ N_2\gtrsim 1$.
 Assume first that $\alpha> \frac 12$. Using \eqref{l2tri.1}, we bound $ |\mathcal{J}_{N,C}^{high}(t)|$ by
 \begin{align*}
   \sum_{N_1\le N_2\ll N}& \sum_{1\lesssim M_3\lesssim N_2} (N_1N_2)^{\frac 12} N(M_3N^\alpha)^{-1} M_3(N_2^{2s-\alpha} + N^{2s-\alpha}) \|u_{N_1}\|_{Y^0} \|u_{N_2}\|_{Y^0} \|u_{\sim N}\|_{Y^0}^2\\
   &\lesssim N^{0_-} \sum_{N_1\le N_2\ll N\atop N_2\gtrsim 1} N_1^{\frac 12}\cro{N_1}^{-s'} N_2^{\frac 12-s'} N_2^{(1-2\alpha)_+}  \|u\|_{Y^{s'}}^2\|u\|_{Y^s}^2\\
   &\lesssim N^{0_-} \|u\|_{Y^{s'}}^2\|u\|_{Y^s}^2, 
 \end{align*} 
 where we used in the last step that $s' >1-\alpha$. Now we turn to the case $\alpha\le \frac 12$. Note that $s'>\frac 32-\frac{\alpha}{1-\alpha} \ge \frac 12\ge \frac{\alpha}2$. Applying \eqref{l2tri.1} with $\theta = \left(\frac{\alpha}{1-\alpha}\right)_-\in (0,1)$ we estimate $ |\mathcal{J}_{N,C}^{high}(t)|$ by
 \begin{align*}
   \sum_{N_1\le N_2\ll N} & \sum_{1\lesssim M_3\lesssim N_2} (N_1N_2)^{\frac 12} (N(M_3N^\alpha)^{-1})^{\theta} M_3N^{2s-\alpha} \|u_{N_1}\|_{Y^0} \|u_{N_2}\|_{Y^0} \|u_{\sim N}\|_{Y^0}^2\\
   &\lesssim N^{0_-} \sum_{N_1\le N_2} \sum_{1\lesssim M_3\lesssim N_2} N_1^{\frac 12}\cro{N_1}^{-s'} N_2^{\frac 12-s'} M_3^{1-\theta}  \|u\|_{Y^{s'}}^2\|u\|_{Y^s}^2\\
   &\lesssim N^{0_-} \sum_{N_1\le N_2\atop N_2\gtrsim 1} N_1^{\frac 12}\cro{N_1}^{-s'} N_2^{\frac 32-\theta-s'}  \|u\|_{Y^{s'}}^2\|u\|_{Y^s}^2,
 \end{align*}
 which is acceptable since $s'>\frac 32-\theta$.
\end{proof}

Now, for a dyadic number $N_0\gg 1$, we define the high frequency energy of a solution $u$ by
\begin{equation}\label{def:Eshigh}
E^s_{>N_0}(t) = E^s_{>N_0}(t,N_0,u) = \mathcal{E}_2(t,\Pi_{>N_0}u) + \frac13\mathcal{E}_3(t,N_0,u) ,
\end{equation}
where 
$$
 \mathcal{E}_2(t,\Pi_{>N_0}u)= \frac 12  \|D_x^s\Pi_{>N_0}u(t)\|_{L^2_x}^2 
$$
and $\mathcal{E}_3(t,N_0,u)$ is defined exactly as in \eqref{def:E3}. Recall here the notation $\Pi_{>N}u=\mathcal{F}^{-1}_x\big({\bf 1}_{|\xi| > N}\mathcal{F}_xu\big)$. As a byproduct of the proofs of Lemma \ref{lemma:coer} and Proposition \ref{prop:Es}, we obtain the following estimates on $\mathcal{E}_3(t,N_0,u)$. 

\begin{corollary} \label{coro:Eshigh}
There exists $C_2>0$ and $\gamma>0$ such that the following holds true. Let $\widetilde{s}_\alpha<s'\le s \le 3$, $0<T\le 1$ and $u\in Y^s_T$ be a solution of \eqref{fKdV} on $[0,T]$. Let $N_0$ be a dyadic number satisfying $N_0\ge  C_1 (1+\|u\|_{L^\infty_TH^{\widetilde{s}_\alpha}})^{\frac 2 \alpha}$, where $C_1$ is defined in Lemma \ref{lemma:coer}. 
\begin{itemize}
\item[(i)] For any $t\in (0,T)$   
\begin{equation}\label{coro:Eshigh.1}
    \left| E^s_{>N_0}(t,N_0,u) - \frac 12\|D^s_x\Pi_{>N_0}u(t)\|_{L^2}^2\right|\le \frac 14\|D^s_x\Pi_{>N_0}u(t)\|_{L^2}^2.
  \end{equation}
  \item[(ii)]   \begin{equation}\label{coro:Eshigh.2}
    \sup_{t\in (0,T)} \left|E^s_{>N_0}(t)-E^s_{>N_0}(0)\right| \leq C_2 N_0^{-\gamma} \|u\|_{\widetilde{Y}^{s'}_T}^2\|u\|_{\widetilde{Y}^s_T}^2 .
  \end{equation}
  \end{itemize}
\end{corollary}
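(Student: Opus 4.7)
The plan is to adapt the proofs of Lemma \ref{lemma:coer} and Proposition \ref{prop:Es} by exploiting the structure of $E^s_{>N_0}$, whose quadratic part only contains the high-frequency piece $\|D^s_x\Pi_{>N_0}u\|_{L^2}^2$ and no $L^2$-term.

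\emph{Part (i): coercivity.} By the very definition \eqref{def:Eshigh},
\[
E^s_{>N_0}(t) - \tfrac 12\|D^s_x\Pi_{>N_0}u(t)\|_{L^2}^2 = \tfrac 13 \mathcal{E}_3(t,N_0,u).
\]
Since the symbol $b_3$ is supported in $\{|\xi_i|\ge N_0, \; i=1,2,3\}$, one has $\mathcal{E}_3(t,N_0,u)=\mathcal{E}_3(t,N_0,\Pi_{>N_0}u)$. Repeating verbatim the argument of Lemma \ref{lemma:coer} with $u$ replaced by $\Pi_{>N_0}u$ yields
\[
|\mathcal{E}_3(t,N_0,u)| \lesssim N_0^{-\frac{\alpha}{2}} \|u\|_{L^\infty_T H^{\widetilde{s}_\alpha}} \, \|D^s_x\Pi_{>N_0}u(t)\|_{L^2}^2 ,
\]
and the condition $N_0\ge C_1 (1+\|u\|_{L^\infty_T H^{\widetilde{s}_\alpha}})^{\frac{2}{\alpha}}$ (for $C_1$ possibly enlarged) gives \eqref{coro:Eshigh.1}.

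\emph{Part (ii): energy increment.} I would mimic the computation leading to \eqref{est:Es}, the only change being in the quadratic contribution. A direct differentiation using \eqref{fKdV} and the skew-symmetry of $L_{\alpha+1}$ gives
\[
\frac{d}{dt}\mathcal{E}_2(t,\Pi_{>N_0}u) = \int_{\R} D^{2s}_x\Pi_{>N_0}u \cdot \partial_x(u^2)\,dx = -i\int_{\Gamma^3}\xi_1|\xi_1|^{2s}{\bf 1}_{|\xi_1|>N_0}\,\widehat{u}(\xi_1)\widehat{u}(\xi_2)\widehat{u}(\xi_3),
\]
which after symmetrization is exactly $\mathcal{I}(t)'$ in the notation of \eqref{def:I} (the measure-zero boundary $|\xi_1|=N_0$ is harmless). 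Crucially, the low-frequency trilinear term $\mathcal{I}_0$ of \eqref{e2tilde} is absent here. Differentiating $\tfrac 13 \mathcal{E}_3(t,N_0,u)$ via \eqref{fKdV} and symmetrizing exactly as in the proof of Proposition \ref{prop:Es} yields $-\mathcal{I}(t) - \tfrac 12 \mathcal{J}(t)$ after integration in time, with $\mathcal{J}$ as in \eqref{def:J}. Therefore
\[
E^s_{>N_0}(t) - E^s_{>N_0}(0) = -\tfrac 12 \mathcal{J}(t).
\]

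It then remains to bound $|\mathcal{J}(t)|$, and this is exactly the step already carried out in the proof of Proposition \ref{prop:Es}, decomposing $\mathcal{J}=\sum_{N\gtrsim N_0}\mathcal{J}_N$, splitting each $\mathcal{J}_N$ into low/high modulation and treating Regions $A$, $B$, $C$ via Proposition \ref{prop:l2tri} together with the symbol estimate \eqref{est-a4}. The outcome is precisely the bound $C_2 N_0^{-\gamma}\|u\|_{\widetilde{Y}^{s'}_T}^2\|u\|_{\widetilde{Y}^s_T}^2$ from \eqref{est:Es}, which gives \eqref{coro:Eshigh.2}. No new obstacle arises, the main observation being simply that replacing $\frac 12\|u\|_{L^2}^2+\frac 12\|D^s_xu\|^2_{L^2}$ by $\frac 12\|D^s_x\Pi_{>N_0}u\|^2_{L^2}$ eliminates the low-frequency contribution $\mathcal{I}_0$ responsible for the $TN_0^{3/2}$ term in \eqref{est:Es}, which is precisely why \eqref{coro:Eshigh.2} sharpens \eqref{est:Es}.
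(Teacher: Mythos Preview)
Your Part (ii) is correct and matches the paper's proof: the key point, which you identified, is that the quadratic part of $E^s_{>N_0}$ produces only $\mathcal{I}(t)$ and not $\mathcal{I}_0(t)$, so after the cancellation with $\tfrac13\mathcal{E}_3$ one is left with (a constant multiple of) $\mathcal{J}(t)$, which is then bounded exactly as in Proposition \ref{prop:Es}.

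Part (i), however, contains a genuine error. You claim that $b_3$ is supported in $\{|\xi_i|\ge N_0,\ i=1,2,3\}$, and hence that $\mathcal{E}_3(t,N_0,u)=\mathcal{E}_3(t,N_0,\Pi_{>N_0}u)$. This is false: $a_3(\xi_1,\xi_2,\xi_3)=\sum_i \xi_i|\xi_i|^{2s}{\bf 1}_{|\xi_i|\ge N_0}$ only requires that \emph{at least one} $|\xi_i|\ge N_0$. On $\Gamma^3$ this forces the two largest in absolute value to be $\gtrsim N_0$, but the smallest one can be arbitrarily small, so replacing all three factors $\widehat{u}$ by $\widehat{\Pi_{>N_0}u}$ is not legitimate.

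The fix is to follow the proof of Lemma \ref{lemma:coer} more closely rather than invoke it as a black box. In that proof, after localizing $|\xi_i|\sim N_i$ with (say) $N_1\le N_2\le N_3$, the constraint $\xi_1+\xi_2+\xi_3=0$ forces $N_2\sim N_3\gtrsim N_0$, and the two factors of $\|u(t)\|_{H^s}$ are placed precisely on $u_{N_2}$ and $u_{N_3}$. Since both are supported at frequencies $\gtrsim N_0$, these two factors are controlled by $\|D^s_x\Pi_{>N_0}u(t)\|_{L^2}$ (up to the usual dyadic constants), while the $H^{\widetilde{s}_\alpha}$-factor on $u_{N_1}$ stays as $\|u\|_{L^\infty_TH^{\widetilde{s}_\alpha}}$. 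This yields the bound you stated and hence \eqref{coro:Eshigh.1}. This is what the paper means by ``exactly the same as the one of \eqref{coesive}''.
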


\begin{proof} The proof of \eqref{coro:Eshigh.1} is exactly the same as the one of \eqref{coesive}. 

To prove \eqref{coro:Eshigh.2}, we observe by using \eqref{fKdV} and arguing as above that
\begin{equation*}
\left|\int_0^t\frac{d}{dt}E^s_{>N_0}(t)\right| \lesssim |\mathcal{J}(t)| ,
\end{equation*}
where $\mathcal{J}(t)$ is defined in \eqref{def:J}. Then, we conclude the proof of  \eqref{coro:Eshigh.2} by estimating $\sup_{t \in (0,T)} |\mathcal{J}(t)|$ as in the proof of Proposition \ref{prop:Es}. 
\end{proof}

\section{Energy estimates for the difference of solutions} \label{Sec:5}
Let $0<T\le 1$ and $z, w$ be two solutions of \eqref{eqdiff} such that $z\in Y^s_T$ and $w\in \overline{Z}^\sigma_T$ with $s>s_\alpha$ and 
\begin{equation}\label{cond:sigma} 
-\frac 1 2 +\frac \alpha 4< \sigma<\min \left\{\frac{\alpha-1}2,s-\frac 32+ \alpha\right\}.
\end{equation}
Note that $-\frac 1 2 +\frac \alpha 4 < \min \{\frac{\alpha-1}2,s-\frac 32+ \alpha\}$ as soon as $s>1-\frac{3\alpha}4$. As a consequence, for $s>s_\alpha$ we can always choose $\sigma$ satisfying \eqref{cond:sigma}.

For fixed $N_0\gg 1$ let us define the modified energy $\widetilde{E}^\sigma(t) = \widetilde{E}^\sigma(t,N_0,z,w) $ by
\begin{equation}\label{def:esigma}
  \widetilde{E}^\sigma(t) = \frac 12\||\xi|^{-1}\widehat{w}(t,\xi)\|_{L^2(|\xi|\le 1)}^2 + \frac 12\||\xi|^{\sigma}\widehat{w}(t,\xi)\|_{L^2(|\xi| \ge 1)}^2+\frac 12 \widetilde{\mathcal{E}}^\sigma(t)
\end{equation}
where
$$
\widetilde{\mathcal{E}}^\sigma(t) = \int_{\Gamma^3}\widetilde{b}_3(\xi_1,\xi_2) \widehat{w}(t,\xi_1) \widehat{w}(t,\xi_2)\widehat{z}(t,\xi_3).
$$
To define the symbol $\widetilde{b}_3$ we first set
\begin{equation}\label{def:a3tilde}
\widetilde{a}_3(\xi_1,\xi_2) = (\widetilde{\nu}_\sigma(\xi_1) + \widetilde{\nu}_\sigma(\xi_2)){\bf 1}_{|\xi_1+\xi_2|>N_0},
\end{equation}
where $N_0 \gg \max\{1,\xi_0\}$ is a large dyadic number that will fixed later (here $\xi_0$ is defined in Hypothesis \ref{hyp1} so that Proposition \ref{est-Omega4} holds true),
\begin{equation} \label{def:tilde_nu}\widetilde{\nu}_\sigma(\xi)=\xi|\xi|^{2\sigma} {\bf 1}_{|\xi|\ge 1} . 
\end{equation} 
 and $\widetilde{b}_3$ is given by
$$
\widetilde{b}_3(\xi_1,\xi_2) = \frac{\widetilde{a}_3(\xi_1,\xi_2)}{\Omega_3(\xi_1,\xi_2)}.
$$
\begin{lemma} \label{coer:E_tilde_sigma}
  Assume that $s>s_\alpha$, $-\frac 1 2 +\frac \alpha 4< \sigma<\min \{\frac{\alpha-1}2,s-\frac 32+ \alpha\}$ and $0<T\le 1$. Let $z\in L^\infty_TH^s_x$ and $w\in L^\infty_T\overline{H}^\sigma_x$ be a solution of \eqref{eqdiff}. Then for any $t\in (0,T)$ and $N_0\gg (1+\|z\|_{L^\infty_TH^s})^2$, it holds
  \begin{equation}\label{coesive-diff}
    \left| \widetilde{E}^{\sigma}(t) - \frac 12\|w(t)\|_{\overline{H}^\sigma}^2\right|\le \frac 14 \|w(t)\|_{\overline{H}^\sigma}^2.
  \end{equation}
\end{lemma}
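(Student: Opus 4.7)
The first two terms of $\widetilde{E}^\sigma(t)$ add up exactly to $\tfrac12\|w(t)\|_{\overline{H}^\sigma}^2$, so \eqref{coesive-diff} reduces to proving $|\widetilde{\mathcal{E}}^\sigma(t)|\le\tfrac12\|w(t)\|_{\overline{H}^\sigma}^2$. My plan is to establish the stronger estimate
\[
|\widetilde{\mathcal{E}}^\sigma(t)|\;\lesssim\; N_0^{-\delta}\,\|z\|_{H^s}\,\|w\|_{\overline{H}^\sigma}^2
\]
for some exponent $\delta>\tfrac12$; the hypothesis $N_0\gg(1+\|z\|_{L^\infty_T H^s})^2$ then immediately yields \eqref{coesive-diff}.

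I would proceed by a homogeneous Littlewood--Paley decomposition of $w$ and $z$ and analyse the trilinear integral dyadically on $|\xi_i|\sim N_i$. Two structural constraints filter the relevant interactions: the cut-off ${\bf 1}_{|\xi_1+\xi_2|>N_0}$ forces $N_3\gtrsim N_0\gg 1$, while the vanishing of $\widetilde{\nu}_\sigma$ on $\{|\xi|<1\}$ implies $N_1\vee N_2\ge 1$. On each dyadic piece I would apply the standard convolution bound
\[
\int_{\Gamma^3}|\widehat{f_1}(\xi_1)\widehat{f_2}(\xi_2)\widehat{f_3}(\xi_3)|\;\lesssim\; N_{min}^{1/2}\prod_{i=1}^3\|f_i\|_{L^2},
\]
multiplied by the $L^\infty$-norm of $\widetilde{b}_3=\widetilde{a}_3/\Omega_3$ on the support, and renormalise via $\|P_Nw\|_{L^2}\sim N^{-\sigma}\|P_Nw\|_{\overline{H}^\sigma}$ for $N\ge 1$, $\|P_Nw\|_{L^2}\sim N\|P_Nw\|_{\overline{H}^\sigma}$ for $N<1$, together with $\|P_Nz\|_{L^2}\sim N^{-s}\|P_Nz\|_{H^s}$.

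The pointwise control of $\widetilde{b}_3$ is the heart of the argument. The resonance function satisfies $|\Omega_3|\gtrsim N_{max}^\alpha(N_{min}\vee 1)$ by the estimate recalled after \eqref{def:E3}. In the regions where the smallest frequency is $|\xi_1|$ or $|\xi_2|$ (one-low/two-high with $N_2\sim N_3$ or $N_1\sim N_3$), the crude bound $|\widetilde{a}_3|\lesssim N_{max}^{2\sigma+1}$ suffices: once combined with the trilinear inequality above and Cauchy--Schwarz in the dyadic indices (convergent since $\sigma>-\tfrac12$), each contribution sums to $N_0^{-\delta}\|z\|_{H^s}\|w\|_{\overline{H}^\sigma}^2$ with $\delta=s+\alpha-1-\sigma>\tfrac12$ thanks to $\sigma<s+\alpha-\tfrac32$. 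The main obstacle is the mildly resonant zone $N_1\sim N_2\sim N\gg N_3\gtrsim N_0$, in which only $|\Omega_3|\sim N^\alpha N_3$ is available and the crude bound on $\widetilde{a}_3$ would leave the divergent factor $N^{1-\alpha}/N_3$ for $\alpha<1$. To resolve this, I would exploit the cancellation
\[
\widetilde{\nu}_\sigma(\xi_1)+\widetilde{\nu}_\sigma(\xi_2)\;=\;f(\xi_1)-f(\xi_1+\xi_3),\qquad f(\xi):=\xi|\xi|^{2\sigma},
\]
valid on this zone because $|\xi_1|,|\xi_2|\sim N\gg 1$; the mean value theorem together with $|f'(\eta)|\sim N^{2\sigma}$ for $|\eta|\sim N$ (the constant $2\sigma+1$ being nonzero since $\sigma>-\tfrac12+\tfrac\alpha4$) then yields $|\widetilde{a}_3|\lesssim N^{2\sigma}N_3$, restoring $|\widetilde{b}_3|\lesssim N^{2\sigma-\alpha}$ and producing the prefactor $N_0^{-(s+\alpha-\frac12)}$.

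A case-by-case check of the three sub-ranges of $s_\alpha$ in \eqref{cond-s} shows that $s+\alpha-\tfrac12>\tfrac12$ throughout, so that $\delta>\tfrac12$ uniformly, and the condition $N_0\gg(1+\|z\|_{H^s})^2$ suffices to conclude \eqref{coesive-diff}.
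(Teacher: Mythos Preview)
Your approach is essentially the paper's: dyadic decomposition, the standard trilinear convolution bound, and the mean-value cancellation $|\widetilde{a}_3|\lesssim N_3 N^{2\sigma}$ in the zone $N_1\sim N_2\gg N_3$ (this is exactly Lemma~\ref{lem:b3tilde}, and the paper likewise ends with the decay $N_0^{-1/2}$ coming from $\sigma-s+1-\alpha<-\tfrac12$).

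One claim is incorrect, however: the lower bound $|\Omega_3|\gtrsim N_{max}^\alpha(N_{min}\vee 1)$ is false. Proposition~\ref{est-Omega3} only gives $|\Omega_3|\sim N_{min}N_{max}^\alpha$, with no floor at $1$; in your one-low/two-high case with $N_1=N_{min}<1$ one has genuinely $|\widetilde{b}_3|\lesssim N_1^{-1}N_{max}^{2\sigma+1-\alpha}$ (this is \eqref{lem:b3tilde.2}), not $N_{max}^{2\sigma+1-\alpha}$. Your final estimate survives because the low-frequency weight in $\overline{H}^\sigma$ absorbs the extra factor: $\|P_{N_1}w\|_{L^2}\sim N_1\|P_{N_1}w\|_{\overline{H}^\sigma}$ for $N_1<1$, so the combination $N_1^{-1}\cdot N_1^{1/2}\cdot N_1=N_1^{1/2}$ is still summable. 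So the fix is immediate, but the intermediate step as written does not hold. (A separate minor imprecision, shared with the paper: the first two quadratic terms of $\widetilde{E}^\sigma$ are only \emph{equivalent} to $\tfrac12\|w\|_{\overline{H}^\sigma}^2$, not exactly equal; the argument is really for the equivalent norm, which suffices for all applications.)
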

\begin{proof}
  Note that 
  \[ \|w(t)\|_{\overline{H}^\sigma}^2 \sim \||\xi|^{-1}\widehat{w}(t,\xi)\|_{L^2(|\xi|\le 1)}^2 + \||\xi|^{\sigma}\widehat{w}(t,\xi)\|_{L^2(|\xi| \ge 1)}^2.\]
 Then, from the definition of $\widetilde{E}^{\sigma}$ in \eqref{def:esigma} the left-hand side in \eqref{coesive-diff} is bounded by
  $$
  \frac 12|\widetilde{\mathcal{E}}^\sigma| \lesssim \sum_{N_1,N_2>0\atop N_3\gtrsim N_0} \left| \int_{\Gamma^3} \widetilde{b}_3(\xi_1,\xi_2) \widehat{w}_{N_1}(\xi_1) \widehat{w}_{N_2}(\xi_2)\widehat{z}_{N_3}(\xi_3) \right|
  $$
 where, for the sake of simplicity, we have removed the time dependency. By symmetry, we may assume that the sum is reduced to $N_1\le N_2$, so that $N_1\lesssim N_2\sim N_3$  and $N_3 \gtrsim N_0$. For the contribution $N_1\ll N_2$ we get from \eqref{lem:b3tilde.2} that 
   \begin{align*}
     & \sum_{N_1\ll N_2\atop N_2\gtrsim N_0} N_1^{-1}N_2^{2\sigma+1-\alpha} N_1^{\frac 12} \|w_{N_1}\|_{L^2_x} \|w_{N_2}\|_{L^2_x} \|z_{\sim N_2}\|_{L^2_x} \\
    &\lesssim \sum_{N_2\gtrsim N_0} N_2^{\sigma-s+1-\alpha} \|w\|_{\overline{H}^\sigma_x} \|w_{N_2}\|_{H^\sigma_x} \|z_{\sim N_2}\|_{H^s_x}\\
    &\lesssim N_0^{-\frac 12} \|w\|_{\overline{H}^\sigma_x}^2 \|z\|_{H^s_x}, 
  \end{align*}
  where we used $\sigma-s+1-\alpha<-\frac12$. In the same way using now \eqref{lem:b3tilde.1}, we bound the sum over $N_1\sim N_2$ by
  \begin{align*}
     & \sum_{N_1\gtrsim N_0} N_1^{2\sigma-\alpha+\frac 12} N_1^{-2\sigma-s} \|w_{N_1}\|_{H^\sigma_x} \|w_{\sim N_1}\|_{H^\sigma_x} \|z_{\sim N_1}\|_{H^s_x}\\
    &\lesssim N_0^{-\frac 12} \|w\|_{H^\sigma_x}^2 \|z\|_{H^s_x},
  \end{align*}
for $s>1-\alpha$. The claim follows then since $N_0^{-\frac 12}\|z\|_{H^s_x}\ll 1$ by our assumption on $N_0$.
\end{proof}
\begin{proposition} \label{prop:E_tilde_sigma}
  Assume $s>s_\alpha$, $-\frac 1 2 +\frac \alpha 4< \sigma<\min \{\frac{\alpha-1}2,s-\frac 32+ \alpha\}$ and $0<T\le 1$. Let $u, v\in Y^s_T$ be two solutions of \eqref{fKdV} such that $w=u-v\in \overline{Z}^\sigma_T$. Then, setting $z=u+v$, it holds that for any $N_0\gg 1$,
  \begin{equation}\label{est:Esigma}
    \sup_{t\in (0,T)} |\widetilde{E}^\sigma(t)| \lesssim |\widetilde{E}^\sigma(0)| + TN_0^{\frac 32} \|z\|_{Y^s_T} \|w\|_{\overline{Z}^\sigma_T}^2 +  N_0^{-\gamma}(\|u\|_{\widetilde{Y}^s_T}^2 + \|v\|_{\widetilde{Y}^s_T}^2) \|w\|_{\overline{Z}^\sigma_T}^2
  \end{equation}
  where $\gamma>0$ depends only on $\alpha$, $s$ and $\sigma$.
\end{proposition}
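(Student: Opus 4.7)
The proof plan is to differentiate $\widetilde E^\sigma(t)$ in time, use \eqref{eqdiff} for $w$ together with the identity $\partial_t z-L_{\alpha+1}z=\tfrac12\partial_x(z^2+w^2)$ (which follows from $u^2+v^2=\tfrac12(z^2+w^2)$), and then integrate from $0$ to $t$. Since $L_{\alpha+1}$ is skew-adjoint, the linear parts of the equations produce no contribution to the two purely quadratic terms of $\widetilde E^\sigma$, so only cubic and quartic integrals survive.

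The nonlinearity $\partial_x(zw)$ in \eqref{eqdiff} acting on the quadratic high-frequency term produces, after symmetrisation in $(\xi_1,\xi_2)$, a trilinear form $\tfrac{i}{2}\int_{\Gamma^3}(\widetilde\nu_\sigma(\xi_1)+\widetilde\nu_\sigma(\xi_2))\,\widehat w(\xi_1)\widehat w(\xi_2)\widehat z(\xi_3)$ (up to sign). Splitting the symbol via ${\bf 1}_{|\xi_1+\xi_2|>N_0}+{\bf 1}_{|\xi_1+\xi_2|\le N_0}$, the high-$|\xi_3|$ piece is exactly $\widetilde a_3/2$ and is cancelled by the linear contribution of $\tfrac12\frac{d}{dt}\widetilde{\mathcal E}^\sigma$: using oddness of $\omega_{\alpha+1}$ and $\xi_1+\xi_2+\xi_3=0$ on $\Gamma^3$, this linear contribution equals $-\tfrac{i}{2}\int_{\Gamma^3}\widetilde b_3\,\Omega_3\,\widehat w\widehat w\widehat z=-\tfrac{i}{2}\int_{\Gamma^3}\widetilde a_3\widehat w\widehat w\widehat z$. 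The leftover pieces (the $|\xi_3|\le N_0$ part of the high-frequency cubic, together with the cubic coming from the low-frequency quadratic $\tfrac12\||\xi|^{-1}\widehat w\|_{L^2(|\xi|\le 1)}^2$) all have $|\xi_j|\lesssim N_0$, so Plancherel combined with Bernstein's inequality controls their time integral by $CTN_0^{3/2}\|z\|_{Y^s_T}\|w\|_{\overline Z^\sigma_T}^2$, matching the second term of \eqref{est:Esigma}; the bound exploits $s>-\sigma$, which holds in the regime \eqref{cond:sigma}.

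The core task is then to estimate the quartic contributions produced by substituting the nonlinear parts of $\partial_t\widehat w$ (in positions $1$ and $2$ of $\widetilde{\mathcal E}^\sigma$) and of $\partial_t\widehat z$ (in position $3$). The $\partial_x(zw)$ substitutions in positions $1$ and $2$, together with $\tfrac12\partial_x(z^2)$ in position $3$, produce three quartic forms of type $w\cdot w\cdot z\cdot z$ which, after full symmetrisation in the two $w$-variables and in the two $z$-variables, combine into $\widetilde{\mathcal K}(t)$ with the symbol $\widetilde a_4$ introduced in the introduction. The $\tfrac12\partial_x(w^2)$ contribution in position $3$ yields a simpler pure-$w^4$ quartic integral with symbol $(\xi_3+\xi_4)\widetilde b_3(\xi_1,\xi_2)$; all four factors then carry the $\overline H^\sigma$ regularity and the same trilinear machinery applies. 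For $\widetilde{\mathcal K}$, I perform a Littlewood-Paley decomposition $|\xi_i|\sim N_i$, $|\xi_j+\xi_k|\sim M_l$ only after symmetrisation (in order to preserve the cancellations), split the resulting sum into Regions A, B, C as in the proof of Proposition \ref{prop:Es}, and in each region combine the pointwise bounds on $\widetilde a_4$ from Appendix \ref{app:symbols} with the $L^2$-trilinear estimate \eqref{l2tri.1} of Proposition \ref{prop:l2tri}, resorting to \eqref{l2tri.3} of Proposition \ref{prop:l2tri.2} in the Strichartz-type subregion $N_{min}\ll N_{thd}\sim N_{sub}\sim N_{max}$ of Region C. Since $\widetilde a_3$ forces $N_{max}\gtrsim N_0$, a careful summation under the constraints of \eqref{cond:sigma} and $s>s_\alpha$ produces the announced $N_0^{-\gamma}(\|u\|_{\widetilde Y^s_T}^2+\|v\|_{\widetilde Y^s_T}^2)\|w\|_{\overline Z^\sigma_T}^2$ bound.

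The principal obstacle is extracting enough cancellation from $\widetilde a_4$: the naive pointwise bound $\|\widetilde a_4\|_{L^\infty}\lesssim M_{min}M_{med}N_{max}^{2\sigma-\alpha}$ does not close, and one must exploit the first-order differences $\widetilde b_3(\xi_1+\xi_3,\xi_2)-\widetilde b_3(\xi_1+\xi_3,-\xi_1)$ built into the formula for $\widetilde a_4$, together with a second-order cancellation in the strongly resonant subregion $M_{min}\sim M_{med}\ll N_{max}$ of Region A. The range of $\sigma$ imposed in \eqref{cond:sigma} is precisely what balances these worst configurations — Region A at full resonance and the mildly resonant subregion of Region C with $N_{min}\ll N_{thd}\sim N_{sub}\sim N_{max}$ — and reproduces the threshold $s>s_\alpha$ at the level of the difference.
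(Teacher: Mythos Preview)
Your overall strategy matches the paper's proof, but two details need correction.

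First, Proposition \ref{prop:l2tri.2} is not invoked in the configuration $N_{min}\ll N_{thd}\sim N_{sub}\sim N_{max}$ --- that is Region $B$, where $M_{min}\sim N$ and hence $\Omega\sim N^{\alpha+1}$, so Proposition \ref{prop:l2tri} alone suffices. The Strichartz estimate is actually needed in subregion $C_3$ ($N_1\ll N_3\ll N_2\sim N_4$), and only after a nontrivial decomposition: the bound \eqref{a4tilde-C3b} is too weak on its own, so the paper uses \eqref{a4tilde-C3} to peel off from $\widetilde a_4$ the explicit piece $-\frac{\xi_3\widetilde\nu_\sigma(\xi_3)}{\xi_1\omega'_{\alpha+1}(\xi_3)}$, which factors as a genuine product (no pseudo-product operator) and is therefore the only piece amenable to Proposition \ref{prop:l2tri.2}. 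The remainder terms carry smaller symbols and are handled by Proposition \ref{prop:l2tri}. This extraction step is essential and is missing from your outline. (Incidentally, in Region $A$ the paper uses only the crude bound \eqref{a4tilde-A}, $|\widetilde a_4|\lesssim N^{2\sigma+1-\alpha}$, obtained term by term without any second-order cancellation.)

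Second, for the pure-$w^4$ term $\widetilde{\mathcal L}$, placing all four factors in $\overline Z^\sigma$ cannot close: since $\sigma<0$, the high-frequency factors would contribute growing powers of $N$. The paper instead exploits $w=u-v\in Y^s_T$ and estimates the two high-frequency factors in $Y^s$, obtaining $N^{0_-}\|w\|_{\overline Z^\sigma}^2\|w\|_{Y^s}^2\lesssim N^{0_-}\|w\|_{\overline Z^\sigma}^2(\|u\|_{Y^s}^2+\|v\|_{Y^s}^2)$, which is what produces the stated right-hand side of \eqref{est:Esigma}.
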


\begin{remark}
By looking at  the estimates in the regions $A$, $C_1$ and $C_4$ below, we see that the proof of Proposition \ref{prop:E_tilde_sigma} holds under the condition
\[ s>\max\left\{1-\frac{3\alpha}4,\frac32(1-\alpha), \frac32-\frac{\alpha}{1-\alpha}\right\}, \] 
which corresponds specifically to the index $s_{\alpha}$ defined in \eqref{cond-s}. 
\end{remark}

\begin{proof}
  We consider the time extensions $\widetilde{w} = \rho_T(w)$ and $\widetilde{z}=\rho_T(z)$ (see \eqref{defrho}) that we still denote $w$ and $z$ for simplicity. By using equation \eqref{eqdiff} we infer after differentiating in time and integrating from $0$ to $t$ with $0<t<T$,
\begin{align}
\widetilde{E}^\sigma(t) 
&= \widetilde{E}^\sigma(0) -i\int_{\Gamma^3_t} {\bf 1}_{|\xi_1|\le 1}\xi_1^{-1}\widehat{w}(\xi_1) \widehat{w}(\xi_2)\widehat{z}(\xi_3)  \notag \\
&\quad  -i \int_{\Gamma^3_t}\xi_1|\xi_1|^{2\sigma} {\bf 1}_{|\xi_1|\ge 1} \widehat{w}(\xi_1) \widehat{w}(\xi_2)\widehat{z}(\xi_3) +c\int_0^t \frac{d}{dt}\widetilde{\mathcal{E}}^{\sigma}(t')dt' \notag \\
&=: \widetilde{E}^\sigma(0) + \widetilde{\mathcal{I}}_0(t) + \widetilde{\mathcal{I}}(t) + \widetilde{\mathcal{J}}(t). \label{est:Esigma.1}
\end{align}
The low-frequency term is estimated by using $-\sigma<s$, so that $1 \lesssim \langle \xi_2 \rangle^{\sigma} \langle \xi_3 \rangle^{s} $. Thus,
\begin{equation}\label{est:Esigma.2}
|\widetilde{\mathcal{I}}_0(t)| \lesssim T \|P_{\lesssim 1}w\|_{L^\infty_T\overline{L^2_x}} \|w\|_{L^\infty_TH^\sigma_x} \|z\|_{L^\infty_TH^s_x}
\end{equation}
Next we symmetrize $\widetilde{\mathcal{I}}$ in $(\xi_1, \xi_2)$ by writing
$$
\widetilde{\mathcal{I}}(t) = -\frac i2 \int_{\Gamma^3_t} (\widetilde{\nu}_\sigma(\xi_1)+\widetilde{\nu}_\sigma(\xi_2))  \widehat{w}(\xi_1) \widehat{w}(\xi_2)\widehat{z}(\xi_3) ,
$$
where $\widetilde{\nu}_\sigma$ is defined in \eqref{def:a3tilde}.
We will only cancel out the region where $|\xi_3|>N_0$. Thus we split $\widetilde{\mathcal{I}}=\widetilde{\mathcal{I}}_1+\widetilde{\mathcal{I}}_2$ with
$$
\widetilde{\mathcal{I}}_2(t) = -\frac i2 \int_{\Gamma^3_t} \widetilde{a_3}(\xi_1,\xi_2)  \widehat{w}(\xi_1) \widehat{w}(\xi_2)\widehat{z}(\xi_3)
$$
where $\widetilde{a}_3$ is defined in \eqref{def:a3tilde}. The low $\xi_3$-frequencies term is easily bounded by
\begin{equation}\label{est:Esigma.3}
|\widetilde{\mathcal{I}}_1| \lesssim TN_0^{\frac 32} \|z\|_{Y^s_T} \|w\|_{\overline{Z}^\sigma_T}^2,
\end{equation}
where  we used that, in the region $|\xi_1|\sim |\xi_2|\gg N_0$, the mean value theorem implies
$$
|\widetilde{\nu}_\sigma(\xi_1)+\widetilde{\nu}_\sigma(\xi_2|\sim |\xi_1+\xi_2| |\xi_1|^{2\sigma}.
$$
Using again equation \eqref{eqdiff}, we deduce that $\widetilde{\mathcal{I}}_2(t) + \widetilde{\mathcal{J}}(t)$ is equal to
\begin{align}
  &2\int_{\Gamma^3_t} \widetilde{b_3}(\xi_1, \xi_2) \widehat{\partial_x(zw)}(\xi_1) \widehat{w}(\xi_2) \widehat{z}(\xi_3) + \int_{\Gamma^3_t} \widetilde{b_3}(\xi_1, \xi_2) \widehat{w}(\xi_1) \widehat{w}(\xi_2) \widehat{\partial_x(u^2+v^2)}(\xi_3) \notag \\
  &=2\int_{\Gamma^3_t} \widetilde{b_3}(\xi_1, \xi_2) \widehat{\partial_x(zw)}(\xi_1) \widehat{w}(\xi_2) \widehat{z}(\xi_3) + \frac 12\int_{\Gamma^3_t} \widetilde{b_3}(\xi_1, \xi_2) \widehat{w}(\xi_1) \widehat{w}(\xi_2) \widehat{\partial_x(z^2)}(\xi_3)\notag \\
  &\quad + \frac 12\int_{\Gamma^3_t} \widetilde{b_3}(\xi_1, \xi_2) \widehat{w}(\xi_1) \widehat{w}(\xi_2) \widehat{\partial_x(w^2)}(\xi_3) \notag \\
  &=: \widetilde{\mathcal{K}}_1(t) + \widetilde{\mathcal{K}}_2(t) + \widetilde{\mathcal{L}}(t) \label{est:Esigma.4}
\end{align}
In the rest of the proof, we will show the following estimates:
\begin{equation}\label{est:Esigma.5}
 |\widetilde{\mathcal{K}}_1(t) + \widetilde{\mathcal{K}}_2(t)| \lesssim N_0^{-\gamma} \|w\|_{\overline{Z}^\sigma}^2 \|z\|_{Y^s}^2,
\end{equation}
and
\begin{equation}\label{est:Esigma.6}
 |\widetilde{\mathcal{L}}(t)| \lesssim N_0^{-\gamma} \|w\|_{\overline{Z}^\sigma}^2 (\|u\|_{Y^s}^2 + \|v\|_{Y^s}^2),
\end{equation}
for some $\gamma>0$. Combined with \eqref{est:Esigma.1}--\eqref{est:Esigma.4}, this yields \eqref{est:Esigma}.

\vskip .5cm
\noindent
\textbf{Estimate of $\widetilde{\mathcal{K}} = \widetilde{\mathcal{K}}_1 + \widetilde{\mathcal{K}}_2$.}\\ 
The first term in $\widetilde{\mathcal{K}}$ may rewritten as 
\begin{align*}
  \widetilde{\mathcal{K}}_1(t) &= 2i\int_{\Gamma^4_t} (\xi_1+\xi_3)\widetilde{b_3}(\xi_1+\xi_3, \xi_2) \widehat{w}(\xi_1) \widehat{w}(\xi_2) \widehat{z}(\xi_3) \widehat{z}(\xi_4)\\
  &= i \int_{\Gamma^4_t} (\xi_1+\xi_3)\left[\widetilde{b_3}(\xi_1+\xi_3, \xi_2) - \widetilde{b_3}(\xi_1+\xi_3, -\xi_1)\right] \widehat{w}(\xi_1) \widehat{w}(\xi_2) \widehat{z}(\xi_3) \widehat{z}(\xi_4)
\end{align*}
since $\xi_2+\xi_4=-(\xi_1+\xi_3)$ and $\widetilde{b_3}(\mu,\eta)=\widetilde{b_3}(-\mu, -\eta)$. A second symmetrization leads to
\begin{align*}
  \widetilde{\mathcal{K}}_1(t) &= \frac i2 \int_{\Gamma^4_t} \Big((\xi_1+\xi_3)\left[\widetilde{b_3}(\xi_1+\xi_3, \xi_2) - \widetilde{b_3}(\xi_1+\xi_3, -\xi_1)\right] \\
  &\quad + (\xi_2+\xi_3)\left[\widetilde{b_3}(\xi_2+\xi_3, \xi_1) - \widetilde{b_3}(\xi_2+\xi_3, -\xi_2)\right] \Big) \widehat{w}(\xi_1) \widehat{w}(\xi_2) \widehat{z}(\xi_3) \widehat{z}(\xi_4).
\end{align*}
On the other hand, the second term in $\widetilde{\mathcal{K}}$ is
$$
\widetilde{\mathcal{K}}_2(t) = -\frac i2 \int_{\Gamma^4_t} (\xi_1+\xi_2)\widetilde{b_3}(\xi_1, \xi_2) \widehat{w}(\xi_1) \widehat{w}(\xi_2) \widehat{z}(\xi_3) \widehat{z}(\xi_4)
$$
Combining theses identities we conclude
$$
\widetilde{\mathcal{K}}(t) = \frac i2\int_{\Gamma^4_t} \widetilde{a}_4(\xi_1, \xi_2, \xi_3)  \widehat{w}(\xi_1) \widehat{w}(\xi_2) \widehat{z}(\xi_3) \widehat{z}(\xi_4)
$$
with a symbol
\begin{align}
\widetilde{a}_4(\xi_1, \xi_2, \xi_3) &= (\xi_1+\xi_3)\left[\widetilde{b_3}(\xi_1+\xi_3, \xi_2) - \widetilde{b_3}(\xi_1+\xi_3, -\xi_1)\right] \nonumber \\
&\quad + (\xi_2+\xi_3)\left[\widetilde{b_3}(\xi_2+\xi_3, \xi_1) - \widetilde{b_3}(\xi_2+\xi_3, -\xi_2)\right] \nonumber \\
&\quad - (\xi_1+\xi_2)\widetilde{b_3}(\xi_1, \xi_2) \nonumber \\
&=: (\widetilde{a}_{41} + \widetilde{a}_{42} + \widetilde{a}_{43})(\xi_1,\xi_2,\xi_3). \label{def:a4tilde}
\end{align}
Note that $\widetilde{a_4}$ is symmetric in $(\xi_1,\xi_2)$ and in $(\xi_3,\xi_4)$. We collect all needed estimates on this symbol in Lemma \ref{lem:a4tilde}.
From the localization properties in the definition of $\widetilde{a}_3$, it is clear that $\widetilde{a}_4 = 0$ in the region $\max(|\xi_1|, |\xi_2|, |\xi_3|) \ll N_0$.
Thus we may decompose $\widetilde{\mathcal{K}}$ as $\widetilde{\mathcal{K}} = \sum_{N\gtrsim N_0} \widetilde{\mathcal{K}}_N$ where
$$
\widetilde{\mathcal{K}}_N(t) = \sum_{N_1\lesssim N_2, N_3\lesssim N_4 \atop N_{max}=N} \sum_{M_1,M_2,M_3>0} \int_{\R_t} \Pi_{\widetilde{\chi}}(w_{N_1}, w_{N_2}, z_{N_3})z_{N_4}
$$
with
$$
\widetilde{\chi}(\xi_1,\xi_2,\xi_3) = \frac i2\phi_{M_1}(\xi_2+\xi_3)\phi_{M_2}(\xi_1+\xi_3)\phi_{M_3}(\xi_1+\xi_2) \widetilde{a}_4(\xi_1,\xi_2,\xi_3).
$$
In the sequel, we will split the sum in $N_i$ into several regions, and  we will denote by $\widetilde{\mathcal{K}}_{N,\mathcal{R}}$ the contribution of the region $\mathcal{R}$ to $\widetilde{\mathcal{K}}_N$. Estimate \eqref{est:Esigma.6} follows then if we prove that for any $N\gtrsim N_0$ and all $\mathcal{R}$, it holds
$$
|\widetilde{\mathcal{K}}_{N,\mathcal{R}}(t)| \lesssim  N^{0_-} \|w\|_{\overline{Z}^\sigma}^2 \|z\|_{Y^s}^2.
$$
For each region $\mathcal{R}$, we decompose $\widetilde{\mathcal{K}}_{N,\mathcal{R}} = \widetilde{\mathcal{K}}_{N,\mathcal{R}}^{low} + \widetilde{\mathcal{K}}_{N,\mathcal{R}}^{high}$ where $\widetilde{\mathcal{K}}_{N,\mathcal{R}}^{low}$ denotes the contribution of the sum  over $\Omega:= M_{min}M_{med}N^{\alpha-1}\ll N^\alpha$. This ensures that we may apply Proposition \ref{prop:l2tri} for the high modulation term $\widetilde{\mathcal{K}}_{N,\mathcal{R}}^{high}$.

\vskip .3cm
\noindent
\textbf{Region $A$}: $N_1\sim N_2\sim N_3\sim N_4 \sim N$.\\
In this region, the bound
 $ \|\widetilde{\chi}\|_{L^\infty} \lesssim N^{2\sigma+1-\alpha}$ follows from \eqref{a4tilde-A}. For the contribution of $\widetilde{\mathcal{K}}_{N,A}^{low}$, we deduce from \eqref{l2tri-basic} and $M_{min} \ll N^{\frac12}$, (which follows from $\Omega \ll N^{\alpha}$), that
$$
|\widetilde{\mathcal{K}}_{N,A}^{low}|\lesssim \sum_{M_1,M_2,M_3\atop M_{min}M_{med}\ll N} M_{min} N^{2\sigma+1-\alpha} \|w_{\sim N}\|_{Z^0}^2 \|z_{\sim N}\|_{Y^0}^2 \lesssim  N^{0_-} \|w\|_{Z^\sigma}^2 \|z\|_{Y^s}^2,
$$
since $s>1-\frac{3\alpha}4 \ge \frac 34-\frac \alpha 2$. In the case $\Omega\gtrsim N^\alpha$, the above bound on $\|\widetilde{\chi}\|_{L^\infty} $ and  \eqref{l2tri.1} with $\theta=1/2$ provide
\begin{align*}
  |\widetilde{\mathcal{K}}_{N,A}^{high}(t)| &\lesssim \sum_{1\lesssim M_1,M_2,M_3\lesssim N} M_{min} N^{\frac 12} (M_{min}M_{med}N^{\alpha-1})^{-\frac 12} N^{2\sigma+1-\alpha} \|w_{\sim N}\|_{Z^0}^2 \|z_{\sim N}\|_{Y^0}^2 \\
  &\lesssim N^{0_+} N^{-2s+2-\frac{3\alpha}2} \|w\|_{Z^s}^2 \|z\|_{Y^s}^2\\
  &\lesssim N^{0_-} \|w\|_{Z^\sigma}^2 \|z\|_{Y^s}^2
\end{align*}
where we used that $s>1-\frac{3\alpha}4$.

\vskip .3cm
\noindent
\textbf{Region $B$:} $N_{min}\ll N_{th}\sim N_{sub}\sim N_{max}=N$.\\
Note that in this region we have $M_{min}\sim M_{max}\sim N$, and thus $\Omega \sim N^{\alpha+1}\gtrsim N^\alpha$, so that $\widetilde{\mathcal{K}}_{N,B}^{low} = 0$. 

\vskip .3cm
\noindent
\textbf{Subregion $B_1$:} $N_{min}=N_1$. \\
In this region it holds $\|\widetilde{\chi}\|_{L^\infty} \lesssim N_1^{-1} N^{2\sigma+2-\alpha}$ thanks to \eqref{a4tilde-B1}.
Hence, \eqref{l2tri.1} with $\theta=1$ yields
 \begin{align*}
  |\widetilde{\mathcal{K}}_{N,B_1}^{high}(t)| &\lesssim \sum_{N_1\ll N} \sum_{M_i\sim N} (N_1N)^{\frac 12} NN^{-1-\alpha} N_1^{-1} N^{2\sigma+2-\alpha} \|w_{N_1}\|_{Z^0} \|w_{\sim N}\|_{Z^0} \|z_{\sim N}\|_{Y^0}^2\\
  &\lesssim N^{0_+} N^{\sigma-2s+\frac 52-2\alpha} \|w\|_{\overline{Z}^\sigma} \|w\|_{Z^\sigma} \|z\|_{Y^s}^2,
  \end{align*}
  which is acceptable since $\sigma-2s+\frac 52-2\alpha < -s+1-\alpha<0$.

\vskip .3cm 
\noindent
\textbf{Subregion $B_2$:} $N_{min}=N_3$. \\
On the support of $\widetilde{\chi}$ we have $|\widetilde{a_4}| \lesssim N^{2\sigma+1-\alpha}$. By \eqref{l2tri.1} with $\theta=1$, we get
\begin{align*}
   |\widetilde{\mathcal{K}}_{N,B_2}^{high}(t)| &\lesssim \sum_{N_3\ll N} \sum_{M_i\sim N} (N_3N)^{\frac 12} NN^{-1-\alpha} N^{2\sigma+1-\alpha} \|w_{\sim N}\|_{Z^0}^2 \|z_{N_3}\|_{Y^0} \|z_{\sim N}\|_{Y^0}\\
   &\lesssim \sum_{N_3\ll N} N_3^{\frac 12}\cro{N_3}^{-s} N^{-s+\frac 32-2\alpha} \|w\|_{Z^\sigma}^2 \|z\|_{Y^s}^2, 
 \end{align*}
 which is acceptable since $s_\alpha > 1-\alpha$ and $s_\alpha > \frac 32-2\alpha$. 

\vskip .3cm   
\noindent
\textbf{Region $C$:} $N_{min} \lesssim N_{th}\ll N_{sub}\sim N_{max}=N$. \\
In this region, it holds $M_{min}\ll M_{med}\sim M_{max}\sim N$ and $\Omega\sim M_{min}N^\alpha$, so that 
\[\Omega\ll N^\alpha \Longleftrightarrow M_{min}\ll 1. \]

\noindent
\textbf{Subregion C1:} $N \sim N_1\sim N_2\gg N_4\ge N_3$. \\
Note that in this region $M_3=M_{min}$ and $\|\widetilde{\chi}\|_{L^{\infty}} \lesssim N_4N^{2\sigma-\alpha}$ by $\eqref{a4tilde-C1}$. The term $\widetilde{\mathcal{K}}_{N,C_1}^{low}$ is easily estimated thanks to \eqref{l2tri-basic} as well as \eqref{a4tilde-C1} by
$$
|\widetilde{\mathcal{K}}_{N,C_1}^{low}| \lesssim \sum_{N_3\lesssim N_4\ll N} \sum_{M_3\ll 1} M_3 N_4 N^{-\alpha} \|w_{\sim N}\|_{Z^\sigma}^2 \|z_{N_3}\|_{Y^0} \|z_{N_4}\|_{Y^0}\lesssim N^{0_-} \|w\|_{Z^\sigma}^2 \|z\|_{Y^s}^2.
$$
To deal with the high modulation part, we proceed as in the region $C$ of the a priori estimate, and we first suppose $\alpha>1/2$. Recall that we have $N_4\gtrsim M_3\gtrsim 1$. Then we infer from \eqref{l2tri.1} with $\theta=1$ and \eqref{a4tilde-C1} that $ |\widetilde{\mathcal{K}}_{N,C_1}^{high}(t)|$ is bounded by
 $$
  \sum_{N_3\lesssim N_4\ll N}\sum_{1\lesssim M_3\lesssim N_4} \min\{M_3, (N_3N_4)^{\frac 12}\}M_3^{-1} N^{1-\alpha} N_4 N^{2\sigma-\alpha}  
    \|w_{\sim N}\|_{Z^0}^2 \|z_{N_3}\|_{Y^0} \|z_{N_4}\|_{Y^0}
$$
Moreover, we observe that
$$
\min\{M_3, (N_3N_4)^{\frac 12}\} M_3^{-1} N_4 \lesssim (N_3N_4)^{\frac 12}.
$$  
Hence,
\begin{align*}
  |\widetilde{\mathcal{K}}_{N,C_1}^{high}(t)| &\lesssim  N^{0_-} \sum_{N_3\lesssim N_4} N_3^{\frac 12}\cro{N_3}^{-s} N_4^{\frac 12-s} N^{(1-2\alpha)_+}  \|w\|_{Z^\sigma}^2 \|z\|_{Y^s}^2\\
  &\lesssim N^{0_-} \|w\|_{Z^\sigma}^2 \|z\|_{Y^s}^2
\end{align*}
since $s>\max(1-\alpha, \frac 32-2\alpha)$. In order to handle the case $\alpha\le 1/2$ we apply estimate \eqref{l2tri.1} with $\theta = \left(\frac{\alpha}{1-\alpha}\right)_-$ as well as \eqref{a4tilde-C1} to bound $ |\widetilde{\mathcal{K}}_{N,C_1}^{high}(t)|$ by
\begin{align*}
\sum_{N_3\lesssim N_4\ll N}\sum_{1\lesssim M_3\lesssim N_4} &\min\{M_3, (N_3N_4)^{\frac 12}\} M_3^{-\theta} N^{\theta(1-\alpha)} N_4 N^{2\sigma-\alpha}  
    \|w_{\sim N}\|_{Y^0}^2 \|z_{N_3}\|_{Y^0} \|z_{N_4}\|_{Y^0}\\
    &\lesssim N^{0_-} \sum_{N_3\lesssim N_4} N_3^{\frac 12}\cro{N_3}^{-s} N_4^{\frac 12-s} N_4^{1-\theta} \|w\|_{Z^\sigma}^2 \|z\|_{Y^s}^2,
\end{align*}
which is acceptable since $s>\frac 32-\theta$.
  
\vskip .3cm   
\noindent
\textbf{Subregion $C_2$:} $N_1\le N_2\ll N_3\sim N_4=N$. \\
In this region, $M_{min}=M_3$ and 
$\|\widetilde{\chi}\|_{L^{\infty}} \lesssim N_1^{-1}N^{2\sigma+2-\alpha}$ by $\eqref{a4tilde-C2}$.
The bound for the low modulation part follows by using \eqref{l2tri-basic}  and noticing that if $M_3 \ll 1$, it holds
$$
\min\{M_3, (N_1N_2)^{\frac 12}\}N_1^{-1} \lesssim M_3^{\frac 12} N_1^{-\frac 12} \cro{N_2}^{-\frac 12}.
$$ 
Hence, $ |\widetilde{\mathcal{K}}_{N,C_2}^{low}(t)|$ is bounded by
\begin{align*}
   \sum_{N_1\le N_2\ll N}\sum_{M_3\ll 1}&\min\{M_3,(N_1N_2)^{\frac 12}\}  N_1^{-1} N^{2\sigma-2s+2-\alpha} \|w_{N_1}\|_{Z^0} \|w_{N_2}\|_{Z^0} \|z_{\sim N}\|_{Y^s}^2\\
   &\lesssim N^{0_-} 
    \|w_{N_1}\|_{\overline{Z^\sigma}} \|w_{N_2}\|_{\overline{Z^\sigma}} \|z\|_{Y^s}^2 ,
\end{align*}
where we used the conditions $-\frac12<\sigma$ and $2\sigma-2s+2-\alpha<\alpha-1<0$.

To handle the high modulation term, where $N_2\gtrsim M_3\gtrsim 1$, we use that
$$
\min\{M_3, (N_1N_2)^{\frac 12}\}N_1^{-1}M_3^{-1} \lesssim N_1^{-\frac 12}N_2^{-\frac 12}.
$$
Thus, we infer from \eqref{l2tri.1} with $\theta=1$ that $ |\widetilde{\mathcal{K}}_{N,C_2}^{high}(t)|$ is bounded by
\begin{align*}
  & \sum_{N_1\le N_2\ll N}\sum_{1\lesssim M_3\lesssim N_2}\min\{M_3,(N_1N_2)^{\frac 12}\} M_3^{-1}N^{1-\alpha} N_1^{-1} N^{2\sigma+2-\alpha} \|w_{N_1}\|_{Z^0} \|w_{N_2}\|_{Z^0} \|z_{\sim N}\|_{Y^0}^2\\
   &\lesssim N^{0_-} \sum_{N_1\le N_2\ll N\atop N_2\gtrsim 1} N_1^{-\frac 12}\cro{N_1}^{-\sigma} N_2^{-\frac 12-\sigma}
    N^{2\sigma-2s+3-2\alpha}\|w_{N_1}\|_{Z^\sigma} \|w_{N_2}\|_{Z^\sigma} \|z\|_{Y^s}^2 \\
   &\lesssim N^{0_-}  \|w\|_{\overline{Z}^\sigma}^2 \|z\|_{Y^s}^2,
\end{align*}
where we used the conditions $-\frac12<\sigma$ and $\sigma<s-\frac32+\alpha$.

\vskip .3cm 
\noindent
\textbf{Subregion $C_3$:} $N_1\ll N_3\ll N_2\sim N_4 \sim N$. \\
Note that in this region, it holds $M_{min}=M_2\sim N_3$ and $\Omega\sim N_3N^\alpha$. First, by using \eqref{l2tri-basic} and the basic bound $|\widetilde{a}_4(\xi_1,\xi_2,\xi_3)| \lesssim N_1^{-1} N_3^{2\sigma+2-\alpha}$ (see \eqref{a4tilde-C3b}), we bound $|\widetilde{\mathcal{K}}_{N,C_3}^{low}(t)|$ by
$$
   \sum_{N_1\ll N_3\ll 1} (N_1N_3)^{\frac 12} N_1^{-1} N_3^{2\sigma+2-\alpha}  \|w_{N_1}\|_{Z^0} \|w\|_{Z^\sigma} \|z_{N_3}\|_{Y^0} \|z\|_{Y^0} \lesssim N^{0_-} \|w\|_{\overline{Z}^\sigma}^2 \|z\|_{Y^s}^2 ,
$$
where we used the conditions $2\sigma+\frac52-\alpha$ and $-\sigma<s$.

Estimate \eqref{a4tilde-C3b} is not sufficient to get a suitable bound for $\widetilde{\mathcal{K}}_{N,C_3}^{high}$ without using the Strichartz norm in $Z^\sigma$, which would require a Coifmann-Meyer type estimate on the symbol $\widetilde{\chi}$ (see Theorem 2.2 in \cite{MPV}). To avoid this technical difficulty, we use \eqref{a4tilde-C3} to  decompose 
\begin{equation*}
 \widetilde{\mathcal{K}}_{N,C_3}^{high}(t) = \sum_{j=1}^3 \widetilde{\mathcal{K}}_{N,C_3}^{high,j}(t)
\end{equation*}
with
\begin{align*}
  \widetilde{\mathcal{K}}_{N,C_3}^{high,1}(t) &= \sum_{N_1\ll N_3\ll N\atop N_3 \gtrsim 1} \int_{\R_t} \Pi_{\widetilde{\chi}_1}(w_{N_1}, w_{\sim N}, z_{N_3})z_{\sim N}  \\ 
    \widetilde{\mathcal{K}}_{N,C_3}^{high,2}(t) &=\sum_{N_1\ll N_3\ll N\atop N_3 \gtrsim 1} \int_{\R_t} \Pi_{\widetilde{\chi}_2}(w_{N_1}, w_{\sim N}, z_{N_3})z_{\sim N} \\
   \widetilde{\mathcal{K}}_{N,C_3}^{high,3}(t) &= - \frac 12 \sum_{N_1\ll N_3\ll N\atop N_3 \gtrsim 1} \int_{\R_t} \partial_x^{-1}w_{N_1} w_{\sim N} \Lambda^{2\sigma+2-\alpha}z_{N_3} z_{\sim N} 
  \end{align*}
    and where $\Lambda^{2\sigma+2-\alpha}$ is the Fourier multiplier by $\frac{\xi \widetilde{\nu}_\sigma(\xi)}{\omega_{\alpha+1}'(\xi)}$ and the multipliers $\widetilde{\chi}_j$ satisfy $\|\widetilde{\chi}_1\|_{L^\infty}\lesssim N_1^{2\sigma}N_3^{1-\alpha}$ and $\|\widetilde{\chi}_2\|_{L^\infty}\lesssim N_1^{-1}N_3N^{2\sigma+1-\alpha}$ on the support of the integral in $\widetilde{\mathcal{K}}_{N,C_3}^{high,j}$. To estimate $\widetilde{\mathcal{K}}_{N,C_3}^{high,1}$, we apply \eqref{l2tri.1} with $\theta=1/2$ and deduce that
\begin{align*}
  |\widetilde{\mathcal{K}}_{N,C_3}^{high,1}| &\lesssim \sum_{N_1\ll N_3\ll N \atop N_3 \gtrsim 1} (N_1N_3)^{\frac 12} N_3^{-\frac 12}N^{\frac{1-\alpha}2} N_1^{2\sigma}N_3^{1-\alpha} \|w_{N_1}\|_{Z^0} \|w_{\sim N}\|_{Z^0} \|z_{N_3}\|_{Y^0} \|z_{\sim N}\|_{Y^0}\\
  &\lesssim \sum_{N_1\ll N_3\ll N \atop N_3 \gtrsim 1} \frac{N_1}{\cro{N_1}} N_1^{2\sigma+\frac 12} \cro{N_1}^{-\sigma}N_3^{-s+1-\alpha}N^{-s-\sigma+\frac{1-\alpha}2} \|w_{N_1}\|_{\overline{Z}^\sigma} \|w\|_{Z^\sigma} \|z\|_{Y^s}^2,
\end{align*}
which is acceptable since   $-1/2<\sigma<s-\frac{3}{2}+\alpha $ and $s+\sigma > \frac 12(1-\alpha)$.

To deal with $\widetilde{\mathcal{K}}_{N,C_3}^{high,2}$, we argue as in the region $C$ in the proof of Proposition \ref{prop:l2tri} and apply \eqref{l2tri.1} with $\theta=\min\left\{1, \left(\frac{\alpha}{1-\alpha}\right)_-\right\}\in ]0,1]$, to bound $|\widetilde{\mathcal{K}}_{N,C_3}^{high,2}|$ by
\begin{align*}
 \sum_{N_1\ll N_3 \ll N \atop N_3 \gtrsim 1}& (N_1N_3)^{\frac 12} N_3^{-\theta}N^{\theta(1-\alpha)} N_1^{-1}N_3 N^{2\sigma+1-\alpha} \|w_{N_1}\|_{Z^0} \|w_{\sim N}\|_{Z^0} \|z_{N_3}\|_{Y^0} \|z_{\sim N}\|_{Y^0}\\
  &\lesssim \sum_{1\lesssim N_3\ll N}  N_3^{-s + \frac 32-\theta} N^{\sigma-s+(1+\theta)(1-\alpha)} \|w\|_{\overline{Z}^\sigma}^2 \|z\|_{Y^s}^2\\
  &\lesssim N^{0_-} \|w\|_{\overline{Z}^\sigma}^2 \|z\|_{Y^s}^2 ,
\end{align*}
where we used that $\sigma<s-\frac32 +\alpha$ and $s>1-\alpha$ if $\alpha>\frac 12$ and $s>\frac 32-\theta$ in the case $\alpha< \frac 12$.

Finally, the term $\widetilde{\mathcal{K}}_{N,C_3}^{high,3}$ is bounded with the help of Proposition \ref{prop:l2tri.2} and Bernstein inequality by
\begin{align*}
  |\widetilde{\mathcal{K}}_{N,C_3}^{high,3}| &\lesssim \sum_{N_1\ll N_3\ll N\atop N_3\gtrsim 1} N_1^{\frac 12} N_3^{-\frac \alpha 4} N^{\frac{1-\alpha}2} \|\partial_x^{-1}w_{N_1}\|_{Z^0} \|w_{\sim N}\|_{Z^0} \|\Lambda^{2\sigma+2-\alpha}z_{N_3}\|_{Y^0} \|z_{\sim N}\|_{Y^0} \\
  &\lesssim \sum_{1\lesssim N_3\ll N} N_3^{2\sigma-s+2-\frac{5\alpha}4} N^{-s-\sigma+\frac{1-\alpha}2} \|w\|_{\overline{Z}^\sigma}^2 \|z\|_{Y^s}^2,
\end{align*}
which is acceptable since $s+\sigma > \frac{1-\alpha}2$ and $\sigma-2s+\frac 52-\frac{7\alpha}4 < -s+1-\frac{3\alpha}4<0$.

\vskip .3cm 
\noindent
\textbf{Subregion $C_4$:} $N_3\lesssim N_1 \ll N_2\sim N_4=N$. \\
In this region $M_{min}=M_2$, so that $\Omega =M_2N^{\alpha}$ and $\|\widetilde{\chi}\|_{L^{\infty}}\lesssim N_1^{2\sigma+1-\alpha}$ in view of \eqref{a4tilde-C4}. By using \eqref{l2tri-basic}, we first note that
\begin{align*}
  |\widetilde{\mathcal{K}}_{N,C_4}^{low}| &\lesssim \sum_{N_3\lesssim N_1\ll N}\sum_{M_2\ll 1} M_2 N_1^{2\sigma+1-\alpha} \|w_{N_1}\|_{Z^0} \|w_{\sim N}\|_{Z^0} \|z_{N_3}\|_{Y^0} \|z_{\sim N}\|_{Y^0} \\
  &\lesssim \sum_{N_1\ll N} N_1^{2\sigma+1-\alpha} N^{-s-\sigma} \|w_{N_1}\|_{Z^0} \|w\|_{Z^\sigma} \|z\|_{Y^s}^2\\
  &\lesssim N^{0_-} \|w\|_{\overline{Z}^\sigma}^2 \|z\|_{Y^s}^2,
\end{align*}
where we used that $2\sigma+1-\alpha>-1$ if $N_1\lesssim 1$, and $s>1-\alpha$ otherwise.
Finally, we apply estimate \eqref{l2tri.1} with $\theta=1/2$ combined with \eqref{a4tilde-C4} to bound $|\widetilde{\mathcal{K}}_{N,C_4}^{high}(t)|$ by
\begin{align*}
   \sum_{N_3\le N_1\ll N}&\sum_{1\lesssim M_2\lesssim N_1}  M_2^{\frac 12}N_3^{\frac 12} M_2^{-\frac 12}N^{\frac{1-\alpha}2} N_1^{2\sigma+1-\alpha} \|w_{N_1}\|_{Z^0} \|w_{\sim N}\|_{Z^0} \|z_{N_3}\|_{Y^0} \|z_{\sim N}\|_{Y^0}\\
   &\lesssim N^{0_+} \sum_{N_3\le N_1\ll N\atop N_1\gtrsim 1} N_3^{\frac 12}\cro{N_3}^{-s} N_1^{\sigma+1-\alpha} N^{-s-\sigma+\frac{1-\alpha}2}
    \|w_{N_1}\|_{Z^{\sigma}} \|w\|_{Z^\sigma} \|z\|_{Y^s}^2 .
\end{align*}
Since $s+\sigma> \frac{1-\alpha}2$, this sum is bounded by
$$
    N^{0_+}\sum_{N_3\le N_1,\atop N_1\gtrsim 1} N_3^{\frac 12}\cro{N_3}^{-s} N_1^{-s+\frac 32(1-\alpha)} \|w\|_{Z^\sigma}^2 \|z\|_{Y^s}^2, 
$$
which is acceptable for $s>\max\{\frac 32(1-\alpha), 1-\frac{3\alpha}4\}$. This completes the proof of \eqref{est:Esigma.5}.

\vskip .5cm
\noindent
\textbf{Estimate of $\widetilde{\mathcal{L}}$.}\\ 
Thanks to Plancherel identity, $\widetilde{\mathcal{L}}$ may be written as
$$
\widetilde{\mathcal{L}}(t) = \frac i2 \int_{\Gamma^4_t} \widetilde{\widetilde{a}}_4(\xi_1,\xi_2) \prod_{i=1}^4 \widehat{w}(\xi_i)
$$
with a symbol
\begin{equation}\label{a44}
\widetilde{\widetilde{a}}_4(\xi_1,\xi_2) = (\xi_1+\xi_2)\widetilde{b}_3(\xi_1,\xi_2).
\end{equation}
This term enjoys more symmetries than $\widetilde{\mathcal{K}_1}+\widetilde{\mathcal{K}_2}$, and the corresponding estimates will be very similar to the proof of Proposition \ref{prop:Es}.
Performing a dyadic decomposition, we get $\widetilde{\mathcal{L}} = \sum_{N\gtrsim N_0} \widetilde{\mathcal{L}}_N$ where
$$
\widetilde{\mathcal{L}}_N(t) = \sum_{N_1, N_2, N_3, N_4 \atop N_{max}=N} \sum_{M_1,M_2,M_3>0} \int_{\R_t} \Pi_{\widetilde{\widetilde{\chi}}}(w_{N_1}, w_{N_2}, w_{N_3})w_{N_4}
$$
with
$$
\widetilde{\widetilde{\chi}}(\xi_1,\xi_2,\xi_3) = \frac i2\phi_{M_1}(\xi_2+\xi_3)\phi_{M_2}(\xi_1+\xi_3)\phi_{M_3}(\xi_1+\xi_2) \widetilde{\widetilde{a}}_4(\xi_1,\xi_2).
$$
Moreover, as a consequence of Lemma \ref{lem:b3tilde}, we get that
\begin{equation}\label{chitt}
\|\widetilde{\widetilde{\chi}}\|_{L^\infty} \lesssim N_{min}^{-1}N^{2\sigma+2-\alpha}.
\end{equation}
Again, we separate the low and high modulation frequencies corresponding respectively to $\Omega:= M_{min}M_{med}N^{\alpha-1}\ll N^\alpha$ and $\Omega \gtrsim N^{\alpha}$, which leads to the decomposition $\widetilde{\mathcal{L}}_N = \widetilde{\mathcal{L}}_N^{low} + \widetilde{\mathcal{L}}_N^{high}$.

From estimates \eqref{l2tri-basic} and \eqref{chitt} we infer
$$
|\widetilde{\mathcal{L}}_N^{low}(t)| \lesssim \sum_{N_1, N_2, N_3, N_4 \atop N_{max}=N} \sum_{M_1,M_2,M_3\atop M_{min}M_{med}\ll N} M_{min}^{1_-}(N_{min}N_{thd})^{0_+} N_{min}^{-1} N^{2\sigma+2-\alpha} \prod_{i=1}^4 \|w_{N_i}\|_{Z^0}.
$$
By symmetry, we reduce the sum to $N_1\lesssim N_2\lesssim N_3\sim N_4\sim N$. The contribution of the region $N_1\sim N_2\sim N$ is bounded by
$$
N^{0_+} N^{(\frac 12)_-} N^{2\sigma+1-\alpha} \|w_{\sim N}\|_{Z^0}^4 \lesssim N^{0_-} \|w\|_{Z^\sigma}^2 \|w\|_{Y^s}^2.
$$
In the case $N_1\sim N_2\ll N$, it holds $M_{min}\ll 1$ and thus the contribution is bounded by
$$
\sum_{N_1\sim N_2\ll N} N_1^{(-\frac 12)_+} N_2^{(-\frac 12)_+} N^{(2\sigma-2s+2-\alpha)_+} \|w_{N_1}\|_{Z^0} \|w_{N_2}\|_{Z^0} \|w\|_{Y^s}^2 \lesssim N^{0_-}  \|w\|_{\overline{Z}^\sigma}^2 \|w\|_{Y^s}^2,
$$
since $2\sigma-2s+2-\alpha<0$.
For the region $N_1\ll N_2\ll N$, we have $N_2\sim M_{min}\ll 1$ and the contribution for this case is estimated by
$$
\sum_{N_1\ll N_2\ll 1} N_1^{(-1)_+} N_2^{0_+} \|w_{N_1}\|_{Z^0} \|w_{N_2}\|_{Z^0} \|w\|_{Y^s}^2 \lesssim N^{0_-}  \|w\|_{\overline{Z}^\sigma}^2 \|w\|_{Y^s}^2. 
$$
Gathering the previous estimates we conclude
\begin{equation}\label{est:Esigma.7}
|\widetilde{\mathcal{L}}_N^{low}(t)| \lesssim N^{0_-} \|w\|_{\overline{Z}^\sigma}^2 \|w\|_{Y^s}^2 \lesssim N^{0_-} \|w\|_{\overline{Z}^\sigma}^2 (\|u\|_{Y^s}^2 + \|v\|_{Y^s}^2).
\end{equation}
Let us now tackle the high modulation term. From \eqref{l2tri.2} it holds that $|\widetilde{\mathcal{L}}_N^{high}(t)|$ is bounded by
\begin{equation}\label{est:Esigma.8}
 \sum_{N_1, N_2, N_3, N_4 \atop N_{max}=N} \sum_{M_1,M_2,M_3\atop M_{min}M_{med}\gtrsim N} \inf_{0\le \theta\le 1} P (N\Omega^{-1})^\theta N_{min}^{-1} N^{2\sigma+2-\alpha} \prod_{i=1}^4 \|w_{N_i}\|_{Z^0}
\end{equation}
where $P=\min\{M_{min}, (N_{min}N_{thd})^{\frac 12}\}$. As previously, we may assume $N_1\lesssim N_2\lesssim N_3\sim N_4\sim N$. In the region $N_1\sim N_2\sim N$, we have 
$$
\eqref{est:Esigma.8} \lesssim N^{0_+} N^{-2s+2-\frac{3\alpha}2} \|w_{\sim N}\|_{Z^\sigma}^2 \|w_{\sim N}\|_{Y^0}^2 \lesssim N^{0_-} \|w\|_{Z^\sigma}^2 \|w\|_{Y^s}^2
$$
where we took $\theta=\frac 12$, and we used that $s>1-\frac{3\alpha}4$. 

In the case $N_1\ll N_2$, we have $M_{min}\sim N_2\gtrsim 1$ and, taking $\theta=1$,we infer 
\begin{align*}
\eqref{est:Esigma.8}  & \lesssim \sum_{N_1\ll N_2\lesssim N} (N_1N_2)^{\frac 12} N_2^{-1} N^{1-\alpha} N_1^{-1} N^{2\sigma-2s+2-\alpha} \|w_{N_1}\|_{Z^0} \|w_{N_2}\|_{Z^0} \|w_{\sim N}\|_{Y^s}^2 \\
  & \lesssim \sum_{N_1\ll N_2\lesssim N} N_1^{-\frac 12} N_2^{-\frac 12} N^{2\sigma-2s+3-2\alpha} \|w_{N_1}\|_{Z^0} \|w_{N_2}\|_{Z^0} \|w\|_{Y^s}^2\\
  & \lesssim N^{0_-} \|w\|_{\overline{Z}^\sigma}^2 \|w\|_{Y^s}^2,
\end{align*}
since $2\sigma-2s+3-2\alpha < 0$. 

Finally, the contribution of the region $N_1\sim N_2\ll N$ is bounded by 
\begin{align*}
 \eqref{est:Esigma.8}  & \lesssim  \sum_{N_1\ll N} \sum_{M_3\lesssim N_1} M_3 (M_3^{-1}N^{1-\alpha})^{1_-} N_1^{-1} N^{2\sigma-2s+2-\alpha} \|w_{\sim N_1}\|_{Z^0}^2 \|w_{\sim N}\|_{Y^s}^2 \\
  &\quad \lesssim N^{0_-} \sum_{N_1\ll N} \left(N_1^{(-\frac 12)_+} \|w_{\sim N_1}\|_{Z^0}\right)^2 \|w\|_{Y^s}^2\\
  &\quad \lesssim N^{0_-} \|w\|_{\overline{Z}^\sigma}^2 \|w\|_{Y^s}^2.
\end{align*}
It follows from these estimates that
$$
|\widetilde{\mathcal{L}}_N^{high}(t)| \lesssim N^{0_-} \|w\|_{\overline{Z}^\sigma}^2 \|w\|_{Y^s}^2 \lesssim N^{0_-} \|w\|_{\overline{Z}^\sigma}^2 (\|u\|_{Y^s}^2 + \|v\|_{Y^s}^2).
$$
Combined with \eqref{est:Esigma.7} this proves \eqref{est:Esigma.6}.
\end{proof}

\section{Proof of Theorem \ref{th:main} and Corollary \ref{coro:gwp}} \label{Sec:6}

\subsection{Lipschitz bound, uniqueness and unconditional uniqueness} \label{uniqueness}

Let $s>s_\alpha$, $0<T\le 1$ and $u$, $v \in Y^s_T$ be two solutions of \eqref{fKdV} on $[0,T]$ emanating from the initial data $u_0$, $v_0 \in H^s$ respectively and satisfying $u_0-v_0 \in \overline{L}^2$.  We fix $\sigma$ as in \eqref{cond:sigma} and set $w=u-v$. Thus, $w$ is a solution to \eqref{eqdiff} emanating from $w_0=u_0-v_0 \in \overline{H}^{\sigma}$ and the continuous embedding from $Y^s_T$ into $Z^s_T$ ensures that $w \in Z^\sigma_T$. Moreover, arguing as in Subsection 6.1 in \cite{MPV}, we find that $w\in \overline{Z}^\sigma_T$.

Now, we deduce combining Corollary \ref{prop:zst}, Lemma \ref{coer:E_tilde_sigma}, estimate \eqref{est:ytilde} and Proposition \ref{prop:E_tilde_sigma} that there exists $0<\gamma=\gamma(s,\sigma,\alpha)<1$ such that, for any $N_0 \gg 1$, 
\begin{equation*}
 \|w\|_{L^\infty_T\overline{H}^\sigma_x}^2 \lesssim  \|w_0\|_{\overline{H}^\sigma }+ \left(TN_0^{\frac 32}+N_0^{-\gamma}\right) \left( 1+\|u\|_{Y^s_T}+\|v\|_{Y^s_T}\right)^{16}  \|w\|_{L^\infty_T\overline{H}^\sigma_x}^2  .
\end{equation*}
Therefore, by choosing $N_0\gg  \left( 1+\|u\|_{Y^s_T}+\|v\|_{Y^s_T}\right)^{\frac{16}{\gamma}}$, we deduce that
\begin{equation} \label{Lip}
\|w\|_{L^\infty_{T'}\overline{H}^\sigma_x} \lesssim \|w_0\|_{\overline{H}^\sigma } ,
\end{equation}
for $T'=\min\left\{ T,c\left( 1+\|u\|_{Y^s_T}+\|v\|_{Y^s_T}\right)^{-16(\frac3{2\gamma}+1)}\right\}$
where $c$ is a small positive constant. 

Thus, by choosing $u_0=v_0$, \eqref{Lip} ensures that $u\equiv 0$ on $(0,T')$. Moreover, $u$, $v \in Y^s_T \subseteq C([0,T] :H^s)$,  so that $u(T')=v(T')$. Therefore, we conclude by reapplying the above argument a finite number of times that $u\equiv v$ on $[0,T]$. 

Finally, we prove the unconditional uniqueness result. Observe from Proposition \ref{propse} that 
\begin{equation} \label{embYsT}
C([0,T]:H^s) \cap L^2((0,T) : L^\infty) \subseteq Y^s_T .
\end{equation}
Now, let $s>\max\{\frac12,s_\alpha\}$ and let $u , v \in C([0,T] : H^s)$ be two solution of \eqref{fKdV}. It follows from \eqref{embYsT} and the Sobolev embedding $H^s \subseteq L^\infty$ that $u, v \in Y^s_T$. Thus we conclude as previously that $u \equiv v$ on $[0,T]$, which concludes the proof of Corollary \ref{coro:uu}.

\subsection{\emph{A priori} estimate} In this subsection, we derive \emph{a priori} estimates for smooth solutions in $H^s$, for $s>\widetilde{s}_\alpha$. When $s \ge 3$, the result follows by classical arguments. By arguing as in \cite{ABFS89}, we deduce the following result which will be our starting point. 
\begin{proposition} \label{propo:smooth}
Let $s \ge 3$. Then, for any $u_0 \in H^s$, there exists a positive time $T=T(\|u_0\|_{H^s})$ and a unique solution $u$ of \eqref{fKdV} in $C([0,T] : H^s)$ such that $u(0,\cdot)=u_0$. Moreover, for any fixed $R>0$, the flow map $u_0 \mapsto u$ is continuous form the ball of $H^s$ of radius $R$ centred at the origin into $C([0,T(R)) : H^s)$.  
\end{proposition}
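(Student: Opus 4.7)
The plan is to follow the classical energy method for quasilinear dispersive equations, exploiting the fact that the regularity threshold $s \geq 3$ lies well above the Sobolev embedding index $3/2$, so no dispersive smoothing from $L_{\alpha+1}$ is needed. The key structural fact is that $L_{\alpha+1}$ is skew-adjoint on $L^2$ (since $\omega_{\alpha+1}$ is real-valued and odd by Hypothesis \ref{hyp1}), hence its contribution drops out of every $H^s$ energy identity.

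First I would derive the a priori energy estimate. Applying $J_x^s$ to \eqref{fKdV}, pairing with $J_x^s u$ in $L^2$, and using skew-adjointness of $L_{\alpha+1}$, the only non-trivial contribution is
\[ \int_{\R} J_x^s\partial_x(u^2) \, J_x^s u \, dx = \int_{\R} [J_x^s,u]\partial_x u \, J_x^s u\, dx + \int_{\R} u\, \partial_x J_x^s u \, J_x^s u \, dx. \]
The second integral reduces after integration by parts to $-\tfrac12 \int (\partial_x u)(J_x^s u)^2 \, dx$, bounded by $\|\partial_x u\|_{L^\infty}\|u\|_{H^s}^2$. The commutator is handled by the Kato--Ponce inequality. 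Combined with the Sobolev embedding $H^s \hookrightarrow W^{1,\infty}$ (valid since $s\geq 3 > 3/2$), this yields
\[ \frac{d}{dt}\|u(t)\|_{H^s}^2 \lesssim \|u(t)\|_{H^s}^3, \]
and Gronwall provides a time $T = T(\|u_0\|_{H^s}) > 0$ on which any smooth solution remains bounded in $H^s$.

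To construct the solution I would use parabolic regularization: solve the dissipative problem
\[ \partial_t u_\varepsilon - L_{\alpha+1} u_\varepsilon + \varepsilon \partial_x^4 u_\varepsilon = \partial_x(u_\varepsilon^2), \qquad u_\varepsilon(0,\cdot) = u_0, \]
which admits smooth solutions by standard semilinear fixed-point arguments. The $\varepsilon$-term is dissipative in the $H^s$ energy identity, so the same estimate above applies uniformly in $\varepsilon$ on a common interval $[0,T]$. Passing to the limit $\varepsilon \to 0$ by Aubin--Lions compactness (with $\partial_t u_\varepsilon$ controlled in $H^{s-\alpha-1}$ via the equation) produces a limit $u \in L^\infty([0,T] : H^s) \cap C_w([0,T] : H^s)$ solving \eqref{fKdV}. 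Upgrading weak to strong continuity in $H^s$ is standard: the $H^s$-norm at time $t$ is weakly lower-semicontinuous, while the energy estimate run in reverse time gives the matching upper semicontinuity, forcing $t \mapsto \|u(t)\|_{H^s}$ to be continuous.

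For uniqueness, if $u, v$ are two such solutions then $w=u-v$ satisfies \eqref{eqdiff} with $z=u+v$, and an $L^2$-energy estimate using skew-adjointness gives $\tfrac{d}{dt}\|w\|_{L^2}^2 \lesssim \|\partial_x(u+v)\|_{L^\infty}\|w\|_{L^2}^2$, whence Gronwall with $w(0)=0$ closes uniqueness. The main technical step, and the one I expect to require the most care, is the continuity of the flow map: since the equation is quasilinear, comparing two solutions directly at the $H^s$-level loses a derivative. The standard remedy is a Bona--Smith argument --- mollify the initial data $u_0$ at scale $\kappa$, propagate the uniform $H^s$-bound on the smoothed solutions $u^\kappa$, compare them in a lower Sobolev norm where the energy method closes without derivative loss, and then interpolate against the tame bound $\|u_0^\kappa - u_0\|_{H^s} = o(1)$. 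For $s\geq 3$ all the ingredients are classical and the rates work out uniformly on bounded sets of $H^s$, giving the claimed continuity on the ball of radius $R$.
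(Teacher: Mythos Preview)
Your proposal is correct and follows the classical energy method that the paper itself invokes: the paper does not give a detailed proof of this proposition but simply cites \cite{ABFS89}, and your sketch (skew-adjointness of $L_{\alpha+1}$, Kato--Ponce commutator estimate, parabolic regularization, $L^2$-uniqueness, Bona--Smith for continuity) is precisely the content of that reference. There is nothing to add.
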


Let $u_0 \in H^{\infty}$. From the above result, $u_0$ gives rise to a solution $u \in C([0,T^\star) : H^\infty)$ of \eqref{fKdV} defined on its maximal time of existence $[0,T^{\star})$ with $T^\star \ge T(\|u_0\|_{H^3})$. Moreover,  the blow-up alternative
\begin{equation} \label{BUA}
\text{if} \ T^\star<+\infty, \quad \text{then} \quad \lim_{t \nearrow T^\star} \|u(t) \|_{H^3} =+\infty ,
\end{equation}
follows from the fact that $T=T(\|u_0\|_{H^s})$ in Proposition \ref{propo:smooth} can be chosen as a non-increasing function of its argument. 

By using a bootstrap argument, we prove that the solution satisfies a suitable \emph{a priori} estimate
on a positive time interval depending only on the $H^s$-norm of the initial datum.

\begin{lemma} \label{apriori:smooth}
 Let $\widetilde{s}_{\alpha}<s' \le s \le 3$. Then, there exist $A_0>0$ and $\beta_0>0$ such that $T^\star \ge A_0 (1+\|u_0\|_{H^{s'}})^{-\beta_0}$ and 
\begin{equation} \label{apriori:smooth.1}
\|u\|_{Y^{s}_T} \le 4C_0\|u_0\|_{H^{s}} \quad \text{with} \quad T= A_0 (1+\|u_0\|_{H^{s'}})^{-\beta_0}. 
\end{equation}
\end{lemma}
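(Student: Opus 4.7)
The plan is a classical bootstrap argument coupling the coercivity of the modified energy $E^s$ (Lemma~\ref{lemma:coer}), the energy estimate of Proposition~\ref{prop:Es}, and the conversion of $\widetilde{Y}^s_T$-norms into $L^\infty_T H^s_x$-norms provided by Corollary~\ref{prop:yst}. Given $u_0\in H^\infty$, Proposition~\ref{propo:smooth} yields a maximal smooth solution $u$ on $[0,T^\star)$ with blow-up alternative~\eqref{BUA}. For fixed $\widetilde{s}_\alpha<s'\le s\le 3$, I would define
\begin{equation*}
T_\ast=\sup\bigl\{T\in(0,T^\star):\ \|u\|_{L^\infty_T H^{s'}_x}\le 4\|u_0\|_{H^{s'}}\ \text{and}\ \|u\|_{L^\infty_T H^s_x}\le 4\|u_0\|_{H^s}\bigr\},
\end{equation*}
which is strictly positive by continuity of the Sobolev norms.

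Next I would pick a parameter $N_0\sim(1+\|u_0\|_{H^{s'}})^{2/\gamma}$ large enough so that the coercivity threshold $N_0\ge C_1(1+4\|u_0\|_{H^{s'}})^{2/\alpha}$ of Lemma~\ref{lemma:coer} (legitimate on $[0,T_\ast]$ since $\widetilde{s}_\alpha<s'$) and the smallness bound $2^{10}\,C_2 C_0^4\,N_0^{-\gamma}\|u_0\|_{H^{s'}}^2\le 1/4$ both hold. Under the further restriction $T\le a_0(1+4\|u_0\|_{H^{s'}})^{-1/\kappa}$, Corollary~\ref{prop:yst} applied at both levels $r\in\{s',s\}$ yields $\|u\|_{\widetilde{Y}^r_T}\le C_0\|u\|_{L^\infty_T H^r_x}\le 4C_0\|u_0\|_{H^r}$. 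Plugging the coercivity bound $\|u(t)\|_{H^s}^2\le 4E^s(t)$ into Proposition~\ref{prop:Es}, exploiting mass conservation $\|u\|_{L^\infty_T L^2_x}=\|u_0\|_{L^2}\le\|u_0\|_{H^{s'}}$, and absorbing the $N_0^{-\gamma}$ contribution thanks to the choice of $N_0$, one reaches an inequality of the form
\begin{equation*}
\|u\|_{L^\infty_T H^s_x}^2\le 3\|u_0\|_{H^s}^2+4\,C_2\,T N_0^{3/2}\|u_0\|_{H^{s'}}\|u\|_{L^\infty_T H^s_x}^2+\tfrac{1}{4}\|u_0\|_{H^s}^2.
\end{equation*}

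Setting finally $T=A_0(1+\|u_0\|_{H^{s'}})^{-\beta_0}$ with $\beta_0>\max\{1+\tfrac{3}{\gamma},1/\kappa\}$ and $A_0$ sufficiently small forces the middle coefficient to be $\le 1/4$; absorbing the middle term on the left yields $\|u\|_{L^\infty_T H^s_x}\le\sqrt{13/3}\,\|u_0\|_{H^s}<3\|u_0\|_{H^s}$, a strict improvement of the bootstrap at level $s$. The identical argument with $s$ replaced by $s'$ improves it at level $s'$. A standard continuity argument then shows $T_\ast\ge T$: otherwise continuity would force equality in one of the bootstrap inequalities at $t=T_\ast<T^\star$, contradicting the strict improvement just obtained. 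Applying the same scheme at $s=3$ (permissible since $u_0\in H^\infty$) together with~\eqref{BUA} further gives $T^\star>T$, and Corollary~\ref{prop:yst} provides the desired $\|u\|_{Y^s_T}\le\|u\|_{\widetilde{Y}^s_T}\le 4C_0\|u_0\|_{H^s}$. The main delicacy is a bookkeeping one: the single time scale $T=A_0(1+\|u_0\|_{H^{s'}})^{-\beta_0}$ must simultaneously enforce coercivity, the $N_0^{-\gamma}$ smallness, Corollary~\ref{prop:yst}'s compatibility condition, and smallness of $TN_0^{3/2}\|u_0\|_{L^2}$; because every ingredient depends polynomially on $1+\|u_0\|_{H^{s'}}$, a uniform pair $(A_0,\beta_0)$ depending only on $\alpha,s,s'$ can be produced.
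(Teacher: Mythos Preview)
Your approach is essentially the paper's: a bootstrap combining Lemma~\ref{lemma:coer}, Proposition~\ref{prop:Es}, and Corollary~\ref{prop:yst}, with the only cosmetic difference that the paper bootstraps on the single quantity $\|u\|_{Y^{s'}_T}\le 4C_0\|u_0\|_{H^{s'}}$ rather than on $L^\infty_T H^r_x$-norms at two levels. One ordering point worth tightening: your continuity step tacitly assumes $T_\ast<T^\star$, but the case $T_\ast=T^\star<T$ is not yet excluded when you invoke it; the paper secures this \emph{inside} the contradiction argument by applying the energy estimate at level $s=3$ on $[0,T_1]$ for $T_1<T_0$ and invoking the blow-up alternative~\eqref{BUA} to force $T_0<T^\star$, so your $s=3$ step should be moved into the bootstrap rather than placed after it.
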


\begin{proof} Let $\widetilde{s}_{\alpha}<{s'} \le 3$. We define 
\begin{equation} \label{BS}
T_0=\sup \left\{ T \in (0,T^\star) : \|u\|_{Y^{s'}_T} \le 4C_0\|u_0\|_{H^{s'}} \right\} .
\end{equation}
Observe that the above set is non empty since $u \in  C([0,T^\star) : H^\infty)$, so that $T_0$ is well-defined. We argue by contradiction and assume that 
\begin{equation} \label{def:A0}
0<T_0< A_0 (1+\|u_0\|_{H^{s'}})^{-\beta_0}, 
\end{equation}
where $A_0>0$ and $\beta_0>0$ will be fixed below. 

Let $0<T_1<T_0$. We have from the definition of $T_0$ that $\|u\|_{Y_{T_1}^{s'}} \le 4C_0\|u_0\|_{H^{s'}}$. Then, it follows from Corollary \ref{prop:yst}, Lemma \ref{lemma:coer} and Proposition \ref{prop:Es} that for $s' \le s \le 3$, $N_0 \ge C_1 \left(1+4C_0\|u_0\|_{H^{s'}} \right)^{\frac2{\alpha}}$ and $T_1 \le a_0 (1+4C_0\|u_0\|_{H^{s'}})^{-\frac1\kappa}$,
\begin{equation} \label{aprio:est:YT1}
\|u\|_{Y_{T_1}^{s}}^2 \le 3C_0^2\|u_0\|_{H^{s}}^2+4C_0^2C_2T_1N_0^{\frac32}\|u_0\|_{L^2}\|u\|_{L^\infty_{T_1}H^{s}_x}^2 +
2^6C_2C_0^6N_0^{-\gamma} \|u_0\|_{H^{s'}}^2\|u\|_{Y_{T_1}^{s}}^2 .
\end{equation}
Now, we set
\[N_0=\max\left\{C_1 \left(1+4C_0\|u_0\|_{H^{s'}} \right)^{\frac2{\alpha}},(2^8C_2C_0^6 \|u_0\|_{H^{s'}}^2)^{\frac1{\gamma}} \right\} \]
and we choose $A_0$ and $\beta_0$ in \eqref{def:A0} such that 
\[A_0 (1+\|u_0\|_{H^{s'}})^{-\beta_0} < \min\left\{ a_0 (1+4C_0\|u_0\|_{H^{s'}})^{-\frac1\kappa}, (2^4C_0^2C_2N_0^\frac32 \|u_0\|_{L^2})^{-1}\right\} . \]
Thus, we deduce applying \eqref{aprio:est:YT1} with $s=3$ that 
\[\|u\|_{L^\infty_{T_1}H^{3}_x}^2 \le  \|u\|_{Y_{T_1}^{3}}^2 \le 6C_0^2 \|u_0\|_{H^{3}}^2, \quad 0<T_1<T_0 . \]
This implies in view of the blow-up alternative \eqref{BUA} that $T_0<T^\star$. 

Now, estimate \eqref{aprio:est:YT1} at the level $s=s'$ yields $\|u\|_{Y_{T_1}^{s'}}^2 \le 6C_0^2 \|u_0\|_{H^{s'}}^2$, so that by continuity $\|u\|_{Y_{T_2}^{s'}}^2 \le 8C_0^2 \|u_0\|_{H^{s'}}^2$ for some $T_0<T_2<T^\star$, which contradicts the definition of $T_0$. 

Therefore, $T_0 \ge T:=A_0 (1+\|u_0\|_{H^{s'}})^{-\beta_0}$. Now, let $s' \le s \le 3$. We use the estimate \eqref{aprio:est:YT1} at the regularity $s$ and argue as above to get the bound \eqref{apriori:smooth.1}. 
\end{proof}

As a consequence of Corollary \ref{coro:Eshigh} and Lemma \ref{apriori:smooth}, we deduce a uniform decay of the $H^s$-norm  of the high frequencies of $ u $ on  $ [0,T] $.

\begin{corollary} \label{apriori:high}
Let $\widetilde{s}_{\alpha}<s' \le s \le 3$ and let $T= A_0 (1+\|u_0\|_{H^{s'}})^{-\beta_0}$ as in Lemma \ref{apriori:smooth}. Then, for $N_0\ge  C_1 (1+\|u\|_{L^\infty_TH^{\widetilde{s}_\alpha}})^{\frac 2 \alpha}$,
\begin{equation} \label{apriori:high.1}
\|D^s_x\Pi_{>N_0}u(t)\|_{L^\infty_TL^2_x}^2 \lesssim \|D^s_x\Pi_{>N_0}u_0\|_{L^2}^2+N_0^{-\gamma}\|u_0\|_{H^{s}}^4 ,
\end{equation}
where $\gamma>0$ is defined in Corollary \ref{coro:Eshigh}.
\end{corollary}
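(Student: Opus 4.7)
The plan is to derive the high frequency decay estimate by combining the coercivity and smoothing provided by Corollary~\ref{coro:Eshigh} with the uniform control of $\|u\|_{\widetilde{Y}^s_T}$ coming from Lemma~\ref{apriori:smooth} and Corollary~\ref{prop:yst}. The hypothesis $N_0 \ge C_1(1+\|u\|_{L^\infty_T H^{\widetilde{s}_\alpha}})^{2/\alpha}$ is precisely the condition under which both parts of Corollary~\ref{coro:Eshigh} hold.

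First, from the coercivity bound~\eqref{coro:Eshigh.1} applied at any time $t\in(0,T)$ and at $t=0$, one has
\[
\tfrac{1}{4}\|D_x^s\Pi_{>N_0}u(t)\|_{L^2}^2 \le E^s_{>N_0}(t) \le \tfrac{3}{4}\|D_x^s\Pi_{>N_0}u(t)\|_{L^2}^2,
\]
and similarly at $t=0$. Hence
\[
\|D_x^s\Pi_{>N_0}u(t)\|_{L^2}^2 \le 4\,E^s_{>N_0}(t) \le 4\,E^s_{>N_0}(0) + 4\bigl|E^s_{>N_0}(t)-E^s_{>N_0}(0)\bigr| \le 3\|D_x^s\Pi_{>N_0}u_0\|_{L^2}^2 + 4\bigl|E^s_{>N_0}(t)-E^s_{>N_0}(0)\bigr|.
\]

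Next, I would invoke the smoothing estimate~\eqref{coro:Eshigh.2} to bound the modulus of the difference by $C_2 N_0^{-\gamma}\|u\|_{\widetilde{Y}^{s'}_T}^2\|u\|_{\widetilde{Y}^s_T}^2$. To dispose of these space-time norms, I would apply Corollary~\ref{prop:yst} at both regularities $s'$ and $s$: by the choice of $T=A_0(1+\|u_0\|_{H^{s'}})^{-\beta_0}$ in Lemma~\ref{apriori:smooth}, the smallness condition of Corollary~\ref{prop:yst} is satisfied (up to possibly decreasing $A_0$ or increasing $\beta_0$), so that
\[
\|u\|_{\widetilde{Y}^{s'}_T} \le C_0\|u\|_{L^\infty_T H^{s'}_x} \quad\text{and}\quad \|u\|_{\widetilde{Y}^s_T} \le C_0\|u\|_{L^\infty_T H^s_x}.
\]
Combined with the a priori bound~\eqref{apriori:smooth.1} of Lemma~\ref{apriori:smooth}, this yields $\|u\|_{\widetilde{Y}^{s'}_T}\lesssim \|u_0\|_{H^{s'}}$ and $\|u\|_{\widetilde{Y}^s_T}\lesssim \|u_0\|_{H^s}$.

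Putting these bounds together and taking the supremum over $t\in[0,T]$ gives
\[
\|D_x^s\Pi_{>N_0}u\|_{L^\infty_T L^2_x}^2 \lesssim \|D_x^s\Pi_{>N_0}u_0\|_{L^2}^2 + N_0^{-\gamma}\|u_0\|_{H^{s'}}^2\|u_0\|_{H^s}^2,
\]
and then one simply uses $\|u_0\|_{H^{s'}}\le\|u_0\|_{H^s}$ (since $s'\le s$) to conclude~\eqref{apriori:high.1}. There is no real obstacle here: the bulk of the work was already done in Corollary~\ref{coro:Eshigh} and Lemma~\ref{apriori:smooth}, and the only point to keep track of is that the time $T$ chosen in Lemma~\ref{apriori:smooth} is compatible with the smallness requirement of Corollary~\ref{prop:yst}, which is a harmless adjustment of the constants $A_0$ and $\beta_0$.
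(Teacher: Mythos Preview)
Your proof is correct and follows exactly the approach the paper intends: the paper states Corollary~\ref{apriori:high} as a direct consequence of Corollary~\ref{coro:Eshigh} and Lemma~\ref{apriori:smooth} without further details, and your argument fills these in in the natural way. One minor point: no adjustment of $A_0$ or $\beta_0$ is needed, since the proof of Lemma~\ref{apriori:smooth} already chooses these constants so that $T\le a_0(1+4C_0\|u_0\|_{H^{s'}})^{-1/\kappa}$, and the bound $\|u\|_{L^\infty_T H^{s'}_x}\le 4C_0\|u_0\|_{H^{s'}}$ from~\eqref{apriori:smooth.1} (applied with $s=s'$) then guarantees the hypothesis of Corollary~\ref{prop:yst}.
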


\subsection{Local existence in $H^s$} Let $s_\alpha<s <3$, $\widetilde{s}_\alpha<s'\le s$ and let $u_0 \in H^s$. We set $u_{0,n}=\Pi_{\le 2^n}u_0$, where we recall the definition $\Pi_{ \le N}u=\mathcal{F}^{-1}_x\big({\bf 1}_{|\xi| \le N}\mathcal{F}_xu\big)$.  Then, $u_{0,n} \in H^\infty$ and we denote by $u_n \in C([0,T_n^\star): H^\infty)$ the solution of \eqref{fKdV} emanating from $u_{0,n}$ and defined on its maximal time of existence $[0,T_n^\star)$.  

Then, it follows from Lemma \ref{apriori:smooth} that
\begin{equation} \label{def:T}
T_n^\star \ge T:=A_0 (1+\|u_0\|_{H^{s'}})^{-\beta_0}
\end{equation} 
and
\begin{equation} \label{bound:Ys}
\|u_n\|_{Y^{s}_T} \lesssim \|u_{0,n}\|_{H^{s}} \lesssim  \|u_{0}\|_{H^{s}}.
\end{equation}

Now, let $\sigma$ as in \eqref{cond:sigma} and let $n \ge m \ge 0$, so that $u_{0,n}-u_{0,m} \in \overline{L}^2$. Hence, we deduce from \eqref{Lip} that there exist positive constants $A_1$ and $\beta_1$ such that
\begin{equation} \label{Lip:un}
\|u_n-u_m\|_{L^\infty_{T'}\overline{H}^\sigma_x} \lesssim \|u_{0,n}-u_{0,m}\|_{\overline{H}^\sigma } \lesssim \|\Pi_{>2^m}u_0\|_{H^s } ,
\end{equation}
where 
\begin{equation}  \label{def:Tprime}
T':=A_1 (1+\|u_0\|_{H^s})^{-\beta_1}<T .
\end{equation} 
Thus, we infer combining \eqref{apriori:high.1} and \eqref{Lip:un} 
\begin{align} 
\|u_n&-u_m\|_{L^\infty_{T'}H^s_x}  \nonumber
\\ &\lesssim N_0^{s-\sigma}\|\Pi_{\le N_0}(u_n-u_m)\|_{L^\infty_{T'}H^\sigma_x}+\|\Pi_{>N_0}u_n\|_{L^\infty_{T'}H^s_x}+\|\Pi_{>N_0}u_m\|_{L^\infty_{T'}H^s_x} \nonumber
\\ &\lesssim N_0^{s-\sigma}\|\Pi_{>2^m}u_0\|_{H^s }+\|\Pi_{>N_0}u_0\|_{H^s}+N_0^{-\frac{\gamma}2}\|u_0\|_{H^{s}}^2  \label{persistence}
\end{align}
where $N_0\ge  C_1 (1+\|u\|_{L^\infty_TH^{\widetilde{s}_\alpha}})^{\frac 2 \alpha}$. This proves that $\{u_n\}$ is a Cauchy sequence in $C([0,T'] : H^s)$ and thus converges to a function $u$ in $C([0,T'] : H^s)$. Moreover, it follows from \eqref{bound:Ys} that $u \in Y^s_{T'}$ and we deduce that $u$ is a solution of \eqref{fKdV} on $[0,T']$ satisfying $u(\cdot,0)=u_0$. 

Finally, by repeating this argument a finite number of times, we deduce that $u \in Y^s_T$ and  $\{u_n\}$ converges to $u$ in  $C([0,T] : H^s)$ where $T$ is defined in \eqref{def:T}. 

\subsection{Continuity of the flow map} \label{64}
Let $\{u_{0}^j\} \subset B_{H^s}(0,2\|u_0\|_{H^s})$ be a sequence converging to $u_0$ in $H^s$. We denote by $u^j$ and $u_n^j$ the solutions to \eqref{fKdV} emanating respectively from $u_0^j$ and $\Pi_{\le 2^n}u_0^j$. 

On one hand, noticing that 
\begin{equation} \label{unif:conv}
\lim_{n \to +\infty} \sup_{j \in \mathbb N}\|\Pi_{>2^n}(u_0^j)\|_{H^s}=0 ,
\end{equation}
we infer from \eqref{persistence} that
\begin{equation} \label{unif:conv.2}
\lim_{n \to +\infty} \sup_{j \in \mathbb N}\|u_n^j-u^j\|_{L^\infty_{T'}H^s_x}=0 ,
\end{equation}
where $T'$ is defined in \eqref{def:Tprime}. 

Let $\varepsilon>0$. Observe from the triangle inequality that
\begin{equation*} 
\|u^j-u\|_{L^\infty_{T'}H^s_x} \le \|u^j-u_n^j\|_{L^\infty_{T'}H^s_x}+\|u_n^j-u_n\|_{L^\infty_{T'}H^s_x} + \|u_n-u\|_{L^\infty_{T'}H^s_x}. 
\end{equation*}
Thus, we deduce from \eqref{unif:conv.2} that there exists $n_0 \in \mathbb N$ such that, for any $j \in \mathbb N$, 
\begin{equation} \label{triangle}
\|u^j-u\|_{L^\infty_{T'}H^s_x} \leq \epsilon+\|u_{n_0}^j-u_{n_0}\|_{L^\infty_{T'}H^s_x} . 
\end{equation}
 
 Now, using $\|\Pi_{>2^{n_0}}(u_0^j-u_0)\|_{H^3} \lesssim 2^{n_0(3-s)} \|\Pi_{>2^{n_0}}(u_0^j-u_0)\|_{H^s}$, the continuity of the flow in $H^3$  (see Proposition \ref{propo:smooth}) ensures the existence of $j_0 \in \mathbb N$ such that, for any $j \ge j_0$,
 \begin{equation} \label{flowH3}
\|u_{n_0}^j-u_{n_0}\|_{L^\infty_{T'}H^s_x} \leq \epsilon . 
\end{equation}
Note that \eqref{flowH3} holds on the time interval $[0,T']$, since the well-posedness time in Proposition \ref{propo:smooth} only depends on $\|u_0\|_{H^s}$ from the uniqueness result in Subsection \ref{uniqueness} and the \emph{a priori} estimate in Lemma \ref{apriori:smooth}. 

Thus, combining \eqref{unif:conv.2} and \eqref{flowH3}, $\{u^j\}$ converges to $u$ in $C([0,T']:H^s)$. Iterating this process a finite number of time, we conclude that $\{u^j\}$ converges to $u$ in $C([0,\widetilde{T}]:H^s)$ where $\widetilde{T}:=A_0 (1+2\|u_0\|_{H^{s'}})^{-\beta_0}$.

\medskip 
This finishes the proof of Theorem \ref{th:main}. 

\subsection{Global well-posedness} In this subsection we prove Theorem \ref{coro:gwp}. 

Let $\alpha \in (\frac23,1]$ and $s>s_\alpha$. Observe that in this case $\widetilde{s}_\alpha <\frac{\alpha}2$. We deduce by applying Theorem \ref{th:main} with $s'=\frac{\alpha}2$ that there exist $A_0>0$ and $\beta_0>0$ such that the IVP associated to \eqref{fKdV} is locally well-posed in $H^s$ on a time interval $[0,T]$ with $T:=A_0 (1+\|u_0\|_{H^{\frac\alpha2}})^{-\beta_0}$. 

On the other hand, it follows by combining the conserved quantities \eqref{Mass} and \eqref{Energy-general}, with the Galgiardo-Nirenberg inequality that  $\|u(t)\|_{H^{\frac\alpha2}} \le C(\|u_0\|_{H^{\frac\alpha2}})$, for all $t \in [0,T]$. Thus, we conclude by iterating the local well-posedness result an arbitrarily large but finite number of time that the IVP associated to \eqref{fKdV} is globally well-posed in $H^s$, which proves (i). 

The proof of (ii) follows directly from the fact that $s_{\alpha}<\frac{\alpha}2$ for $\alpha \in (\frac45,1]$

\section{The periodic case} \label{Sec:7}
In this section we explain the main changes in the periodic setting. Let us first define our function space in this setting. Let $ \T $ be the one dimensional torus. 
First we define our function spaces in the periodic setting. 
For  a $ 2\pi $-periodic function $ \varphi$, we define its space Fourier transform on
  $ \Z$ by
$$
\hat{\varphi}(k)=\int_{\lambda \T} e^{-i \xi x} \, f(x)
\, dx , \quad \forall k \in \Z \quad .
$$
The Lebesgue spaces $ L^q(\T) $, $ 1\le q \le \infty $,  for $ 2\pi$-periodic functions, will be defined as usually by
$$
\|\varphi\|_{L^q}=\Bigl( \int_{\T} |\varphi(x) |^q \, dx\Bigr)^{1/q}
$$
with the obvious modification for $ q=\infty $. \\
The Sobolev spaces $ H^s(\T)$ for $ 2\pi$-periodic functions are endowed with the norm
$$
\|\varphi\|_{H^s}=\|\langle k \rangle^{s}\widehat{\varphi}(k)
\|_{l^2_k} =\|J^s_x \varphi \|_{L^2} \quad ,
$$
where $ \langle \cdot \rangle = (1+|\cdot|^2)^{1/2} $ and $
\widehat{J^s_x \varphi}(k)=\langle k \rangle^{s}
\widehat{\varphi}(k)
$. \\
In the same way, for a function $ u(t,x) $ on $\R\times \T $, we define its space-time Fourier transform by
$$
\hat{u}(\tau,k)={\mathcal F}_{t,x}(u)(\tau,\xi)=\sum_{k\in\Z}  \int_{ \T} e^{-i (\tau
t+ k x)} \, u(t,x) \, dx dt , \quad \forall (\tau,k) \in
\R\times\Z \quad .
$$
For any $ (s,b) \in \R^2 $, we define the  Bourgain space $ X^{s,b} $,  of $ 2\pi$-periodic (in $x$) functions
  as the completion of $ {\mathcal S}(\T\times \R) $ for the norm
\begin{eqnarray*}
\| u \|_{X^{s,b}}  =
 \| \langle \tau-p_{\alpha+1}(k)\rangle^{b}  \langle k \rangle^s
  \hat{u}\|_{L^2_{\tau} l^2_k} \, .
\end{eqnarray*}
Finally, we define the function $ \phi_N $ and $ \psi_L $
 and  the Fourier multipliers $ P_N $ and $ Q_L $ as in Subsection \ref{notation}.  In the periodic setting it holds
 \begin{equation}\label{mm}
u= P_0 u +\sum_{N>0} P_N u
 \end{equation}
 where $ \widehat{P_0 u}(\xi)=\widehat{u}(0) $.

In order to extend our results to the periodic case, the main difficulty is to obtain suitable refined Strichartz estimates, similar to those of Proposition \ref{propse} in the real line case. Recall that the main tool in the proof of this result is the classical linear Strichartz estimate
\begin{equation}\label{linear-stri}
  \|D_x^{\frac{\alpha-1}4}U_\alpha(t)u_0\|_{L^4_t L^\infty_x} \lesssim \|u_0\|_{L^2_x},
\end{equation}
which turns out to be false in the periodic settings. However, a careful reading of the proof of Proposition \ref{propse} in \cite{MPV} shows that estimate \eqref{linear-stri} is only needed for short time intervals of length $N^{-1}$ when the data $u_0$ have spatial frequencies localized around the dyadic number $N$. Fortunately, such an estimate has been proved in the periodic case by Kenig, Ponce and Vega in Theorem 5.2 of \cite{KPV} for the case where $\alpha \in \mathbb Z_+$, $\alpha>1$. We observed however that their proof adapts well to our general setting.  
\begin{proposition}\label{prop:per-stri}
  Let $\alpha>0$, $u_0\in L^2(\T)$ and $N\ge \xi_0$ be a dyadic number, where $\xi_0$ is defined in Hypothesis \ref{hyp1}. Then for any time interval $I$ satisfying $|I|\lesssim N^{-\alpha}$, it holds
  \begin{equation}\label{per-stri}
    \|P_ND_x^{\frac{\alpha-1}4}U_\alpha(t)u_0\|_{L^4_I L^\infty_x} \lesssim \|u_0\|_{L^2_x}.
  \end{equation}
\end{proposition}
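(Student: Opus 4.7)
The plan is to follow the $TT^\star$ strategy of Kenig-Ponce-Vega (Proposition 2.2 in \cite{KPV}), reducing \eqref{per-stri} to a pointwise kernel estimate that is proved via Poisson summation, Van der Corput's lemma, and nonstationary phase. Setting $T_N u_0(t,x) := P_N D_x^{(\alpha-1)/4} U_\alpha(t) u_0(x)$, $TT^\star$ duality implies that \eqref{per-stri} follows from $\|T_N T_N^\star F\|_{L^4_I L^\infty_x} \lesssim \|F\|_{L^{4/3}_I L^1_x}$. Writing $T_N T_N^\star F(t,x) = \int_I (K_N(t-t',\cdot) \ast_x F(t',\cdot))(x)\, dt'$ with
\begin{equation*}
K_N(s,x) = \sum_{k \in \Z} \phi_N^2(k) |k|^{(\alpha-1)/2} e^{i(kx + s\omega_{\alpha+1}(k))},
\end{equation*}
Young's convolution inequality and the one-dimensional Hardy-Littlewood-Sobolev inequality with kernel $|s|^{-1/2}$ (which is the sharp endpoint for $L^{4/3}\to L^4$) further reduce the problem to
\begin{equation*}
\sup_{x \in \T} |K_N(s,x)| \lesssim |s|^{-1/2} \quad \text{for all } s \in \R \text{ with } |s| \lesssim N^{-\alpha}.
\end{equation*}

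To establish this bound I would use Poisson summation to write $K_N(s,x) = 2\pi \sum_{n\in\Z} \widetilde K_N^\R(s, x+2\pi n)$, where
\begin{equation*}
\widetilde K_N^\R(s,y) := \int_\R \phi_N^2(\xi) |\xi|^{(\alpha-1)/2} e^{i\Phi_y(\xi)}\, d\xi, \qquad \Phi_y(\xi) = y\xi + s\omega_{\alpha+1}(\xi),
\end{equation*}
and treat the contributions of $n=0$ and $n\ne 0$ separately. For $n=0$, since $N \ge \xi_0$ places $\supp \phi_N^2$ inside the high-frequency region where Hypothesis \ref{hyp1} gives $|\omega_{\alpha+1}''(\xi)| \sim |\xi|^{\alpha-1} \sim N^{\alpha-1}$, I would split the integral into its positive and negative frequency pieces (on each of which $\omega_{\alpha+1}''$ is single-signed as $\omega_{\alpha+1}$ is odd) and apply Van der Corput's second-derivative lemma with the amplitude bound $\|\phi_N^2|\cdot|^{(\alpha-1)/2}\|_{L^\infty} + TV(\phi_N^2|\cdot|^{(\alpha-1)/2}) \lesssim N^{(\alpha-1)/2}$ to obtain
\begin{equation*}
|\widetilde K_N^\R(s,x)| \lesssim N^{(\alpha-1)/2}(|s|N^{\alpha-1})^{-1/2} = |s|^{-1/2},
\end{equation*}
while in the complementary range $|s| \le N^{-\alpha-1}$ the trivial estimate $|\widetilde K_N^\R(s,x)| \lesssim N^{(\alpha+1)/2} \le |s|^{-1/2}$ already suffices.

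For $n \ne 0$ and $|x|\le \pi$ we have $|y|=|x+2\pi n|\gtrsim |n|$; combined with $|s\omega'_{\alpha+1}(\xi)| \lesssim |s|N^\alpha \lesssim 1$ on $\supp \phi_N^2$, this gives $|\Phi_y'(\xi)| \gtrsim |y|$. Two nonstationary-phase integrations by parts, whose amplitudes remain controlled thanks to the bounds on $\omega''_{\alpha+1}$ and $\omega'''_{\alpha+1}$ from Hypothesis \ref{hyp1}, yield $|\widetilde K_N^\R(s, x+2\pi n)| \lesssim N^{(\alpha-3)/2}|n|^{-2}$, so that the sum over $n \ne 0$ is $\lesssim 1 \le |s|^{-1/2}$ and is absorbed into the main term. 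The main obstacle is not conceptual but bookkeeping for the non-polynomial phase: whereas \cite{KPV} treats polynomial $\omega_{\alpha+1}$ for which higher derivatives are trivial or vanish, here each step must invoke the $C^3$ control from \eqref{hyp1.1} on $\omega_{\alpha+1}''$ and $\omega_{\alpha+1}'''$; fortunately two integrations by parts suffice to give summable decay over the periodic images, keeping the argument within the regularity available.
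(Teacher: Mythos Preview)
Your argument is correct and follows a standard $TT^\star$/kernel route, but the discrete-to-continuous step differs from the paper's. After reducing to the pointwise bound $|K_N(s,x)|\lesssim |s|^{-1/2}$, the paper does \emph{not} invoke Poisson summation. Instead it strips off the smooth amplitude and estimates directly the bare exponential sum $\sum_{k\sim N} e^{i(kx+s\omega_{\alpha+1}(k))}$ via Zygmund's lemma (Lemma~\ref{lem:Zyg}): since $|s|\lesssim N^{-\alpha}$ forces the phase derivative to satisfy $|\varphi'(\xi)|\lesssim 1$ and $\varphi'$ is monotone on $\{\xi\sim N\}$ by Hypothesis~\ref{hyp1}, the discrete sum differs from the corresponding oscillatory integral by $O(1)$, and Van der Corput finishes. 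Your Poisson-summation approach trades this single lemma for an explicit decomposition into the continuous kernel plus its $2\pi$-translates, controlling the latter by nonstationary phase; this is equally valid and makes transparent where the short-time restriction $|I|\lesssim N^{-\alpha}$ enters (it both ensures the $n=0$ Van der Corput bound and keeps $|s\,\omega_{\alpha+1}'|$ small so the phase is nonstationary for $n\ne 0$). A small caveat: for $|n|=O(1)$ the bound $|\Phi_y'|\gtrsim |y|$ requires the implicit constant in $|I|\lesssim N^{-\alpha}$ to be small; otherwise treat finitely many small $|n|$ by Van der Corput as for $n=0$. Your route needs the $C^3$ control on $\omega_{\alpha+1}$ from \eqref{hyp1.1} for two integrations by parts, while the paper's uses only $C^2$ through the monotonicity of $\varphi'$ and $|\varphi''|\sim |s|N^{\alpha-1}$; both stay within Hypothesis~\ref{hyp1}.
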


As mentioned above, the proof of Proposition \ref{linear-stri} follows closely the one of Theorem 5.2 in \cite{KPV}.   For the convenience of the reader, we give the details below. 
We will need the following lemma due to Zygmund.
\begin{lemma}[\cite{Z}, p. 198]\label{lem:Zyg}
  Let $f$ be a real-valued function defined on an interval $(a,b)$ such that $f'$ is monotone and $|f'|\le\frac 12$. Then, we have
  $$
  \left| \sum_{k\in (a,b)} e^{2\pi if(k)} - \int_a^b e^{2\pi i f(\xi)}d\xi\right|\lesssim 1,
  $$
  where the implicit constant is independent of $a<b$ and $f$. 
\end{lemma}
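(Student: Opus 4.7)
My plan is to compare sum and integral via first-order Euler--Maclaurin, Fourier expand the sawtooth, and then estimate each Fourier mode by an integration by parts that exploits both $|f'|\le\tfrac12$ and the monotonicity of $f'$.

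First, by replacing $(a,b)$ with $(\lceil a\rceil,\lfloor b\rfloor)$, one changes both the sum and the integral by $O(1)$, so I may assume $a,b\in\Z$. Setting $g(\xi)=e^{2\pi i f(\xi)}$ and $P_1(\xi)=\{\xi\}-\tfrac12$, the Euler--Maclaurin identity gives
\[ \sum_{k=a}^{b} g(k)-\int_a^b g(\xi)\,d\xi=\tfrac12\bigl(g(a)+g(b)\bigr)+\int_a^b P_1(\xi)\,g'(\xi)\,d\xi. \]
The boundary term is $O(1)$, and substituting $g'=2\pi i f'g$ reduces the problem to bounding $|\int_a^b P_1(\xi) f'(\xi) e^{2\pi i f(\xi)}\,d\xi|\lesssim 1$.

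I would now expand $P_1(\xi)=-\sum_{n\ge 1}\sin(2\pi n\xi)/(\pi n)$ (in the sense of symmetric partial sums) and show that the $n$-th Fourier mode contributes $O(1/n^2)$. Writing $2i\sin(2\pi n\xi)=e^{2\pi i n\xi}-e^{-2\pi i n\xi}$, it suffices to estimate $J_n^\pm:=\int_a^b f'(\xi) e^{2\pi i(f(\xi)\pm n\xi)}\,d\xi$ for $n\ge 1$. Since $|f'|\le\tfrac12$, the phase derivative $f'\pm n$ satisfies $|f'\pm n|\ge n-\tfrac12\ge\tfrac12$ and is monotone. Writing $f'=(f'\pm n)\mp n$ gives a first IBP
\[ J_n^\pm=\frac{1}{2\pi i}\bigl[e^{2\pi i(f\pm n\xi)}\bigr]_a^b \mp n\int_a^b e^{2\pi i(f\pm n\xi)}\,d\xi, \]
and a second IBP on the remaining integral, using monotonicity of $f'$ to compute $\int|f''|/(f'\pm n)^2\,d\xi=|1/(f'(a)\pm n)-1/(f'(b)\pm n)|=O(1/n^2)$, yields the boundary contribution $[e^{2\pi i(f\pm n\xi)}/(2\pi i(f'\pm n))]_a^b$ plus an error of order $1/n$.

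The crucial observation is that combining the boundary terms from the two IBPs produces, at each endpoint $\xi_*$, a factor of the form $f'(\xi_*)/(f'(\xi_*)\pm n)$, bounded by $(1/2)/(n-1/2)=O(1/n)$ thanks to $|f'|\le\tfrac12$. Hence $|J_n^\pm|=O(1/n)$, the $n$-th Fourier mode contributes $O(1/n^2)$, and the series converges absolutely to $O(1)$. The main difficulty is to identify this cancellation: each of the two boundary terms is individually of size $O(1)$, and only their combination carries the extra factor $1/n$; missing it yields only $\sum 1/n$, which diverges. Justifying the exchange of summation and integration is then automatic from the $O(1/n^2)$ bound, and the resulting constant is independent of $a$, $b$, and $f$.
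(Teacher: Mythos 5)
The paper does not prove this lemma; it simply cites Zygmund. Your argument is essentially the classical van der Corput/Zygmund proof, and you have correctly located its crux: each endpoint boundary term from Euler--Maclaurin and each endpoint boundary term from the stationary-phase expansion of $\int e^{2\pi i(f\pm n\xi)}$ is individually only $O(1)$, and it is precisely the algebraic cancellation $1\mp\frac{n}{f'\pm n}=\frac{f'}{f'\pm n}=O(1/n)$ (using $|f'|\le\tfrac12$) that rescues the summation in $n$. The classical presentation usually pairs the $+n$ and $-n$ modes instead of pairing the two IBP boundary terms, but the cancellation is the same and your reorganization is perfectly valid.

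Two small technical points would need attention to make this fully rigorous. First, you write $\int |f''|/(f'\pm n)^2\,d\xi=|1/(f'(a)\pm n)-1/(f'(b)\pm n)|$, but $f'$ is only assumed monotone, so $f''$ need not exist; the correct statement is that $1/(f'\pm n)$ is monotone with total variation $|1/(f'(a)\pm n)-1/(f'(b)\pm n)|\lesssim n^{-2}$, and the ``second IBP'' should be phrased as a Riemann--Stieltjes integration by parts against $d\bigl(1/(f'\pm n)\bigr)$ (equivalently, the second mean value theorem), exactly as in the standard proof of the first derivative test. Second, the exchange $\int_a^b P_1 f' e^{2\pi i f}=\sum_n\int_a^b(\text{$n$-th mode})f'e^{2\pi if}$ is not ``automatic from the $O(1/n^2)$ bound''; it follows because the partial sums of the sawtooth Fourier series are uniformly bounded, so one may pass to the limit by dominated convergence. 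With these two remarks incorporated, the proof is complete and the constant is manifestly independent of $a$, $b$, and $f$.
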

\begin{proof}[Proof of Proposition \ref{prop:per-stri}]
  Since $U_\alpha(t)$ is a unitary group in $L^2(\T$), we may always assume that $I=]0,T[$ with $T\lesssim N^{-\alpha}$. Using the classical duality P. Thomas argument, \eqref{per-stri} will follows from estimate
  $$
  \|P_ND_x^{\frac{\alpha-1}4}U_\alpha(t)u_0\|_{L^\infty_x} \lesssim t^{-\frac 12} \|u_0\|_{L^1}
  $$
  for all $t\in I$. From Young and Bernstein inequalities, it will suffice to prove that 
  $$
  \left|\sum_{k\sim N} e^{i(t\omega_{\alpha+1}(k)+kx)}\right| \lesssim N^{\frac{1-\alpha}2} t^{-\frac 12} 
  $$
 for any $t\in I$ and $x\in\T$. Setting $\varphi(\xi) = \frac 1{2\pi}(t\omega_{\alpha+1}(\xi)+x\xi)$, we get from Hypothesis \ref{hyp1} that $\varphi'$ is a continuous monotonic function on $D_N:=\{\xi\sim N\}$. Moreover, it holds that $|\varphi'(\xi)|\lesssim 1+tN^\alpha\lesssim 1$ for all $t\in I$, where we use the assumption that $0<t<T \lesssim N^{-\alpha}$. Therefore, there exist a subdivision $D_N=\cup_{j=1}^n [\xi_j, \xi_{j+1}]$ with $n \lesssim 1$  and integers $|p_1|,\ldots, |p_n|\lesssim 1$ such that $|\varphi'(\xi)-p_j|\le \frac 12$ for all $\xi \in  [\xi_j, \xi_{j+1}]$, $j=1,\ldots, n$. It follows then that
  $$
  \left|\sum_{k\sim N} e^{2\pi i \varphi(k)}\right| = \left| \sum_{j=1}^n \sum_{k\in (\xi_j,\xi_{j+1})} e^{2\pi i\varphi(k)}\right| \le n \sup_{1\le j\le n} \left| \sum_{k\in(\xi_j,\xi_{j+1})} e^{2\pi i(\varphi(k)-kp_j)}\right| .
  $$
  By virtue of Lemma \ref{lem:Zyg}, we infer
  $$
  \left|\sum_{k\sim N} e^{2\pi i \varphi(k)}\right| \lesssim 1+ \sup_{1\le j\le n} \left|\int_{\xi_j}^{\xi_{j+1}} e^{2\pi if(\xi)}d\xi\right|
  $$
  where $f(\xi) = \varphi(\xi)-p_j\xi$. Observing that $|f''(\xi)|\sim tN^{\alpha-1}$, we conclude thanks to Van der Corput lemma that
  $$
  \left|\sum_{k\sim N} e^{2\pi i \varphi(k)}\right| \lesssim 1 + (tN^{\alpha-1})^{-\frac 12} \lesssim N^{\frac{1-\alpha}2} t^{-\frac 12},
  $$
which finishes the proof of Proposition \ref{prop:per-stri}.   
\end{proof}

With this proposition in hand, the rest of the proof is similar to, and even simpler than, the case of the real line, since we do not need to worry about the low-frequency contributions. Indeed, as usually, we  notice that $ u $ is solution to \eqref{fKdV}  then its spatial mean value $ \int_{\T}\hspace*{-4mm} - \, u $ is a conserved quantity. Therefore $ u$ is a solution to \eqref{fKdV} if and only if $ \tilde{u}(t,x)=u(t,x- t \int_{\T}\hspace*{-4mm} - \,  u_0) - \int_{\T}\hspace*{-4mm} - \,  u_0  $ is a solution to  \eqref{fKdV}  with $ \Lambda_{\alpha+1} $ replaced by $ \tilde{\Lambda}_{\alpha+1}= \Lambda_{\alpha+1} -(\int_{\T}\hspace*{-4mm} - \, u_0) \partial_x $. Since $ \tilde{\Lambda}_{\alpha+1} $ satisfies Hypothesis \ref{hyp1} as soon as $\Lambda_{\alpha+1} $ does (with possibly a different $ \xi_0$), by this change of unknown, we can always reduce to initial data with mean value zero. We can thus avoid  to add a weight on the spatial low frequencies in the function space where we put the difference of two solutions. In this framework, outside resonances of order $ 4$ (points of cancellation of the resonance function $\Omega_4 $), we get the same estimates than those on the solutions and on the difference of solutions, having the same mean value,   obtained in Section \ref{Sec:4} and \ref{Sec:5}. 

 Now, according to Proposition \ref{est-Omega4} or direct calculations, for any $(k_1,k_2,k_3)\in \Z^3 $, 
$$
\Omega_4(k_1,k_2,k_3)=0 \Leftrightarrow (k_1+k_2)(k_1+k_3)(k_2+k_3)=0 \; .
$$
We thus only have to take care of these resonance points in the estimates on $ {\mathcal J}_N $ in Proposition \ref{prop:Es} and $\tilde{\mathcal K}_N,\; \tilde{\mathcal L}_N $ in Proposition \ref{prop:E_tilde_sigma}. It is worth noticing that, in the crucial estimates 
\eqref{l2tri-basic} and \eqref{l2tri.1}, it holds $P=1 $ on the set of resonance points.
 \vspace*{2mm} \\
{\bf $\bullet $ Estimate on $ {\mathcal J}_N $.} In view of the decomposition \eqref{a4}, $ a_4 $ does vanish as soon as $\Omega_4(k_1,k_2,k_3)=0$
 and thus the resonance points do not contribute to $ {\mathcal J}_N $.\\
 {\bf $\bullet $ Estimate on $ {\mathcal K}_N $.} In view of the decomposition \eqref{def:a4tilde}, $ \tilde{a}_4 $ does vanish as soon as $
 k_1+k_2=0 $. Since $k_1$ and $ k_2$ play a symmetric role in $ \tilde{a}_4 $, it thus suffices to consider  the set 
 $$
 {\mathcal R}:=\{(k_1,k_2,k_3)\in \Z^3, \;  k_1+k_3=0, \; k_1+k_2\neq 0 \}  \; .
 $$ According to  \eqref{def:a4tilde}, on ${\mathcal R}$ it holds 
 \begin{equation}\label{a4R}
\tilde{a}_4(k_1,k_2,k_3)=(k_2+k_3)\Bigl(\tilde{b}_3(k_2+k_3,k_1)-\tilde{b}_3(k_2+k_3,-k_2)\Bigr) -(k_1+k_2)\tilde{b}_3(k_1,k_2) \; . 
 \end{equation}
 Let us come back to  the restriction on the set ${\mathcal R} $ of the estimates on $ {\mathcal K}_N $ in the proof of Proposition \ref{prop:E_tilde_sigma}. We review one by one the different regions introduced in the proof of  Proposition \ref{prop:E_tilde_sigma}.\\
{\bf Region} A : Then, we proceed exactly as in    Proposition \ref{prop:E_tilde_sigma}.  We have $ \|\widetilde{\chi}\|_{L^\infty} \lesssim N^{2\sigma+1-\alpha}$ and thus it follows from \eqref{l2tri-basic}  (with $ P=1$) that
$$
|\widetilde{\mathcal{K}}_{N,A}^{\mathcal R}|\lesssim \sum_{0\le M_1,M_2\lesssim N \atop 0\le M_{min}\lesssim 1} N^{2\sigma+1-\alpha} \|w_{\sim N}\|_{Z^0}^2 \|z_{\sim N}\|_{Y^0}^2 \lesssim  N^{-2s+1-\alpha+0_+} \|w\|_{Z^\sigma}^2 \|z\|_{Y^s}^2,
$$
that is acceptable for $ s>\frac{1-\alpha}{2} $. \\
{\bf Region} B : There is no resonances in this region since $M_{min} \sim N $.\\
{\bf Region} C : In the subregions $ C_1 $ and $C_2 $, $\Omega_4=0 \Rightarrow k_1+k_2=0 $. Moreover, in 
 the subregion $ C_3 $, it holds $M_{min}\sim N_3\ge 1 $. Hence, these three subregions do not contribute. Now in the subregion $ C_4 $ where $ N_3\lesssim N_1\ll N_2 \sim N_4=N $, \eqref{a4R} ensures that $|\tilde{a}_4| \lesssim N^{2\sigma+1-\alpha} $.
 Therefore, by \eqref{l2tri-basic}  (with $ P=1$), we get 
 \begin{align*}
  |\widetilde{\mathcal{K}}_{N,C_4}^{\mathcal R}| &\lesssim \sum_{N_3\lesssim N_1\ll N}\sum_{M_2\lesssim  1} N^{2\sigma+1-\alpha} \|w_{N_1}\|_{Z^0} \|w_{\sim N}\|_{Z^0} \|z_{N_3}\|_{Y^0} \|z_{\sim N}\|_{Y^0} \\
  &\lesssim N^{-s+1-\alpha+0_+}\|w\|_{Z^\sigma}^2 \|z\|_{Y^s}^2,
\end{align*}
that is acceptable for $s>1-\alpha $.\\
{\bf $\bullet $ Estimate on $ \widetilde{{\mathcal L}}_N $.}
 We notice that, according to \eqref{a44}, $ \tilde{\tilde{a}}_4(k_1,k_2,k_3) =0 $ whenever $ k_1+k_2=0$ and thus, as above, we only have to care about the contribution of ${\mathcal R} $. Note that on ${\mathcal R} $ we must have $ N_1 \sim N_3 $ and $
 N_2 \sim N_4$. Therefore, in view of \eqref{chitt},  \eqref{l2tri-basic}  (with $ P=1$) leads to 
  \begin{align*}
  |\widetilde{\mathcal{L}}_{N}^{\mathcal R}| &\lesssim\sum_{N_1\sim N_3, N_2\sim N_4 \atop N_{max}=N} \sum_{1\le M_1\lesssim N, M_2\lesssim 1} N_{min}^{-1} N^{2\sigma+2-\alpha} \prod_{i=1}^4 \|w_{N_i}\|_{L^\infty_T L^2_x} \\
  &\lesssim\sum_{N_1\sim N_3, N_2\sim N_4 \atop N_{max}=N} \sum_{1\le M_1\lesssim N, M_2\lesssim 1} N_{min}^{-1-2\sigma} N^{2(\sigma-s)+2-\alpha}\|w\|_{Z^\sigma}^2 \|w\|_{Y^s}^2 \\
  &\lesssim N^{2(\sigma-s)+2-\alpha} \|w\|_{Z^\sigma}^2 \|w\|_{Y^s}^2,
\end{align*}
where we used that $ \sigma>-1/2$. This is acceptable for $ \sigma<s-1+\alpha/2 $ that is satisfied for $ \sigma<s-\frac32+\alpha $ as soon as $ \alpha\le 1$. This completes the proof of Propositions \ref{prop:Es} and  \ref{prop:E_tilde_sigma} in the periodic case.

Finally the proofs in  Section \ref{Sec:6} are exactly the same. We only have to care about the fact that we can not assume the sequence $\{u_0^j\} $ to be mean-value zero in Subsection \ref{64} since the mean-value of $
 u_0^j $ may depend on $ j$. However,  for any $j\in \N$, $u_0^j$ and $\Pi_{\le 2^n}u_0^j$ have the same mean value and thus \eqref{triangle} still holds. Then \eqref{flowH3} follows as in the real line case since Proposition \ref{propo:smooth} is still valid  in the periodic setting (cf \cite{ABFS89}).
\appendix 

\section{Symbol estimates} \label{app:symbols}
\subsection{Estimates for the resonance function}
For $(\xi_1,\ldots, \xi_n)\in \Gamma^{n}$, the resonance function of order $n$ associated with \eqref{fKdV} is defined by
$$
\Omega_n(\xi_1,\ldots, \xi_n) = \sum_{i=1}^n \omega_{\alpha+1}(\xi_i)
$$
where $\omega_{\alpha+1}$  satisfies Hypothesis \ref{hyp1}.
 Note that $\Omega_n$ is symmetric:
\begin{equation}\label{sym-Omega}
  \Omega_n(\xi_1,\ldots,\xi_n) = \Omega_n(\xi_{\sigma(1)},\ldots,\xi_{\sigma(n)}) \quad \textrm{ for any permutation }\sigma\in \mathcal{S}_n.
\end{equation}
Moreover, it satisfies
\begin{equation}\label{prop-Omega}
  \Omega_n(-(\xi_1,\cdots,\xi_n)) = -\Omega_n(\xi_1,\cdots,\xi_n).
\end{equation}
\subsubsection{The case $n=3$}
We recall the well-known result:
\begin{proposition}\label{est-Omega3}[Lemma 2.1 in \cite{MV}]
  Let $\alpha>0$,  $(\xi_1,\xi_2, \xi_3)\in \Gamma^3$ and $N_1,N_2,N_3$ be dyadic numbers such that $N_i\sim |\xi_i|$ for $i=1,2,3$. Assume moreover $N_{max} \gg \xi_0$. Then 
  \begin{equation}\label{est-Omega3.0}
    |\Omega_3(\xi_1,\xi_2,\xi_3)| \sim N_{min}N_{max}^\alpha
  \end{equation}
  where $N_{min}=\min_{i=1,2,3}N_i$ and $N_{max}=\max_{i=1,2,3}N_i$.
\end{proposition}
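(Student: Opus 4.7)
By the permutation symmetry \eqref{sym-Omega} I may assume $|\xi_1|\le|\xi_2|\le|\xi_3|$, so that $N_{min}=N_1$ and $N_{max}=N_3$. The constraint $\xi_3=-(\xi_1+\xi_2)$ combined with $|\xi_2|\le|\xi_3|$ forces $\xi_1$ and $\xi_2$ to share the same sign (otherwise $|\xi_1+\xi_2|=||\xi_2|-|\xi_1||<|\xi_2|$) and yields $N_2\sim N_3$; I assume WLOG $\xi_1,\xi_2>0$. Using the oddness of $\omega_{\alpha+1}$,
\begin{equation*}
\Omega_3 = \omega_{\alpha+1}(\xi_1) + \omega_{\alpha+1}(\xi_2) - \omega_{\alpha+1}(\xi_1+\xi_2).
\end{equation*}
The upper bound $|\Omega_3|\lesssim N_1 N_{max}^{\alpha}$ is immediate from the mean value theorem applied twice: writing $\Omega_3=\xi_1[\omega_{\alpha+1}'(\eta_1)-\omega_{\alpha+1}'(\eta_2)]$ for some $\eta_1\in(0,\xi_1)$ and $\eta_2\in(\xi_2,\xi_1+\xi_2)$, Hypothesis \ref{hyp1} together with continuity of $\omega_{\alpha+1}'$ on the compact set $[-\xi_0,\xi_0]$ gives $|\omega_{\alpha+1}'(\eta_i)|\lesssim N_{max}^{\alpha}$ for $i=1,2$.

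The lower bound rests on the key observation that Hypothesis \ref{hyp1} with $\beta=2$ combined with continuity of $\omega_{\alpha+1}''$ on $\mathbb{R}\setminus\{0\}$ forces $\omega_{\alpha+1}''$ to have constant sign on $(\xi_0,+\infty)$, since $|\omega_{\alpha+1}''(\eta)|\sim|\eta|^{\alpha-1}>0$ there. When $N_1\ll N_{max}$, I rewrite $\Omega_3=\int_0^{\xi_1}[\omega_{\alpha+1}'(s)-\omega_{\alpha+1}'(s+\xi_2)]ds$ and note that on the integration domain $s+\xi_2\sim N_{max}$; hence $\omega_{\alpha+1}'(s+\xi_2)$ has constant sign and size $\sim N_{max}^{\alpha}$, whereas $|\omega_{\alpha+1}'(s)|\lesssim\max(1,N_1^{\alpha})\ll N_{max}^{\alpha}$. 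The integrand therefore has constant sign and size $\sim N_{max}^{\alpha}$, yielding $|\Omega_3|\sim N_1 N_{max}^{\alpha}$.

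When $N_1\sim N_{max}=:N$, I instead use the double integral representation
\begin{equation*}
\Omega_3 = -\int_0^{\xi_1}\!\!\int_0^{\xi_2}\omega_{\alpha+1}''(s+u)\,du\,ds
\end{equation*}
and split the domain into $A:=\{(s,u)\in[0,\xi_1]\times[0,\xi_2]:s+u>2\xi_0\}$ and its complement $A^c$. On $A$, the integrand has constant sign with $|\omega_{\alpha+1}''(s+u)|\sim(s+u)^{\alpha-1}$, so that $|\int_A\omega_{\alpha+1}''(s+u)\,du\,ds|\sim\int_A(s+u)^{\alpha-1}du\,ds\sim N^{\alpha+1}$ by direct computation. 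The complementary piece, after integrating out one variable, reduces to a difference of values of $\omega_{\alpha+1}'$ at arguments lying in the bounded set $[0,2\xi_0]$, hence is $O(1)$. Since $N\gg\xi_0$ (WLOG $\xi_0\ge1$), the main piece dominates and $|\Omega_3|\sim N^{\alpha+1}\sim N_{min}N_{max}^{\alpha}$. The main obstacle is precisely this last regime: naive first-order MVT yields only the matching upper bound, with both terms of comparable size capable of cancelling; preventing this cancellation requires invoking the second-order information encoded in Hypothesis \ref{hyp1}, namely the non-vanishing of $\omega_{\alpha+1}''$ at large arguments.
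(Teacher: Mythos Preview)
Your proof is correct and follows the standard route: mean value theorem for the upper bound, and the constant sign of $\omega_{\alpha+1}'$ (resp.\ $\omega_{\alpha+1}''$) on $(\xi_0,+\infty)$, guaranteed by Hypothesis~\ref{hyp1} with $\beta=1$ (resp.\ $\beta=2$), to prevent cancellation in the lower bound. The paper itself does not prove this proposition (it cites \cite{MV}), but the same double-integral technique you use in the case $N_1\sim N_{max}$ appears verbatim in the paper's proof of Proposition~\ref{est-Omega4}, in particular around \eqref{sign:Omega}--\eqref{sign.Omega.1}, so your argument is fully consistent with the paper's methods.

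Two cosmetic remarks: the ``WLOG $\xi_0\ge 1$'' is not really a WLOG but is harmless since the $O(1)$ bound on the $A^c$ piece versus the $N^{\alpha+1}$ main term only needs $N\gg \xi_0$; and in the case $N_1\ll N_{max}$ the constant sign of $\omega_{\alpha+1}'(s+\xi_2)$ comes from Hypothesis~\ref{hyp1} with $\beta=1$ (not $\beta=2$), which you use implicitly but do not state.
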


\subsubsection{The case $n=4$}
We will prove the following estimate:
\begin{proposition}\label{est-Omega4}
  Let  $\alpha>0$,  $(\xi_1,\xi_2,\xi_3,\xi_4)\in \Gamma^4$ and $N_1,N_2,N_3,N_4$ and $M_1,M_2,M_3$ be dyadic numbers such that $N_i\sim |\xi_i|$ for $i=1,2,3,4$ and $M_1\sim|\xi_2+\xi_3|$, $M_2\sim |\xi_1+\xi_3|$, $M_3\sim |\xi_1+\xi_2|$. Assume moreover $N_{max}  \gg \xi_0$. Then we have
  \begin{equation}\label{est-Omega4.0}
    |\Omega_4(\xi_1,\xi_2,\xi_3,\xi_4)| \sim M_{min}M_{med}N_{max}^{\alpha-1}
  \end{equation}
  with the usual notation.
\end{proposition}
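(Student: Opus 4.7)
The plan is to combine an algebraic identity with a case analysis on the relative sizes of the $N_i$'s and, in the most degenerate regime, a second-order Taylor expansion. By the symmetry \eqref{sym-Omega} we may assume $N_1\le N_2\le N_3\le N_4$; the constraint $\xi_1+\xi_2+\xi_3+\xi_4=0$ together with $N_{max}\gg \xi_0$ then forces $N_3\sim N_4$. A preliminary observation is that $M_{max}\sim N_{max}$: after relabeling so that $M_j=|\xi_j+\xi_4|$ (using the sum-zero constraint), we get $M_1+M_2+M_3\ge |(\xi_1+\xi_4)+(\xi_2+\xi_4)+(\xi_3+\xi_4)|=2|\xi_4|$. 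The key algebraic identity, valid for any pairing $\{i,j,k,l\}=\{1,2,3,4\}$ thanks to the oddness of $\omega_{\alpha+1}$ and $\sum_\ell \xi_\ell=0$, reads
\[
\Omega_4 = -\Omega_3(\xi_i,\xi_j) - \Omega_3(\xi_k,\xi_l),\qquad \Omega_3(a,b):=\omega_{\alpha+1}(a+b)-\omega_{\alpha+1}(a)-\omega_{\alpha+1}(b).
\]

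We then distinguish three regimes. If $N_1\le N_2\ll N_3\sim N_4$, the $M_j$'s are forced to satisfy $M_1,M_2\sim N_{max}$ and $M_{min}=M_3=|\xi_3+\xi_4|$; using the pairing $(1,2),(3,4)$, Proposition \ref{est-Omega3} gives $|\Omega_3(\xi_3,\xi_4)|\sim M_{min}N_{max}^\alpha$, while a separate application (supplemented by Remark \ref{Rem:symb} in the low-frequency case) shows $|\Omega_3(\xi_1,\xi_2)|\lesssim \min(N_1,M_3)N_2^\alpha \ll M_{min}N_{max}^\alpha$. Since $M_{med}\sim N_{max}$, this yields $|\Omega_4|\sim M_{min}N_{max}^\alpha = M_{min}M_{med}N_{max}^{\alpha-1}$. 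If $N_1\ll N_2\sim N_3\sim N_4$, all three $M_j$'s are $\sim N_{max}$, and Proposition \ref{est-Omega3} applied to the pairing $(2,3),(1,4)$ yields $|\Omega_3(\xi_2,\xi_3)|\sim N_{max}^{\alpha+1}$ with $\Omega_3(\xi_1,\xi_4)$ of lower order, giving $|\Omega_4|\sim N_{max}^{\alpha+1}=M_{min}M_{med}N_{max}^{\alpha-1}$.

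The delicate regime is $N_1\sim N_2\sim N_3\sim N_4=:N$. After relabeling, assume $M_3\le M_2\le M_1$, so that $M_1\sim N$ by the preliminary observation. When $M_3\sim N$, a direct application of Proposition \ref{est-Omega3} to either pairing yields $|\Omega_4|\sim N^{\alpha+1}\sim M_{min}M_{med}N^{\alpha-1}$. When $M_3\ll N$, the two $\Omega_3$ terms in the pairing $(1,2),(3,4)$ are individually $\sim M_3 N^\alpha$ and must almost cancel. Setting $\epsilon:=\xi_1+\xi_2=-(\xi_3+\xi_4)$ with $|\epsilon|=M_3$, a second-order Taylor expansion of $\omega_{\alpha+1}$ around $\xi_1$ and $\xi_3$ gives
\[
\Omega_4 = \epsilon\bigl[\omega_{\alpha+1}'(\xi_1)-\omega_{\alpha+1}'(\xi_3)\bigr] + R, \qquad |R|\lesssim \epsilon^2 N^{\alpha-1}\le M_3 M_2 N^{\alpha-1}.
\]
The smallness of $M_2=|\xi_1+\xi_3|$ forces $\xi_1$ and $\xi_3$ to have opposite signs with $\bigl||\xi_1|-|\xi_3|\bigr|\sim M_2$, and Hypothesis \ref{hyp1} then yields $|\omega_{\alpha+1}'(\xi_1)-\omega_{\alpha+1}'(\xi_3)|\sim M_2 N^{\alpha-1}$, leading to the sharp bound $|\Omega_4|\sim M_3 M_2 N^{\alpha-1}=M_{min}M_{med}N_{max}^{\alpha-1}$.

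The main obstacle will be this last sub-case: the first-order cancellation between the two $\Omega_3$ terms must be extracted via a second-order Taylor expansion, and the leading contribution must be estimated \emph{sharply} by exploiting an additional sign-based cancellation hidden in $\omega_{\alpha+1}'(\xi_1)-\omega_{\alpha+1}'(\xi_3)$; it is precisely this extra factor $M_{med}/N$ that produces the claimed exponent. Controlling the Taylor remainders uniformly in the relative sizes of the $M_j$'s, and verifying the sign structure without over-relying on the prototype $\omega_{\alpha+1}(\xi)=|\xi|^\alpha\xi$, constitutes the most technical part of the argument.
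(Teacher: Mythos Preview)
Your overall plan---case analysis on the $N_i$'s, the splitting $\Omega_4=-\Omega_3(\xi_i,\xi_j)-\Omega_3(\xi_k,\xi_l)$, and a Taylor expansion in the most degenerate regime---is sound, and your treatment of the first two cases ($N_{thd}\ll N_{max}$ and $N_{min}\ll N_{thd}\sim N_{max}$) matches the paper's. However, your handling of the case $N_1\sim N_2\sim N_3\sim N_4=:N$ has two genuine gaps.

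\textbf{Gap 1: the sub-case $M_{min}\sim N$.} You write that ``a direct application of Proposition~\ref{est-Omega3} to either pairing yields $|\Omega_4|\sim N^{\alpha+1}$.'' But Proposition~\ref{est-Omega3} only tells you that each $\Omega_3$ term has size $\sim N^{\alpha+1}$; their \emph{sum} could in principle be much smaller by cancellation. You need a separate argument to rule this out. The paper does this by splitting on the sign pattern: when three of the $\xi_i$ share a sign, it shows (via the sign of $\omega_{\alpha+1}''$, see \eqref{sign:Omega}) that both $\Omega_3$ terms in a suitable pairing have the \emph{same} sign; when the signs are two--and--two, it uses the exact integral representation
\[
\Omega_4 = -\int_0^{\xi_2+\xi_3}\!\int_{-\xi_3}^{\xi_1}\omega_{\alpha+1}''(\theta+\mu)\,d\mu\,d\theta,
\]
where the integrand has constant sign $\sim N^{\alpha-1}$ and the rectangle has area $\sim M_{min}M_{med}$. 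Either ingredient is missing from your sketch, and without one of them the lower bound fails.

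\textbf{Gap 2: the remainder when $M_3\sim M_2\ll N$.} Your Taylor bound $|R|\lesssim \epsilon^2 N^{\alpha-1}=M_3^2 N^{\alpha-1}$ is only $\le M_3 M_2 N^{\alpha-1}$, not $\ll$, so when $M_3\sim M_2$ you cannot conclude the lower bound $|\Omega_4|\gtrsim M_3M_2N^{\alpha-1}$. This is fixable: writing the remainder explicitly as $\tfrac12\epsilon^2\bigl[\omega_{\alpha+1}''(\xi^*_{12})+\omega_{\alpha+1}''(\xi^*_{34})\bigr]$ and using that $\xi^*_{12},\xi^*_{34}$ have opposite signs (so $\omega_{\alpha+1}''$, being odd, nearly cancels) together with $\bigl||\xi^*_{12}|-|\xi^*_{34}|\bigr|\lesssim M_2$ gives the sharper bound $|R|\lesssim M_3^2 M_2 N^{\alpha-2}\ll M_3M_2N^{\alpha-1}$. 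The paper's double-integral representation sidesteps this issue entirely.

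\textbf{Minor point.} In the sub-case $M_3\ll N$, your claim that ``the smallness of $M_2$ forces $\xi_1,\xi_3$ to have opposite signs'' is imprecise: $M_2$ need not be small. What actually forces opposite signs is your relabeling $M_2\le M_1$ together with $M_3\ll N$ (one checks directly that the same-sign configuration would violate $M_2\le M_1$). With opposite signs the identity $|\omega_{\alpha+1}'(\xi_1)-\omega_{\alpha+1}'(\xi_3)|\sim\bigl||\xi_1|-|\xi_3|\bigr|N^{\alpha-1}=M_2N^{\alpha-1}$ is then correct for all values of $M_2$.

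In summary, your Taylor-expansion route for $M_{min}\ll N$ is a legitimate (and pleasant) alternative to the paper's double-integral, once Gap~2 is patched. But for $M_{min}\sim N$ you still need the paper's sign argument; there is no shortcut there.
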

\begin{proof}
  We use the notation $N_{min}\le N_{thd}\le N_{sub}\le N_{max}$ to denote $N_1,\ldots,N_4$ and recall that $N_{sub}\sim N_{max}\sim M_{max}$. We split the proof into three parts.
  
  \noindent
  \underline{Case $N_{thd} \ll N_{max}$}. By \eqref{sym-Omega}, we may assume
  $$
  N_1, N_2 \ll N_3\sim N_4,
  $$
  which implies $\xi_3\xi_4 < 0$. In this case we have $M_{min}=M_3$ and $M_{med}\sim N_{max}$.\\
  From the mean value theorem, we infer
  $$
  |\omega_{\alpha+1}(\xi_1)+\omega_{\alpha+1}(\xi_2)| = |\omega_{\alpha+1}(\xi_1)-\omega_{\alpha+1}(-\xi_2)| = |(\xi_1+\xi_2)\omega_{\alpha+1}'(\xi^\ast)|\sim M_3 |\omega_{\alpha+1}'(\xi^\ast)| 
  $$
  with $|\xi^\ast|\lesssim N_1\vee N_2\ll N_{max}$. Therefore,
  \begin{equation}\label{est-Omega4.1}
  |\omega_{\alpha+1}(\xi_1)+\omega_{\alpha+1}(\xi_2)| \ll M_3N_{max}^\alpha.
  \end{equation}
  In the same way, since $\xi_3\xi_4<0$ we can find $\xi^\ast$ with $|\xi^\ast|\sim N_{max}$ such that
  \begin{equation}\label{est-Omega4.2}
  |\omega_{\alpha+1}(\xi_3)+\omega_{\alpha+1}(\xi_4)| = |(\xi_3+\xi_4)\omega_{\alpha+1}'(\xi^\ast)| \sim M_3 N_{max}^\alpha.
  \end{equation}
  Hence, \eqref{est-Omega4.0} is deduced by combining \eqref{est-Omega4.1} and \eqref{est-Omega4.2}.
  
  \noindent
  \underline{Case $N_{min}\ll N_{thd} \sim N_{max}$}. By \eqref{sym-Omega}, we may assume
  $$
  N_1\ll N_2\sim N_3\sim N_4.
  $$
  It follows that $M_{min}\sim M_{max}\sim N_4$. Then, rewrite $\Omega_4$ as 
  $$
  \Omega_4(\xi_1,\xi_2,\xi_3,\xi_4) = \Omega_3(\xi_1,\xi_2) + \Omega_3(\xi_3,\xi_4).
  $$
  From Proposition \ref{est-Omega3}, we get $|\Omega_3(\xi_1,\xi_2)| \sim N_1 N_2^\alpha \ll N_4^{\alpha+1}$ and $|\Omega_3(\xi_3,\xi_4)|\sim N_4^{\alpha+1}$. Estimate \eqref{est-Omega4.0} follows immediately.
  
  \noindent
  \underline{Case $N_{min} \sim N_{max}$}. We separate 2 subcases.
  
  \noindent
  \textit{$\bullet$ 3 of $\xi_1,\ldots ,\xi_4$ are of the same sign}. From \eqref{sym-Omega}-\eqref{prop-Omega}, we may assume $\xi_1,\xi_2,\xi_3>0$ and $\xi_4<0$. Then rewrite $\Omega_4$ as
  \begin{equation}\label{est-Omega4.3}
  \Omega_4(\xi_1,\xi_2,\xi_3,\xi_4) = \Omega_3(\xi_1,\xi_2) + \Omega_3(\xi_3,\xi_1+\xi_2).
  \end{equation}
  At this point, we remark that both terms in the above sum have the same sign. Since they both are of size $N_4^{\alpha+1}$, we conclude 
   $$
  |\Omega_4(\xi_1,\xi_2,\xi_3,\xi_4)| \sim N_{max}^{\alpha+1},
  $$
  
  To show that both terms in the RHS of \eqref{est-Omega4.3} have the same sign, it suffices to prove that
 \begin{equation} \label{sign:Omega}
  \forall \xi_1,\xi_2>\xi_0, \quad  \sign(\Omega_3(\xi_1,\xi_2)) = -\sign(\omega_{\alpha+1}''(\xi_0)).
 \end{equation}
  Indeed, we rewrite $-\Omega_3(\xi_1,\xi_2)$ as 
  \begin{equation} \label{sign.Omega.1}
  \int_{\xi_0}^{\xi_2}\int_0^{\xi_1}\omega_{\alpha+1}''( \theta+\mu) d\mu d\theta +\left(\omega_{\alpha+1}(\xi_1+\xi_0)-\omega_{\alpha+1}(\xi_1)-\omega_{\alpha+1}(\xi_0)\right).
  \end{equation}
  On one hand, by the mean value theorem and Hypothesis \ref{hyp1},
  \[ \left|\omega_{\alpha+1}(\xi_1+\xi_0)-\omega_{\alpha+1}(\xi_1)-\omega_{\alpha+1}(\xi_0) \right| \le N_{max}^{\alpha} |\xi_0| \ll N_{max}^{\alpha+1} .\]
 On the other hand, by Hypothesis \ref{hyp1}, $\omega_{\alpha+1}'' $ is continuous outside of $0$ and $|\omega_{\alpha+1}''(\xi) | \sim |\xi|^{\alpha-1}$, so that $\omega_{\alpha+1}'' (\xi)$ does not change sign for $\xi>\xi_0$. Without loss of generality, we assume for instance $\omega_{\alpha+1}'' (\xi)>0$. Thus, 
 \[ \int_{\xi_0}^{\xi_2}\int_0^{\xi_1}\omega_{\alpha+1}''( \theta+\mu) d\mu d\theta \sim \int_{\xi_0}^{\xi_2}\int_0^{\xi_1}( \theta+\mu)^{\alpha-1} d\mu d\theta \sim N_{max}^{\alpha+1} ,\]
 which, combined with \eqref{sign.Omega.1}, proves \eqref{sign:Omega}.
  
  \noindent
  \textit{$\bullet$ 2 of $\xi_1,\ldots ,\xi_4$ are of the same sign, and the 2 others are of the opposite sign}. From \eqref{sym-Omega}-\eqref{prop-Omega}, we may assume $\xi_1,\xi_2>0$ and $\xi_3,\xi_4<0$. This implies that $\{M_{min},M_{med}\} = \{M_1,M_2\}$. To prove \eqref{est-Omega4.0}, we notice that
  $$
  \Omega_4(\xi_1,\xi_2,\xi_3,\xi_4) = -\int_0^{\xi_2+\xi_3}\int_{-\xi_3}^{\xi_1} \omega_{\alpha+1}''(\theta+\mu)d\mu d\theta
  $$
  and it suffices to show that $\theta+\mu \sim N_{max}$.\\
  If $0 < \xi_2+\xi_3$ and $-\xi_3<\xi_1$, then $0<-\xi_3<\theta+\mu <-\xi_4$ , thus $\theta+\mu \sim N_{max}$.\\
  If $0 < \xi_2+\xi_3$ and $-\xi_3>\xi_1$, then $0<\xi_1 < \theta+\mu < \xi_2$, thus $\theta+\mu \sim N_{max}$.\\
  The other cases follow by symmetry since $\int_a^b\int_c^d f = \int_b^a\int_d^c f$.  
\end{proof}

\subsection{Estimates for $a_k$}
\subsubsection{The case $k=3$}
For any $s>0$, the first symbol $a_3$ is defined on $\Gamma^3$ by $a_3(\xi_1,\xi_2,\xi_3) = \sum_{i=1}^3 \nu_s(\xi_i)$ where
$$
\nu_s(\xi) = \xi|\xi|^{2s}{ \bf 1}_{|\xi|\ge N_0}.
$$
We easily check arguing as in the proof of Proposition \ref{est-Omega3} that 
$$
|a_3(\xi_1,\xi_2,\xi_3)| \sim N_{min} N_{max}^{2s}.
$$
Next, we define $b_3 = a_3/\Omega_3$ on $\Gamma^3$. It follows from the above estimate that
\begin{equation}\label{est-b3}
|b_3(\xi_1,\xi_2,\xi_3)| \sim  N_{max}^{2s-\alpha}.
\end{equation}
Note also that $b_3$ is symmetric and satisfies
$$
b_3(-\xi_1,-\xi_2,-\xi_3) = b_3(\xi_1,\xi_2,\xi_3).
$$
Moreover, we record the following result for $b_3$. As previously explained, we often denote $b_3(\xi_1,\xi_2)=b_3(\xi_1,\xi_2,-(\xi_1+\xi_2))$ to simplify the notation. 
\begin{lemma}
  Assume that $|\xi_1|\ll |\xi_2|$ and $\beta=(\beta_1,\beta_2)\in\N^2$ be such that $|\beta|\le 2$. Then it holds that
  \begin{equation}\label{est-b3.1}
 \ |\partial^\beta b_3(\xi_1,\xi_2)|\lesssim |\xi_1|^{-\beta_1} |\xi_2|^{-\beta_2} |b_3(\xi_1,\xi_2)|.
\end{equation}
\end{lemma}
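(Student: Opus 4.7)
The strategy is to derive pointwise bounds on the partial derivatives of $a_3$ and $\Omega_3$ separately and then combine them through the quotient formula $b_3=a_3/\Omega_3$. The fundamental observation is that $|\xi_1|\ll|\xi_2|$ forces $|\xi_1+\xi_2|\sim|\xi_2|$, so that both $a_3$ and $\Omega_3$ exhibit a null structure in the $\xi_1$ direction: by Taylor expansion around $\xi_1=0$, the contributions from $\omega_{\alpha+1}(\xi_2)$ and $\omega_{\alpha+1}(\xi_1+\xi_2)$ (respectively $\nu_s(\xi_2)$ and $\nu_s(\xi_1+\xi_2)$) cancel at leading order, producing a factor of $\xi_1$. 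Proposition~\ref{est-Omega3} then yields $|\Omega_3|\sim|\xi_1||\xi_2|^{\alpha}$, and the analogous calculation for $a_3$ gives $|a_3|\sim|\xi_1||\xi_2|^{2s}$, consistent with $|b_3|\sim|\xi_2|^{2s-\alpha}$ recorded in \eqref{est-b3}.

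First, I would establish that for $|\beta|\le 2$,
\[|\partial_{\xi_1}^{\beta_1}\partial_{\xi_2}^{\beta_2}a_3|\lesssim|\xi_1|^{(1-\beta_1)_+}|\xi_2|^{2s-\beta_2}\quad\text{and}\quad|\partial_{\xi_1}^{\beta_1}\partial_{\xi_2}^{\beta_2}\Omega_3|\lesssim|\xi_1|^{(1-\beta_1)_+}|\xi_2|^{\alpha-\beta_2}.\]
Writing out the derivatives directly, e.g. $\partial_{\xi_2}\Omega_3=\omega_{\alpha+1}'(\xi_2)-\omega_{\alpha+1}'(\xi_1+\xi_2)$ and $\partial_{\xi_1}\Omega_3=\omega_{\alpha+1}'(\xi_1)-\omega_{\alpha+1}'(\xi_1+\xi_2)$, the required bounds reduce to applications of the mean value theorem combined with Hypothesis~\ref{hyp1}, which provides $|\omega_{\alpha+1}^{(k)}(\xi)|\sim|\xi|^{\alpha+1-k}$ for $|\xi|\ge\xi_0$. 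Differences of the form $\omega_{\alpha+1}^{(k)}(\xi_2)-\omega_{\alpha+1}^{(k)}(\xi_1+\xi_2)$ then gain a factor $|\xi_1|$ times a derivative evaluated at a point of size $|\xi_2|$. The bounds on $a_3$ follow in the same way, since $\nu_s$ is smooth on $\{|\xi|>N_0\}$ with $|\nu_s^{(k)}(\xi)|\sim|\xi|^{2s+1-k}$ there, and vanishes identically otherwise.

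Second, I would combine these estimates via the Leibniz rule applied to $b_3=a_3\cdot\Omega_3^{-1}$, together with Fa\`a di Bruno's formula applied to $\Omega_3^{-1}$. Using the lower bound $|\Omega_3|\gtrsim|\xi_1||\xi_2|^{\alpha}$ together with the derivative bounds on $\Omega_3$ from the previous step, one gets $|\partial^\beta(\Omega_3^{-1})|\lesssim|\xi_1|^{-1-\beta_1}|\xi_2|^{-\alpha-\beta_2}$, so that every term in the Leibniz expansion of $\partial^\beta b_3$ is bounded by
\[|\xi_1|^{-\beta_1}|\xi_2|^{2s-\alpha-\beta_2}\sim|\xi_1|^{-\beta_1}|\xi_2|^{-\beta_2}|b_3|,\]
which yields the claim.

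The hard part will be the limited regularity of $\omega_{\alpha+1}$ at the origin: Hypothesis~\ref{hyp1} only guarantees $\omega_{\alpha+1}\in C^1(\R)$, so $\omega_{\alpha+1}''(\xi_1)$ is not controlled when $\xi_1$ is near $0$, a situation that is a priori compatible with $|\xi_1|\ll|\xi_2|$. I would handle this by splitting the region into $|\xi_1|\ge\xi_0$, where Hypothesis~\ref{hyp1} supplies all needed derivative bounds at $\xi_1$, and $|\xi_1|<\xi_0$, where $\nu_s(\xi_1)\equiv 0$ so no derivative of $\nu_s$ at $\xi_1$ ever appears. In this subregion, the contribution of $\omega_{\alpha+1}(\xi_1)$ and its problematic second derivative is replaced by the crude $C^1$ estimate $|\omega_{\alpha+1}(\xi_1)|\lesssim|\xi_1|$ of Remark~\ref{Rem:symb}, which suffices because this term is only a lower-order perturbation of the dominant contribution $\xi_1\omega_{\alpha+1}'(\xi_2)$ obtained from Taylor expanding $\omega_{\alpha+1}(\xi_2)-\omega_{\alpha+1}(\xi_1+\xi_2)$ around $\xi_2$, and derivatives in $\xi_1$ of this dominant part are controlled using Hypothesis~\ref{hyp1} at the point $\xi_2$ rather than at $\xi_1$.
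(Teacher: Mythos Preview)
Your approach—bound derivatives of $a_3$ and $\Omega_3$ separately via the mean value theorem and Hypothesis~\ref{hyp1}, then combine through Leibniz and Fa\`a di Bruno for the quotient—is the natural one and is presumably what underlies Lemma~2.6 of \cite{MPV}, which the paper simply cites without further detail. It works cleanly for all $\beta$ with $\beta_1\le 1$, i.e.\ for $\beta\in\{(0,0),(1,0),(0,1),(1,1),(0,2)\}$, and these are precisely the cases actually invoked later in the proof of Proposition~\ref{est-a4.0}.

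There is, however, a genuine gap in the case $\beta=(2,0)$ when $|\xi_1|<\xi_0$. Your claimed intermediate bound $|\partial_{\xi_1}^2\Omega_3|\lesssim|\xi_2|^{\alpha}$ requires control on $\omega_{\alpha+1}''(\xi_1)$, since $\partial_{\xi_1}^2\Omega_3=\omega_{\alpha+1}''(\xi_1)-\omega_{\alpha+1}''(\xi_1+\xi_2)$; but Hypothesis~\ref{hyp1} imposes no bound whatsoever on $\omega_{\alpha+1}''$ in the region $\{0<|\xi|<\xi_0\}$, and one can easily construct admissible odd $C^1$ symbols with $|\xi_1\,\omega_{\alpha+1}''(\xi_1)|$ unbounded as $\xi_1\to 0$. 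Your proposed fix—treating $\omega_{\alpha+1}(\xi_1)$ as a lower-order $C^1$ perturbation of the dominant piece $\xi_1\omega_{\alpha+1}'(\xi_2)$—does not survive two $\xi_1$-derivatives: differentiating the perturbation produces exactly the uncontrolled term $\omega_{\alpha+1}''(\xi_1)$, it enters $\partial_{\xi_1}^2(\Omega_3^{-1})$ through the term $-\Omega_3^{-2}\partial_{\xi_1}^2\Omega_3$, and there is no structural cancellation in the quotient $b_3=a_3/\Omega_3$ that removes it. So for $\beta=(2,0)$ with $|\xi_1|<\xi_0$, the bound \eqref{est-b3.1} genuinely requires either an additional low-frequency assumption on $\omega_{\alpha+1}''$ (which the homogeneous symbol $|\xi|^\alpha\xi$ happens to satisfy) or a restriction to $|\xi_1|\gtrsim\xi_0$; the paper's applications never touch this case.
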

\begin{proof}
  This is a direct consequence of Lemma 2.6 in \cite{MPV}. It is worth noticing that with Hypothesis \ref{hyp1} on the symbol regularity, estimate \eqref{est-b3.1} only holds for $|\beta|\le 2$.
\end{proof}

\subsubsection{The case $k=4$}

For $(\xi_1,\xi_2,\xi_3,\xi_4) \in \Gamma^4$, let us set
$$
a_4(\xi_1,\xi_2,\xi_3,\xi_4) = \sum_{1 \le i<j \le 4}(\xi_i+\xi_j)b_3(\xi_i,\xi_j).
$$
\begin{proposition}\label{est-a4.0}
 With the notation of Proposition \ref{est-Omega4}, we have for all $s,\alpha>0$,
 \begin{equation}\label{est-a4.1}
  |a_4(\xi_1,\xi_2,\xi_3,\xi_4)| \lesssim M_{min} M_{med}(N_{thd}^{2s-\alpha}\vee N_{max}^{2s-\alpha}) N_{max}^{-1}.
 \end{equation}
\end{proposition}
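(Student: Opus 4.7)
My plan is to exploit the constraint $\xi_1+\xi_2+\xi_3+\xi_4=0$ to group the six terms of $a_4$ into three difference pairs, and then extract the factor $M_{min} M_{med} N_{max}^{-1}$ by applying the mean value theorem together with the derivative bound \eqref{est-b3.1} on $b_3$.

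First, using $\xi_3+\xi_4 = -(\xi_1+\xi_2)$, $\xi_2+\xi_4 = -(\xi_1+\xi_3)$ and $\xi_2+\xi_3 = -(\xi_1+\xi_4)$, I would rewrite
\begin{align*}
a_4 &= (\xi_1+\xi_2)\bigl[b_3(\xi_1,\xi_2) - b_3(\xi_3,\xi_4)\bigr] \\
&\quad + (\xi_1+\xi_3)\bigl[b_3(\xi_1,\xi_3) - b_3(\xi_2,\xi_4)\bigr] + (\xi_1+\xi_4)\bigl[b_3(\xi_1,\xi_4) - b_3(\xi_2,\xi_3)\bigr],
\end{align*}
so that each bracket is a difference on $\Gamma^3$ admitting an integral representation via the FTC. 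Since $a_4$ inherits full symmetry from $b_3$, I may assume $|\xi_1|\le|\xi_2|\le|\xi_3|\le|\xi_4|$, i.e.\ $N_i=N_{min},\ldots,N_{max}$ in increasing order, and split the analysis according to the three standard frequency regimes used in Proposition \ref{est-Omega4}: \textbf{(A)} $N_1\sim N_4\sim N$, \textbf{(B)} $N_1\ll N_2\sim N_4 \sim N$, and \textbf{(C)} $N_1, N_2 \ll N_3\sim N_4\sim N$.

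In each case I would parametrise a segment from, say, $(\xi_1,\xi_2)$ to $(\xi_3,\xi_4)$ inside the region where $b_3$ is smooth, and use
\[
b_3(\xi_1,\xi_2) - b_3(\xi_3,\xi_4) = \int_0^1 \nabla b_3\bigl((1-t)(\xi_1,\xi_2)+t(\xi_3,\xi_4)\bigr)\cdot(\xi_3-\xi_1,\xi_4-\xi_2)\, dt,
\]
together with \eqref{est-b3.1}. In the fully comparable regime (A), a direct application produces a first-order gain $M_{max}/N_{max}$, and for the two pairs whose prefactor is not $M_{min}$ I would iterate the expansion once more (a second-order Taylor expansion around a symmetric base point, using the second-derivative bound contained in \eqref{est-b3.1}) to extract the extra factor $M_{med}/N_{max}$; this yields the desired $M_{min}M_{med} N_{max}^{2s-\alpha-1}$. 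In the mildly resonant regime (B), we have $M_{min}\sim M_{med}\sim M_{max}\sim N$, so the crude triangle-inequality bound $|a_4|\lesssim N \cdot N^{2s-\alpha}$ is already sharp. The resonant regime (C) is the delicate one: $M_{min}$ can be arbitrarily small, but $M_{med}\sim N_{max}$, and only the pair with prefactor $\xi_1+\xi_2$ has a small multiplier; the other two pairs must gain a factor $M_{min}/N_{max}$ from the difference itself, which I would obtain by noting that when $N_1,N_2\ll N_3\sim N_4$ the two points $(\xi_1,\xi_3)$ and $(\xi_2,\xi_4)$ lie close after a symmetric translation of size $O(M_{min})$, so $\nabla b_3$ is evaluated at a point of size $\sim N_{max}$ and the displacement is bounded by $M_{min}$ plus $O(N_2)$.

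The main obstacle will be regime (C), where a naive mean value estimate only yields one power of $M_{min}$ and a crude bound $N_{max}^{2s-\alpha}$ on the derivative, losing the desired $N_{max}^{-1}$ gain. Overcoming this will require rewriting the relevant difference as a \emph{second-order} finite difference around a symmetric midpoint (exploiting the parity $b_3(-\xi_1,-\xi_2)=b_3(\xi_1,\xi_2)$) so that the leading linear term cancels and only the Hessian of $b_3$ enters, producing the second $N_{max}^{-1}$ via \eqref{est-b3.1}. Once these three regimes are handled, taking the maximum of $N_{thd}^{2s-\alpha}$ and $N_{max}^{2s-\alpha}$ to absorb both signs of $2s-\alpha$ delivers \eqref{est-a4.1}.
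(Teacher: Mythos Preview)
Your three-pair decomposition and case split match the paper's, and your handling of regime (B) is fine. But there is a real gap in regime (A), and your treatment of (C) is not quite right either.

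In regime (A) with $N_1\sim N_2\sim N_3\sim N_4\sim N$, you propose to apply \eqref{est-b3.1} along a segment joining, say, $(\xi_1,\xi_3)$ to $(\xi_2,\xi_4)$. But \eqref{est-b3.1} is only stated (and only proved) under the hypothesis $|\xi_1|\ll|\xi_2|$; along your path both arguments are $\sim N$, so the lemma does not apply. In fact each of the two terms in $\partial_1 b_3=\partial_1 a_3/\Omega_3 - a_3\,\partial_1\Omega_3/\Omega_3^2$ is individually of size $|\eta_1+\eta_2|^{-1}N^{2s-\alpha}$ in this regime, and the cancellation that would bring this down to $N^{2s-\alpha-1}$ is exactly what you would have to prove. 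The paper bypasses this entirely: in the $2+2$-sign subcase of (A) it writes $b_3(\xi_2,\xi_3)-b_3(-\xi_1,-\xi_4)$ as a quotient difference and uses the algebraic identity
\[
\Omega_3(\xi_2,\xi_3)-\Omega_3(-\xi_1,-\xi_4)=\Omega_4(\xi_1,\xi_2,\xi_3,\xi_4),
\]
which together with Proposition~\ref{est-Omega4} produces the factor $M_{min}M_{med}N^{\alpha-1}$ directly. This identity is the key input you are missing; your ``second-order Taylor around a symmetric midpoint'' does not give linear-term cancellation for a \emph{difference} $f(p+h)-f(p-h)$, only for a second difference $f(p+h)+f(p-h)-2f(p)$.

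In regime (C) your pairing puts a prefactor $|\xi_1+\xi_3|\sim N$ in front of $b_3(\xi_1,\xi_3)-b_3(\xi_2,\xi_4)$, and the best MVT bound on the bracket (via parity, displacement of size $M_3$, using \eqref{est-b3.1} with small first argument $\sim N_1$) is $M_3 N_1^{-1}N^{2s-\alpha}$, which after multiplying by $N$ is too large by $N/N_1$. The paper fixes this by a finer regrouping: it peels off the prefactor as $\xi_1+\xi_3=\xi_1+\xi_3$ but redistributes terms so that the pieces carrying the bad derivative bound get prefactors of size $N_1$ or $N_2$ (the $a_{42}$ block), while the remaining piece $a_{43}$ with prefactor $\xi_3\sim N$ is a genuine second-order finite difference in the \emph{large} variable and is controlled by $\partial^{(1,1)}b_3$ and $\partial^{(0,2)}b_3$ via \eqref{est-b3.1}. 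Your outline does not reach this regrouping.
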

\begin{proof}
  \noindent
  \underline{Case $N_{thd} \ll N_{max}$}.
  By \eqref{sym-Omega}, we may assume
  $$
  N_1\lesssim N_2 \ll N_3\sim N_4,
  $$
  which implies $\xi_3\xi_4 < 0$ and $M_{min}=M_3$ and $M_1\sim M_2\sim N_{max}$. Using the properties of $b_3$, we rewrite $a_4$ as follows:
  \begin{align*}
   a_4(\xi_1,\xi_2,\xi_3,\xi_4) &= (\xi_1+\xi_2)[b_3(\xi_1,\xi_2) - b_3(\xi_3,\xi_4)]\\
   &\quad + (\xi_2+\xi_3)[b_3(\xi_2,\xi_3) - b_3(\xi_1,\xi_4)] \\
   &\quad + (\xi_1+\xi_3)[b_3(\xi_1,\xi_3) - b_3(\xi_2,\xi_4)] \\
   &= (\xi_1+\xi_2)[b_3(\xi_1,\xi_2) - b_3(\xi_3,\xi_4)] \\
   &\quad + \xi_2[b_3(\xi_2,\xi_3)-b_3(-\xi_1,-\xi_4)] + \xi_1[b_3(\xi_1,\xi_3)-b_3(-\xi_2,-\xi_4)]\\
   &\quad + \xi_3[b_3(\xi_2,\xi_3)-b_3(\xi_2,\xi_1+\xi_3)] - \xi_3[b_3(\xi_1,\xi_4)-b_3(\xi_1,\xi_2+\xi_4)]\\
   & =: a_{41}(\xi_1,\xi_2,\xi_3,\xi_4) +a_{42}(\xi_1,\xi_2,\xi_3,\xi_4) +a_{43}(\xi_1,\xi_2,\xi_3,\xi_4).
  \end{align*}
    The first term $a_{41}$ is easily bounded thanks to \eqref{est-b3}:
    $$
    |a_{41}(\xi_1,\xi_2,\xi_3,\xi_4)| \lesssim M_3 (|b_3(\xi_1,\xi_2)| + |b_3(\xi_3,\xi_4)|)\lesssim M_3(N_2^{2s-\alpha} + N_4^{2s-\alpha}).
    $$
    Now we turn to $a_{42}$. First if $N_2\sim M_3$, then the needed bound is obtained as for $a_{41}$, thus it suffices to deal with the case $M_3\ll N_2$, which enforce $N_1\sim N_2$ and $\xi_1\xi_2<0$. Moreover, from the symmetry in $(\xi_1, \xi_2)$, we only need to consider the first term in $a_{42}$. By making use of the mean value theorem, as well as estimate \eqref{est-b3.1} we obtain
    \begin{align*}
      |b_3(\xi_2,\xi_3)& - b_3(-\xi_1,-\xi_4)| \\ &= |(b_3(\xi_2,\xi_3) - b_3(-\xi_1,\xi_3)) + (b_3(-\xi_1,\xi_3) - b_3(-\xi_1,-\xi_4)|\\
      &\lesssim |\xi_1+\xi_2| \sup_{|\xi^\ast|\sim N_2}|\partial^{(1,0)}b_3(\xi^\ast, \xi_3)| + |\xi_3+\xi_4| \sup_{|\xi^\ast|\sim N_4}|\partial^{(0,1)}b_3(-\xi_1,\xi^\ast)|\\
      &\lesssim M_3(N_2^{-1}N_4^{2s-\alpha} + N_4^{-1}N_4^{2s-\alpha})\\
      &\lesssim M_3N_2^{-1}N_4^{2s-\alpha}.
    \end{align*}
    This provides the desired bound for $a_{42}$.
    Now for the contribution of $a_{43}$, the Taylor formula gives
    $$
    b_3(\xi_2,\xi_3)-b_3(\xi_2,\xi_1+\xi_3) = -\xi_1 \int_0^1 \partial^{(0,1)}b_3(\xi_2, \theta\xi_3+(1-\theta)(\xi_1+\xi_3))d\theta
    $$
    and
    $$
    b_3(\xi_1,\xi_4)-b_3(\xi_1,\xi_2+\xi_4) = -\xi_2\int_0^1\partial^{(0,1)}b_3(\xi_1, \theta\xi_4+(1-\theta)(\xi_2+\xi_4))d\theta
    $$
    with $|\theta\xi_3+(1-\theta)(\xi_1+\xi_3)|\sim |\theta\xi_4+(1-\theta)(\xi_2+\xi_4)|\sim N_4$ for $\theta\in(0,1)$. The desired result follows easily from this and \eqref{est-b3.1} in the case $N_2\sim M_3$. Thus we assume now $M_3\ll N_2$ so that $N_1\sim N_2$ and $\xi_1\xi_2<0$. Splitting $\xi_1=(\xi_1+\xi_2)-\xi_2$, we decompose 
   \begin{align*}
     a_{43}(\xi_1,\xi_2,\xi_3,\xi_4)=a_{431}(\xi_1,\xi_2,\xi_3,\xi_4) + a_{432}(\xi_1,\xi_2,\xi_3,\xi_4),
    \end{align*}  
 where   
      \begin{align*}
    a_{431}(\xi_1,\xi_2,\xi_3,\xi_4) = -\xi_3(\xi_1+\xi_2) \int_0^1 \partial^{(0,1)}b_3(\xi_2, \theta\xi_3+(1-\theta)(\xi_1+\xi_3))d\theta
    \end{align*}
    and 
     \begin{align*}
    a_{432}(\xi_1,\xi_2,\xi_3,\xi_4)& =\xi_2\xi_3 \int_0^1\partial^{(0,1)}b_3(\xi_1, \theta\xi_4+(1-\theta)(\xi_2+\xi_4))d\theta  \\ & \quad -\xi_2\xi_3  \int_0^1 \partial^{(0,1)}b_3(-\xi_2, -\theta\xi_3-(1-\theta)(\xi_1+\xi_3))d\theta  .
    \end{align*} 
    
    The symbol $a_{431}$ may be bounded as previously. To deal with $a_{432}$, we first observe that setting $\Psi= \partial^{(0,1)}b_3$, we have
    \begin{align*}
      \Psi(\xi_1, &\xi_4+(1-\theta)\xi_2) - \Psi(-\xi_2, -\xi_3-(1-\theta)\xi_1) \\
      &=
       [ \Psi(\xi_1, \xi_4+(1-\theta)\xi_2) - \Psi(-\xi_2, \xi_4+(1-\theta)\xi_2) ]\\
      &\quad + [ \Psi(-\xi_2, \xi_4+(1-\theta)\xi_2) - \Psi(-\xi_2, -\xi_3-(1-\theta)\xi_1) ]\\
      &= (\xi_1+\xi_2)\int_0^1 \partial^{(1,0)}\Psi(\mu\xi_1-(1-\mu)\xi_2, \xi_4+(1-\theta)\xi_2)d\mu \\
      &\quad - (\xi_1+\xi_2)\int_0^1 \theta \partial^{(0,1)}\Psi(-\xi_2, \mu(\xi_4+(1-\theta)\xi_2) - (1-\mu)(\xi_3+(1-\theta)\xi_1)) d\mu .
    \end{align*} 
    Hence, $a_{432}$ may be rewritten as    
    \begin{align*}
     & \xi_2\xi_3(\xi_1+\xi_2) \int_0^1\int_0^1 \partial^{(1,1)}b_3(\mu\xi_1-(1-\mu)\xi_2, \xi_4+(1-\theta)\xi_2) d\mu d\theta \\
     &\ -\xi_2\xi_3(\xi_1+\xi_2)\int_0^1\int_0^1 \theta\partial^{(0,2)}b_3(-\xi_2, \mu(\xi_4+(1-\theta)\xi_2) - (1-\mu)(\xi_3+(1-\theta)\xi_1)) d\mu d\theta. 
    \end{align*}
    Since $\xi_1\xi_2<0$ and $\xi_3\xi_4<0$, it holds that 
    $$
    |\mu\xi_1-(1-\mu)\xi_2| \sim N_2
    $$
    and 
    $$
    |\xi_4+(1-\theta)\xi_2| \sim |\mu(\xi_4+(1-\theta)\xi_2) - (1-\mu)(\xi_3+(1-\theta)\xi_1)| \sim N_4
    $$
    for all $\theta,\mu\in (0,1)$ and the claim follows thanks to \eqref{est-b3.1}.    
    \noindent
    
  \underline{Case $N_{min}\ll N_{thd} \sim N_{max}$}. We have $M_{min}\sim M_{max}\sim N_{max}$. It is clear then that for any $1 \le i<j \le 4$, $|(\xi_i+\xi_j)b_3(\xi_i,\xi_j)|\lesssim N_{max}^{1+2s-\alpha}$, which by recalling the definition of $a_4$ yields estimate \eqref{est-a4.1} in this case. 
  
  \noindent
  \underline{Case $N_{min} \sim N_{max}:= N$}.
  
 \noindent
  \textit{$\bullet$ 3 of $\xi_1,\ldots \xi_4$ are of the same sign}. Again, we must have $M_{min}\sim M_{max}\sim N_{max}$ and the bound \eqref{est-a4.1} follows arguing as in the previous case.
  
  \noindent
  \textit{$\bullet$ 2 of $\xi_1,\ldots \xi_4$ are of the same sign, and the 2 others are of the opposite sign}. We may assume $\xi_1,\xi_2>0$ and $\xi_3,\xi_4<0$. This implies that $\{M_{min},M_{med}\} = \{M_1,M_2\}$. We will use the following decomposition of $a_4$:
  \begin{align}
   a_4(\xi_1,\xi_2,\xi_3,\xi_4) &= (\xi_1+\xi_2)[b_3(\xi_1,\xi_2) - b_3(-\xi_3,-\xi_4)]\nonumber \\
   &\quad + (\xi_2+\xi_3)[b_3(\xi_2,\xi_3) - b_3(-\xi_1,-\xi_4)] \nonumber\\
   &\quad + (\xi_1+\xi_3)[b_3(\xi_1,\xi_3) - b_3(-\xi_2,-\xi_4)] \nonumber\\   
   & =: a_{44}(\xi_1,\xi_2,\xi_3,\xi_4) +a_{45}(\xi_1,\xi_2,\xi_3,\xi_4) +a_{46}(\xi_1,\xi_2,\xi_3,\xi_4).\label{a4}
  \end{align}
  From the symmetry in $\xi_1,\xi_2$, it suffices to estimate $a_{44}$ and $a_{45}$. Using the definition of $b_3$ we may decompose
  $$
  b_3(\xi_2, \xi_3) - b_3(-\xi_1, -\xi_4) = a_3(\xi_2,\xi_3)\left(\frac{1}{\Omega_3(\xi_2,\xi_3)} - \frac{1}{\Omega_3(-\xi_1,-\xi_4)}\right) + \frac{a_3(\xi_2,\xi_3)-a_3(-\xi_1,\xi_4)}{\Omega_3(-\xi_1,-\xi_4)}
  $$
  Then, notice that $\Omega_3(\xi_2,\xi_3)-\Omega_3(-\xi_1,-\xi_4) = \Omega_4(\xi_1,\xi_2,\xi_3)$, therefore we may take advantage of Lemma \ref{est-Omega4} to infer
  $$
  \left| a_3(\xi_2,\xi_3)\left(\frac{1}{\Omega_3(\xi_2,\xi_3)} - \frac{1}{\Omega_3(-\xi_1,-\xi_4)}\right) \right| \lesssim M_1N^{2s}\frac{M_1M_2N^{\alpha-1}}{(M_1N^\alpha)^2} \lesssim M_2N^{2s-1-\alpha}.
  $$ 
  Similarly, since the bounds for $a_3$ are the exact same than those for $\Omega_3$ with $\alpha$ replaced with $2s$, we get
  $$
  \left| \frac{a_3(\xi_2,\xi_3)-a_3(-\xi_1,\xi_4)}{\Omega_3(-\xi_1,-\xi_4)} \right| \lesssim \frac{M_1M_2N^{2s-1}}{M_1N^\alpha} \lesssim M_2N^{2s-1-\alpha}.
  $$
  Gathering the above estimates we deduce $|a_{45}|\lesssim M_1M_2N^{2s-1-\alpha}$.
 Finally, the bound for $a_{44}$ is obtained in the same way:
 \begin{align*}
   |a_{44}| &\lesssim N \left( \left|a_3(\xi_1,\xi_2)\frac{\Omega_4(\xi_1,\xi_2,\xi_3)}{\Omega_3(\xi_1,\xi_2)\Omega_3(\xi_3,\xi_4)}\right| + \left|\frac{a_3(\xi_1,\xi_2)+a_3(\xi_3,\xi_4)}{\Omega_3(\xi_3,\xi_4)}\right|  \right)\\
   &\lesssim N\left( N^{2s+1}\frac{M_1M_2N^{\alpha-1}}{N^{2(1+\alpha)}} + \frac{M_1M_2N^{2s-1}}{N^{1+\alpha}} \right)\\
   &\lesssim M_1M_2N^{2s-\alpha-1}.
 \end{align*} 
\end{proof}

\subsection{Estimates for $\widetilde{a}_k$}
\subsubsection{Estimates for $\widetilde{b}_3$}
Recall that for  $\sigma>-\frac 12$ and $N_0\gg 1$, $\widetilde{a}_3$ is defined by
$$
\widetilde{a_3}(\xi_1, \xi_2) = (\widetilde{\nu}_\sigma(\xi_1) + \widetilde{\nu}_\sigma(\xi_2)){\bf 1}_{|\xi_1+\xi_2|>N_0},
$$
where $\widetilde{\nu}_\sigma(\xi)=\xi|\xi|^{2\sigma}{\bf 1}_{|\xi|>1}$ and that $\widetilde{b}_3 = \widetilde{a}_3 / \Omega_3$.  It is worth noticing that $\widetilde{a}_3$ vanishes unless $\max(|\xi_1|, |\xi_2|)\gg 1$.

\begin{lemma}\label{lem:b3tilde}
  Let $(\xi_1, \xi_2, \xi_3)\in \Gamma^3$ and $N_1,N_2,N_3$ be dyadic numbers such that $N_i\sim |\xi_i|$ for $i=1,2,3$. 
  \begin{enumerate}
    \item If $N_1\sim N_2$, one has
    \begin{equation}\label{lem:b3tilde.1}
    |\widetilde{b_3}(\xi_1, \xi_2)| \lesssim N_{max}^{2\sigma-\alpha}.
    \end{equation}
    \item If $N_1\ll N_2$ or $N_2\ll N_1$, one has
    \begin{equation}\label{lem:b3tilde.2}
    |\widetilde{b_3}(\xi_1, \xi_2)| \lesssim N_{min}^{-1} N_{max}^{2\sigma+1-\alpha}.
    \end{equation}
  \end{enumerate}
\end{lemma}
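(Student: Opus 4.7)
The plan is to combine the sharp lower bound $|\Omega_3(\xi_1,\xi_2,\xi_3)| \sim N_{\min} N_{\max}^\alpha$ from Proposition~\ref{est-Omega3} with direct pointwise bounds on $\widetilde{a}_3$ that are obtained in each of the two frequency regimes. Throughout, we may restrict to the support of $\widetilde{a}_3$, where $|\xi_1+\xi_2|=N_3>N_0\gg 1$ and at least one of $|\xi_1|,|\xi_2|$ is $\ge 1$.

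\textbf{Case 1 ($N_1\sim N_2$).} Here the condition $N_3>N_0\gg 1$ forces $N_1\sim N_2\gtrsim N_0\gg 1$, so $\widetilde{\nu}_\sigma(\xi_i)=\xi_i|\xi_i|^{2\sigma}$ for $i=1,2$. The key step is that we must \emph{exploit a cancellation} when $\xi_1,\xi_2$ have opposite signs. Split into two sub-cases according to the sign of $\xi_1\xi_2$:
\begin{itemize}
\item If $\xi_1\xi_2>0$, then $N_3=|\xi_1|+|\xi_2|\sim N_1$, and the crude bound $|\widetilde{a}_3|\le |\xi_1|^{2\sigma+1}+|\xi_2|^{2\sigma+1}\lesssim N_1^{2\sigma+1}\sim N_3 N_1^{2\sigma}$ already suffices.
\item If $\xi_1\xi_2<0$ (say $\xi_1>0>\xi_2$), then $\widetilde{\nu}_\sigma(\xi_1)+\widetilde{\nu}_\sigma(\xi_2)=\xi_1^{2\sigma+1}-(-\xi_2)^{2\sigma+1}$, and the mean value theorem applied to $x\mapsto x^{2\sigma+1}$ on a segment joining $\xi_1$ and $-\xi_2$, which lies in $\{|x|\sim N_1\}$, yields $|\widetilde{a}_3|\lesssim |\xi_1+\xi_2|\,N_1^{2\sigma}\sim N_3 N_1^{2\sigma}$.
\end{itemize}
In both sub-cases $|\widetilde{a}_3|\lesssim N_3 N_1^{2\sigma}$, and dividing by $|\Omega_3|\sim N_3 N_1^\alpha$ produces \eqref{lem:b3tilde.1}.

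\textbf{Case 2 ($N_1\ll N_2$, the reverse being symmetric).} Then $N_3=|\xi_1+\xi_2|\sim N_2=N_{\max}$ and $N_{\min}=N_1$; in particular $N_2\gg 1$ so $|\widetilde{\nu}_\sigma(\xi_2)|\sim N_2^{2\sigma+1}$, while $|\widetilde{\nu}_\sigma(\xi_1)|\le N_1^{2\sigma+1}\ll N_2^{2\sigma+1}$. Hence $|\widetilde{a}_3|\lesssim N_2^{2\sigma+1}$, and Proposition~\ref{est-Omega3} gives $|\Omega_3|\sim N_1 N_2^\alpha$, so $|\widetilde{b}_3|\lesssim N_1^{-1}N_2^{2\sigma+1-\alpha}$, which is \eqref{lem:b3tilde.2}.

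The only step that requires any thought is the opposite-sign sub-case of Case 1, where one must observe that the naive estimate $|\widetilde{a}_3|\lesssim N_1^{2\sigma+1}$ is insufficient (it would produce a spurious factor $N_1/N_3$) and that the first-order cancellation between $\widetilde{\nu}_\sigma(\xi_1)$ and $\widetilde{\nu}_\sigma(\xi_2)$ must be extracted via the mean value theorem; the remaining arithmetic is routine.
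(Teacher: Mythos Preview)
Your proof is correct and follows essentially the same approach as the paper: split according to whether $N_1\sim N_2$ or not, in the former case separate same-sign and opposite-sign sub-cases and use the mean value theorem for the latter to extract the factor $N_3$ in $|\widetilde{a}_3|$, then divide by $|\Omega_3|\sim N_{\min}N_{\max}^\alpha$ from Proposition~\ref{est-Omega3}. Your write-up is simply more detailed than the paper's terse version.
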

\begin{proof}
We may always assume $N_{max}\gg 1$. In the case $N_3\sim N_{min}$ then this forces $N_1\sim N_2\sim N_{max}\gg 1$. Splitting the cases $\xi_1\xi_2>0$ and $\xi_1\xi_2<0$, and using the mean value theorem in the latter case, we find that $|\widetilde{a}_3(\xi_1, \xi_2)|\sim N_3 N^{2\sigma}$. Estimate \eqref{lem:b3tilde.1} follows from this and \eqref{est-Omega3.0}. In the same way, for $N_3\sim N_{max}$ we easily get $|\widetilde{a}_3(\xi_1, \xi_2)|\sim N_{max}^{2\sigma+1}$ and we may conclude \eqref{lem:b3tilde.2} with \eqref{est-Omega3.0}.
\end{proof}

The following lemma will also be useful.
\begin{lemma}\label{lem:diff-b3tilde}
  With the notations of Lemma \ref{lem:b3tilde}, if $N_1\ll N_2$, then
  \begin{equation}\label{lem:diff-b3tilde.0}
    |\partial^{(0,1)}\widetilde{b}_3(\xi_1, \xi_2)| \lesssim N_1^{-1}N_2^{2\sigma-\alpha}.
  \end{equation}
\end{lemma}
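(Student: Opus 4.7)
The strategy is a straightforward quotient-rule computation, where the key observation is that in the regime $N_1 \ll N_2$ both the numerator factor $|\Omega_3|$ and its $\xi_2$-derivative admit sharp pointwise bounds that exactly match.

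First, I would recall that on $\Gamma^3$ we use the convention $\widetilde{b}_3(\xi_1,\xi_2) = \widetilde{a}_3(\xi_1,\xi_2)/\Omega_3(\xi_1,\xi_2)$, where
\[ \Omega_3(\xi_1,\xi_2) = \omega_{\alpha+1}(\xi_1+\xi_2) - \omega_{\alpha+1}(\xi_1) - \omega_{\alpha+1}(\xi_2). \]
In particular $\partial^{(0,1)}\Omega_3(\xi_1,\xi_2) = \omega'_{\alpha+1}(\xi_1+\xi_2) - \omega'_{\alpha+1}(\xi_2)$. Since $N_1 \ll N_2$ forces $|\xi_1+\xi_2| \sim N_2$ as well as $|\xi_2| \sim N_2 \gg \xi_0$, applying the mean value theorem and Hypothesis \ref{hyp1} (with $\beta=2$) yields $|\partial^{(0,1)}\Omega_3(\xi_1,\xi_2)| \lesssim N_1 N_2^{\alpha-1}$. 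Combined with \eqref{est-Omega3.0}, which gives $|\Omega_3(\xi_1,\xi_2)| \sim N_1 N_2^\alpha$, the denominator contribution is under control.

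Second, for the numerator, I would note that in the region of interest $|\xi_1+\xi_2|\sim N_2 \gg N_0$ and $|\xi_2| \sim N_2 \gg 1$, so both indicator functions in the definition of $\widetilde{a}_3 = (\widetilde{\nu}_\sigma(\xi_1) + \widetilde{\nu}_\sigma(\xi_2))\mathbf{1}_{|\xi_1+\xi_2|>N_0}$ are locally constant and equal to $1$; their distributional derivatives play no role. Thus $\partial^{(0,1)}\widetilde{a}_3(\xi_1,\xi_2) = \widetilde{\nu}'_\sigma(\xi_2) = (2\sigma+1)|\xi_2|^{2\sigma}$, so $|\partial^{(0,1)}\widetilde{a}_3| \lesssim N_2^{2\sigma}$, while the proof of \eqref{lem:b3tilde.2} already gives $|\widetilde{a}_3(\xi_1,\xi_2)| \lesssim N_2^{2\sigma+1}$.

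Finally, I would assemble these bounds via the quotient rule
\[ \partial^{(0,1)}\widetilde{b}_3 = \frac{\partial^{(0,1)}\widetilde{a}_3}{\Omega_3} - \frac{\widetilde{a}_3 \, \partial^{(0,1)}\Omega_3}{\Omega_3^2}, \]
obtaining
\[ |\partial^{(0,1)}\widetilde{b}_3(\xi_1,\xi_2)| \lesssim \frac{N_2^{2\sigma}}{N_1 N_2^\alpha} + \frac{N_2^{2\sigma+1} \cdot N_1 N_2^{\alpha-1}}{(N_1 N_2^\alpha)^2} \lesssim N_1^{-1} N_2^{2\sigma-\alpha}, \]
which is precisely \eqref{lem:diff-b3tilde.0}. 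There is no serious obstacle here: the only subtlety is checking that the indicator-function contributions vanish, which is immediate from the regime $N_1 \ll N_2$ together with the assumptions $N_0 \gg 1$ and $|\xi_2| \sim N_2 \gg 1$.
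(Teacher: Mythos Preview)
Your proposal is correct and follows essentially the same approach as the paper: a direct quotient-rule computation using $|\Omega_3| \sim N_1 N_2^\alpha$, $|\partial^{(0,1)}\Omega_3| \lesssim N_1 N_2^{\alpha-1}$ via the mean value theorem, $|\widetilde{a}_3| \lesssim N_2^{2\sigma+1}$, and $|\partial^{(0,1)}\widetilde{a}_3| = |\widetilde{\nu}_\sigma'(\xi_2)| \sim N_2^{2\sigma}$. Your added remark that the indicator functions are locally constant in this regime (so their distributional derivatives do not contribute) is a helpful clarification that the paper leaves implicit.
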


\begin{proof}
  Noticing that $|\widetilde{\nu}_\sigma'(\xi_2)|\sim N_2^{2\sigma}$ and $|\partial^{(0,1)}\Omega_3(\xi_1,\xi_2)|\sim N_1N_2^{\alpha-1}$ thanks to the mean value theorem, we get using the classical derivative rules that
  \begin{align*}
    |\partial^{(0,1)}\widetilde{b}_3(\xi_1, \xi_2)| &= \left|\frac{\widetilde{\nu}_\sigma'(\xi_2)}{\Omega_3(\xi_1,\xi_2)} - \frac{\left(\partial^{(0,1)}\Omega_3(\xi_1, \xi_2)\right) \widetilde{a}_3(\xi_1,\xi_2)}{(\Omega_3(\xi_1,\xi_2))^2}\right| \\
    &\lesssim \frac{N_2^{2\sigma}}{N_1N_2^\alpha} + \frac{N_1N_2^{\alpha-1} N_2^{2\sigma+1}}{(N_1N_2^\alpha)^2}\\
    &\lesssim N_1^{-1}N_2^{2\sigma-\alpha}.
  \end{align*}
\end{proof}

\subsubsection{Estimates for $\widetilde{a}_4$}
In this section we prove the following lemma.
\begin{lemma}\label{lem:a4tilde}
  Let $(\xi_1,\xi_2,\xi_3,\xi_4) \in \Gamma^4$ and $N_1, N_2, N_3, N_4$ be dyadic numbers such that $N_i\sim |\xi_i|$ for $i=1,2,3,4$. From the symmetry for $\widetilde{a}_4$ we assume $|\xi_1|\le |\xi_2|$ and $|\xi_3|\le |\xi_4|$. Then, for any  $\alpha>0$ and $\sigma\in \left(-\frac 12, \min\left\{0, -\frac 12+\frac{\alpha}2\right\} \right)$, the following estimates for $\widetilde{a}_4$ hold true.
  \begin{enumerate}
    \item {\bf Region $A$}: If $N_{min} \sim N_{max}$,
    \begin{equation}\label{a4tilde-A}
      |\widetilde{a}_4(\xi_1,\xi_2,\xi_3)| \lesssim N_{max}^{2\sigma+1-\alpha} .
    \end{equation}
    \item {\bf Region $B$}: $N_{min}\ll N_{thd}\sim N_{max}$.
    \begin{enumerate}
      \item {\bf Sub-region $B_1$}: If $N_1\ll N_2\sim N_3\sim N_4$,
      \begin{equation}\label{a4tilde-B1}
      |\widetilde{a}_4(\xi_1,\xi_2,\xi_3)| \lesssim N_1^{-1} N_{max}^{2\sigma+2-\alpha} .
    \end{equation}
    \item {\bf Sub-region $B_2$}: If $N_3\ll N_1\sim N_2\sim N_4$,
      \begin{equation}\label{a4tilde-B2}
      |\widetilde{a}_4(\xi_1,\xi_2,\xi_3)| \lesssim N_{max}^{2\sigma+1-\alpha} .
    \end{equation}
    \end{enumerate}
    \item {\bf Region $C$}: $N_{thd} \ll N_{max}$.
    \begin{enumerate}
      \item {\bf Sub-region $C_1$}: If $N_1\sim N_2\gg N_4\gtrsim N_3$,
       \begin{equation}\label{a4tilde-C1}
      |\widetilde{a}_4(\xi_1,\xi_2,\xi_3)| \lesssim N_4 N_{max}^{2\sigma-\alpha} .
    \end{equation}
     \item {\bf Sub-region $C_2$}: If $N_1\lesssim N_2\ll N_3\sim N_4$,
       \begin{equation}\label{a4tilde-C2}
      |\widetilde{a}_4(\xi_1,\xi_2,\xi_3)| \lesssim N_1^{-1} N_{max}^{2\sigma+2-\alpha} .
    \end{equation}
    \item {\bf Sub-region $C_3$}: If $N_1\ll N_3\ll N_2\sim N_4$,
    \begin{equation}\label{a4tilde-C3b}
      |\widetilde{a}_4(\xi_1,\xi_2,\xi_3)| \lesssim N_1^{-1} N_3^{2\sigma+2-\alpha} .
    \end{equation}
    Moreover, we have 
   \begin{equation}\label{a4tilde-C3}
      \left|\widetilde{a}_4(\xi_1,\xi_2,\xi_3) + \frac{\xi_3\nu_\sigma(\xi_3)}{\xi_1\omega'(\xi_3)} \right| \lesssim N_1^{2\sigma}N_3^{1-\alpha} + N_1^{-1}N_3N_{max}^{2\sigma+1-\alpha} .
    \end{equation}
    
    \item {\bf Sub-region $C_4$}: If $N_3\lesssim N_1\ll N_2\sim N_4$,
       \begin{equation}\label{a4tilde-C4}
      |\widetilde{a}_4(\xi_1,\xi_2,\xi_3)| \lesssim N_1^{2\sigma+1-\alpha} .
    \end{equation}
    \end{enumerate}
  \end{enumerate}
\end{lemma}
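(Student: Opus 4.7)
My plan is to decompose $\widetilde{a}_4 = \widetilde{a}_{41} + \widetilde{a}_{42} + \widetilde{a}_{43}$ as in \eqref{def:a4tilde} and treat each piece with the bounds from Lemmas \ref{lem:b3tilde} and \ref{lem:diff-b3tilde}. The term $\widetilde{a}_{43} = -(\xi_1+\xi_2)\widetilde{b}_3(\xi_1,\xi_2)$ is the easiest: the prefactor has size $M_3$ and Lemma \ref{lem:b3tilde} provides a pointwise control on $\widetilde{b}_3$ depending only on the relative size of $|\xi_1|$ and $|\xi_2|$. For $\widetilde{a}_{41}$ and $\widetilde{a}_{42}$, which are symmetric in $(\xi_1,\xi_2) \leftrightarrow (\xi_3,\xi_4)$-type exchanges, I would first use the mean value theorem to write
\[
\widetilde{b}_3(\xi_1+\xi_3,\xi_2) - \widetilde{b}_3(\xi_1+\xi_3,-\xi_1) = (\xi_1+\xi_2)\int_0^1 \partial^{(0,1)}\widetilde{b}_3\bigl(\xi_1+\xi_3,\, -\xi_1+\theta(\xi_1+\xi_2)\bigr)\,d\theta,
\]
so that $\widetilde{a}_{41}$ gains an extra factor of $M_3 M_2$ (since $\xi_1+\xi_3$ has size $M_2$). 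Combined with Lemma \ref{lem:diff-b3tilde}, this yields a uniform bound involving $M_{min}M_{med}$ times a decay in the maximal frequency, which is precisely the form we want.

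In Regions A and B, applying these two mechanisms and keeping track of which pair of frequencies is large in each $\widetilde{b}_3$ evaluation is sufficient: in Region A all frequencies are comparable, and in Region B either $M_{min} \sim M_{max} \sim N_{max}$ (so the pointwise bound suffices) or one handles the specific combination in $B_1, B_2$. Region C is where the analysis branches into the five sub-cases $C_1$–$C_4$ listed in the statement, each requiring a careful choice of how to split. In $C_1,\, C_2, \, C_4$ I will rely on Lemma \ref{lem:diff-b3tilde} applied in the correct slot, distinguishing whether the middle argument of $\widetilde{b}_3$ is the small or the large frequency; for $C_4$ one needs to observe that $|\xi_2+\xi_3|\sim N_2$ so that $\widetilde{a}_{42}$ has its two arguments of comparable size $N_2$, which forces the use of \eqref{lem:b3tilde.1}.

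The main obstacle will be Region $C_3$ ($N_1 \ll N_3 \ll N_2 \sim N_4$), where the basic bound \eqref{a4tilde-C3b} is far from enough to close the energy estimate and one needs the refined decomposition \eqref{a4tilde-C3}. Here the strategy is to perform a second-order Taylor expansion of the differences in $\widetilde{a}_{41}$ and $\widetilde{a}_{42}$ around a well-chosen base point, carefully track which first-order term in the expansion does not cancel after combining the three pieces of $\widetilde{a}_4$, and show that this surviving term has the explicit form $-\xi_3\nu_\sigma(\xi_3)/(\xi_1\omega'(\xi_3))$. The remainder, coming from the second-order terms, should be controlled by $N_1^{2\sigma}N_3^{1-\alpha} + N_1^{-1}N_3 N_{max}^{2\sigma+1-\alpha}$ by using bounds on $\partial^{(0,2)}\widetilde{b}_3$ analogous to Lemma \ref{lem:diff-b3tilde} (which follow from Hypothesis \ref{hyp1} since $\omega_{\alpha+1}\in C^3(\mathbb{R}\setminus\{0\})$). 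Algebraically, the identification of the main term relies on the factorization $\widetilde{b}_3 = \widetilde{\nu}_\sigma(\xi_2)/\Omega_3(\xi_1,\xi_2) + \widetilde{\nu}_\sigma(\xi_1)/\Omega_3(\xi_1,\xi_2)$ combined with $\partial_{\xi_2}\Omega_3(0,\xi_3) = \omega_{\alpha+1}'(\xi_3) - \omega_{\alpha+1}'(0) \cdot$ (and using that the $-\omega_{\alpha+1}'(0)$ contribution cancels against the analogous term from $\widetilde{a}_{42}$). Verifying this cancellation cleanly, without losing track of the small factors $M_{min}, M_{med}$, is the delicate bookkeeping step.
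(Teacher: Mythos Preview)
Your overall strategy---decompose $\widetilde{a}_4=\widetilde{a}_{41}+\widetilde{a}_{42}+\widetilde{a}_{43}$ and control each piece via Lemma~\ref{lem:b3tilde} and mean-value arguments---is the right starting point, but the plan of treating each $\widetilde{a}_{4i}$ separately misses the essential cancellations in Regions $C_1$, $C_3$, $C_4$, which occur \emph{across} different pieces, not within a single $\widetilde{a}_{4i}$.

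Concretely, in $C_1$ ($N_1\sim N_2\gg N_4\gtrsim N_3$) your mean-value step on $\widetilde{a}_{41}$ produces $\partial^{(0,1)}\widetilde{b}_3(\xi_1+\xi_3,\cdot)$ with both arguments of size $\sim N_1$; Lemma~\ref{lem:diff-b3tilde} requires the first argument to be much smaller than the second, so it does not apply. Even if you extend the lemma to comparable arguments, the bound involves the inverse of the third variable $|\mu+\eta|$, which here varies between $N_3$ and $N_4$ and is too small to close the estimate $\lesssim N_4 N_1^{2\sigma-\alpha}$. The paper instead further splits $\widetilde{a}_{4i}=\widetilde{a}_{4i1}+\widetilde{a}_{4i2}$ and exploits a second-order cancellation in the cross-combination $\widetilde{a}_{411}+\widetilde{a}_{421}$, expanding both $\Omega_3$ and $\widetilde{a}_3$ to second order in the small quantities $\xi_3,\xi_4,\xi_1+\xi_2$.

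In $C_4$ the problematic piece is $\widetilde{a}_{421}=(\xi_2+\xi_3)\widetilde{b}_3(\xi_2+\xi_3,\xi_1)$: by itself it is only $\lesssim N_1^{-1}N_2^{2\sigma+2-\alpha}$, far larger than the claimed $N_1^{2\sigma+1-\alpha}$. It must be paired with $\widetilde{a}_{43}$ and rewritten as $\xi_2[\widetilde{b}_3(\xi_2+\xi_3,\xi_1)-\widetilde{b}_3(\xi_2,\xi_1)]+\text{(lower order)}$ before Lemma~\ref{lem:diff-b3tilde} can be used. The same recombination $\widetilde{a}_{421}+\widetilde{a}_{43}$ is needed in $C_3$. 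Finally, your description of \eqref{a4tilde-C3} is off: the explicit main term $-\xi_3\widetilde{\nu}_\sigma(\xi_3)/(\xi_1\omega_{\alpha+1}'(\xi_3))$ does not arise from a cancellation between $\widetilde{a}_{41}$ and $\widetilde{a}_{42}$, but is extracted directly from the single piece $\widetilde{a}_{412}=-(\xi_1+\xi_3)\widetilde{b}_3(\xi_1+\xi_3,-\xi_1)$ by Taylor-expanding $\Omega_3(\xi_1+\xi_3,-\xi_1)=\xi_1\omega_{\alpha+1}'(\xi_3)+O(N_1^{\alpha+1})$ and $\widetilde{\nu}_\sigma(\xi_1+\xi_3)=\widetilde{\nu}_\sigma(\xi_3)+O(N_1N_3^{2\sigma})$; there is no $\omega_{\alpha+1}'(0)$ cancellation involved.
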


\begin{remark}
\begin{enumerate}
\item[$(i)$]
 For the region $C_2$ in the particular case $M_3:=|\xi_1+\xi_2|\ll N_1\sim N_2$,  we have the simpler bound
 $$
 |\widetilde{a}_4(\xi_1,\xi_2,\xi_3)| \lesssim N_1^{-1}N_2^{-1}M_3 N_{max}^{2\sigma+2-\alpha}.
 $$
 However, estimate \eqref{a4tilde-C2} is good enough for our purpose.
\item[$(ii)$]
In the resonant region $A$, we have the better bound 
\begin{equation} \label{bound:a4tilde-resonant}
|\widetilde{a}_{4}| \lesssim M_3N_{max}^{2\sigma-\alpha} .
\end{equation}
However, we will not use this bound in our energy estimate.
\end{enumerate}
\end{remark}
\begin{proof}
We will extensively use Lemma \ref{lem:b3tilde} to bound $\widetilde{a}_4$.
Recalling the definition of $\widetilde{a}_4$ in \eqref{def:a4tilde}, we also decompose 
\begin{equation}  \label{def:a4tilde.2}
\widetilde{a}_{41}=\widetilde{a}_{411}+\widetilde{a}_{412} \quad \text{and} \quad \widetilde{a}_{42}=\widetilde{a}_{421}+\widetilde{a}_{423}
\end{equation}
where 
\begin{align*}
\widetilde{a}_{411} & = (\xi_1+\xi_3) \widetilde{b_3}(\xi_1+\xi_3, \xi_2); \\
\widetilde{a}_{412}& =-(\xi_1+\xi_3) \widetilde{b_3}(\xi_1+\xi_3, -\xi_1); \\
\widetilde{a}_{421}& =(\xi_2+\xi_3)\widetilde{b_3}(\xi_2+\xi_3, \xi_1); \\
\widetilde{a}_{422}& = - (\xi_2+\xi_3)\widetilde{b_3}(\xi_2+\xi_3, -\xi_2) .
\end{align*}
We will denote by $M_1, M_2, M_3$ some dyadic numbers satisfying $M_1\sim|\xi_2+\xi_3|$, $M_2\sim |\xi_1+\xi_3|$ and, $M_3\sim |\xi_1+\xi_2|$.  
\vskip .3cm
\noindent
{\bf Region $A$}: $N_1\sim N_2\sim N_3\sim N_4 =: N$\\
Since $|\xi_1+\xi_3|\lesssim |\xi_1|\sim |\xi_2|$ Lemma \ref{lem:b3tilde} provides
\begin{align*}
|\widetilde{a}_{41}| &\le|(\xi_1+\xi_3)\widetilde{b}_3(\xi_1+\xi_3, \xi_2)| + |(\xi_1+\xi_3)\widetilde{b}_3(\xi_1+\xi_3, -\xi_1)|\\
&\lesssim   M_2(M_2^{-1}N^{2\sigma+1-\alpha} + M_2^{-1}N^{2\sigma+1-\alpha}) \\
&\lesssim N^{2\sigma+1-\alpha}.
\end{align*}
Similarly, we obtain the same bound for $|\widetilde{a}_{42}| + |\widetilde{a}_{43}|\lesssim N^{2\sigma+1-\alpha}$.

\vskip .3cm
\noindent
{\bf Region $B_1$}: $N_1\ll N_2\sim N_3\sim N_4=:N$\\
In this region, we have $M_1\sim M_2\sim M_3\sim N$. Therefore we get on one hand
$$
|\widetilde{a}_{411}| + |\widetilde{a}_{422}| + |\widetilde{a}_{43}| \lesssim N^{2\sigma+1-\alpha},
$$
and on the other hand
$$
|\widetilde{a}_{412}| + |\widetilde{a}_{421}| \lesssim N_1^{-1}N^{2\sigma+2-\alpha}.
$$
Then, \eqref{a4tilde-B1} follows immediately.

\vskip .3cm
\noindent
{\bf Region $B_2$}: $N_3\ll N_1\sim N_2\sim N_4 =: N$\\ 
Again, we have $M_1\sim M_2\sim M_3\sim N$ and every term $\widetilde{a}_{4ij}$, $i,j\in\{1,2\}$, as well as $\widetilde{a}_{43}$ is bounded by $N^{2\sigma+1-\alpha}$.

\vskip .3cm
\noindent
{\bf Region $C_1$}: $N_3\lesssim N_4 \ll N_1\sim N_2=: N$\\ 
In $C_1$ it holds that $M_3\lesssim N_4 \ll M_1\sim M_2\sim N$. First, we deduce from \eqref{lem:b3tilde.1} that
\begin{equation}\label{lem:a4tilde.1}
|\widetilde{a}_{43}| = |(\xi_1+\xi_2)\widetilde{b}_3(\xi_1,\xi_2)|  \lesssim M_3 N^{2\sigma-\alpha}.
\end{equation}
Using the symmetry in $(\xi_1,\xi_2)$ it remains to estimate 
\begin{align*}
\widetilde{a}_{411}+\widetilde{a}_{421} &= \xi_1[\widetilde{b}_3(\xi_1+\xi_3,\xi_2) - \widetilde{b}_3(\xi_2+\xi_3, \xi_1)] \\
&\quad +\xi_3 \widetilde{b}_3(\xi_1+\xi_3,\xi_2) - \xi_4 \widetilde{b}_3(\xi_2+\xi_3, \xi_1)\\
& := \widetilde{a}_{411}^{(1)} + \widetilde{a}_{411}^{(2)} + \widetilde{a}_{411}^{(3)}.
\end{align*}
From \eqref{lem:b3tilde.1} we get
\begin{equation}\label{lem:a4tilde.2}
|\widetilde{a}_{411}^{(2)}| + |\widetilde{a}_{411}^{(3)}| \lesssim (N_3+N_4)N^{2\sigma-\alpha} \lesssim N_4 N^{2\sigma-\alpha}.
\end{equation}
Then, we may rewrite $\widetilde{a}_{411}^{(1)}$ as
\begin{align*}
  \widetilde{a}_{411}^{(1)} &= \xi_1\left( \frac{\widetilde{a}_3(\xi_1+\xi_3,\xi_2)}{\Omega_3(\xi_1+\xi_3, \xi_2)} - \frac{\widetilde{a}_3(\xi_2+\xi_3, \xi_1)}{\Omega_3(\xi_2+\xi_3, \xi_1)} \right)\\
  &= \xi_1\Big( \widetilde{a}_3(\xi_1+\xi_3,\xi_2)\left(\frac{1}{\Omega_3(\xi_1+\xi_3, \xi_2)} - \frac{1}{\Omega_3(\xi_2+\xi_3, \xi_1)}\right) \\
  &\quad + \frac{\widetilde{a}_3(\xi_1+\xi_3,\xi_2) -\widetilde{a}_3(\xi_2+\xi_3, \xi_1)}{\Omega_3(\xi_2+\xi_3, \xi_1)}  \Big)\\
  &:= \widetilde{a}_{411}^{(11)} + \widetilde{a}_{411}^{(12)}.
\end{align*}
Using twice Taylor formula we deduce
\begin{align*}
 |\Omega_3(\xi_2+\xi_3, \xi_1) - \Omega_3(\xi_1+\xi_3, \xi_2)| &= |\omega_{\alpha+1}(\xi_2+\xi_3)-\omega_{\alpha+1}(\xi_2) + \omega_{\alpha+1}(\xi_1) - \omega_{\alpha+1}(\xi_1+\xi_3)|\\
 &\lesssim |\xi_3\omega_{\alpha+1}'(\xi_2) - \xi_3\omega_{\alpha+1}'(\xi_1)| + N_3^2N^{\alpha-1}\\
 &\lesssim N_3M_3N^{\alpha-1} + N_3^2N^{\alpha-1}\\
 &\lesssim N_4^2N^{\alpha-1}.
\end{align*}
It follows that
\begin{equation}\label{lem:a4tilde.3}
|\widetilde{a}_{411}^{(11)}| \lesssim N M_3N^{2\sigma} \frac{N_4^2N^{\alpha-1}}{(N_4N^\alpha)^2} \lesssim N_4N^{2\sigma-\alpha}.
\end{equation}
Similarly, recalling that $\widetilde{\nu}_\sigma(\xi)=\xi_|\xi|^{2\sigma}{\bf 1}_{|\xi|\ge 1}$ and $2\sigma+1>0$ we obtain
\begin{align*}
 |\widetilde{a}_3(\xi_1+\xi_3,\xi_2) -\widetilde{a}_3(\xi_2+\xi_3, \xi_1)| &\lesssim |\xi_4\widetilde{\nu}_\sigma'(\xi_1) - \xi_4\widetilde{\nu}_\sigma'(\xi_2)| + N_4^2N^{2\sigma-1}\\
 &\lesssim N_4M_3N^{2\sigma-1} + N_4^2N^{2\sigma-1}\\
 &\lesssim N_4^2N^{2\sigma-1},
\end{align*}
from which we conclude
\begin{equation}\label{lem:a4tilde.4}
|\widetilde{a}_{411}^{(12)}| \lesssim N\frac{N_4^2N^{2\sigma-1}}{N_4N^\alpha} \lesssim N_4N^{2\sigma-\alpha}.
\end{equation}
Gathering \eqref{lem:a4tilde.1}-\eqref{lem:a4tilde.2}-\eqref{lem:a4tilde.3}-\eqref{lem:a4tilde.4} we get the desired estimate for the region $C_1$.

\vskip .3cm
\noindent
{\bf Region $C_2$}: $N_1\lesssim N_2 \ll N_3\sim N_4=: N$\\ 
Thanks to \eqref{lem:b3tilde.2}, we directly obtain the rough bounds
$$
|\widetilde{a}_{411}| + |\widetilde{a}_{422}| \lesssim N_2^{-1}N^{2\sigma+2-\alpha} \lesssim N_1^{-1}N^{2\sigma+2-\alpha}
$$
and
$$
|\widetilde{a}_{412}| + |\widetilde{a}_{421}| \lesssim N_1^{-1}N^{2\sigma+2-\alpha}.
$$
In the same way, using \eqref{lem:b3tilde.1} or \eqref{lem:b3tilde.2} we always have
$$
|\widetilde{a}_{43}| \lesssim M_3 N_1^{-1}N_2^{-1} N_2^{2\sigma+2-\alpha}\lesssim N_1^{-1} N^{2\sigma+2-\alpha}.
$$

\vskip .3cm
\noindent
{\bf Region $C_3$}: $N_1\ll N_3 \ll N_2\sim N_4=: N$\\ 
We only show the most difficult estimate \eqref{a4tilde-C3}. Note that in $C_3$, we have $M_2\sim N_3$. According to Lemma \ref{lem:b3tilde} we get
$$
|\widetilde{a}_{411}| + |\widetilde{a}_{422}| \lesssim N^{2\sigma+1-\alpha} \lesssim N_1^{-1}N_3N^{2\sigma+1-\alpha}.
$$
For $\widetilde{a}_{412}$, we split
\begin{align}
\widetilde{a}_{412} &= -\xi_3\widetilde{b}_3(\xi_1+\xi_3,-\xi_1) + \mathcal{O}(N_3^{2\sigma+1-\alpha}) \notag \\
&= -\xi_3\frac{\nu_\sigma(\xi_1+\xi_3) - \nu_\sigma(\xi_1)}{\Omega_3(\xi_1+\xi_3, -\xi_1)} + \mathcal{O}(N_1^{2\sigma}N_3^{1-\alpha}). \label{lem:a4tilde.5}
\end{align}
Next, using a Taylor expansion of $\omega_{\alpha+1}$, we rewrite $$ \Omega_3(\xi_1+\xi_3, -\xi_1) = \xi_1\omega_{\alpha+1}'(\xi_3) + R(\xi_1,\xi_3)$$ with $|R(\xi_1,\xi_3)| \lesssim N_1^{\alpha+1}$. It follows that
\begin{equation}\label{lem:a4tilde.6}
\frac 1{\Omega_3(\xi_1+\xi_3, -\xi_1)} = \frac{1}{\xi_1\omega'_{\alpha+1}(\xi_3)} \frac{1}{1+\frac{R(\xi_1,\xi_3)}{\xi_1\omega'(\xi_3)}} = \frac{1}{\xi_1\omega'_{\alpha+1}(\xi_3)} + \mathcal{O}(N_1^{\alpha-1}N_3^{-2\alpha}) .
\end{equation}
Moreover, the mean value theorem provides
\begin{equation}\label{lem:a4tilde.7}
\widetilde{\nu}_\sigma(\xi_1+\xi_3) - \widetilde{\nu}_\sigma(\xi_1) = \widetilde{\nu}_\sigma(\xi_3) - \widetilde{\nu}_\sigma(\xi_1) + \mathcal{O}(N_1N_3^{2\sigma}) = \widetilde{\nu}_\sigma(\xi_3) + \mathcal{O}(N_1^{2\sigma+1}).
\end{equation}
Combining \eqref{lem:a4tilde.5}-\eqref{lem:a4tilde.6}-\eqref{lem:a4tilde.7} we infer
$$
\left|\widetilde{a}_{412} + \frac{\xi_3\widetilde{\nu}_\sigma(\xi_3)}{\xi_1\omega'(\xi_3)} \right| \lesssim N_1^{2\sigma}N_3^{1-\alpha}+N_3^{2+2\sigma-2\alpha}N_1^{\alpha-1} \lesssim N_1^{2\sigma}N_3^{1-\alpha},
$$
since $2\sigma+1-\alpha<0$.
To conclude the proof of \eqref{a4tilde-C3} we bound the term $\widetilde{a}_{421}+\widetilde{a}_{43}$ thanks to \eqref{lem:diff-b3tilde.0} by
\begin{align*}
  \widetilde{a}_{421}+\widetilde{a}_{43} &= \xi_2[\widetilde{b}_3(\xi_2+\xi_3,\xi_1) - \widetilde{b}_3(\xi_1,\xi_2)] + \mathcal{O}(N^{2\sigma+1-\alpha}) + \mathcal{O}(N_1^{-1}N_3N^{2\sigma+1-\alpha})\\
  &= \xi_2\xi_3\partial^{(0,1)}\widetilde{b}_3(\xi_1, \xi^\ast) + \mathcal{O}(N_1^{-1}N_3N^{2\sigma+1-\alpha}),\quad |\xi^\ast|\sim N\\
  &= \mathcal{O}(N_1^{-1}N_3N^{2\sigma+1-\alpha}).
\end{align*}

\vskip .3cm
\noindent
{\bf Region $C_4$}: $N_3\lesssim N_1 \ll N_2\sim N_4=: N$\\ 
From Lemma \ref{lem:b3tilde}, we easily check, by using $2\sigma+1-\alpha<0$, that
$$
|\widetilde{a}_{411}| + |\widetilde{a}_{422}| \lesssim M_2M_2^{-1}N^{2\sigma+1-\alpha} + N^{2\sigma+1-\alpha} \ll N_1^{2\sigma+1-\alpha}.
$$
Next, we claim that
$$
|\widetilde{a}_{412}| \lesssim N_1^{2\sigma+1-\alpha}.
$$
Indeed, if $N_3\ll N_1$, one has $M_2\sim N_1$ and $|\widetilde{b}_3(\xi_1+\xi_3, -\xi_1)| \lesssim N_1^{2\sigma-\alpha}$. In the case $N_3\sim N_1$, it holds $|\widetilde{b}_3(\xi_1+\xi_3, -\xi_1)| \lesssim M_2^{-1}N_1^{2\sigma+1-\alpha}$.\\
It remains to estimate
\begin{align*}
\widetilde{a}_{421} + \widetilde{a}_{43} &= 
\xi_2[\widetilde{b}_3(\xi_2+\xi_3,\xi_1) - \widetilde{b}_3(\xi_2, \xi_1)]  + [\xi_3\widetilde{b}_3(\xi_2+\xi_3,\xi_1) - \xi_1\widetilde{b}_3(\xi_2,\xi_1)]\\
&= \xi_2 \xi_3 \partial^{(1,0)} \widetilde{b}_3(\xi^\ast, \xi_1) + \mathcal{O}(N^{2\sigma+1-\alpha}), \quad |\xi^\ast|\sim N,\\
&= \mathcal{O}(N_3 N_1^{-1} N^{2\sigma+1-\alpha}) + \mathcal{O}(N_1^{2\sigma+1-\alpha})\\
&= \mathcal{O}(N_1^{2\sigma+1-\alpha}),
\end{align*}
where we used Lemma \ref{lem:diff-b3tilde}. This concludes the proof of \eqref{a4tilde-C4}.
\end{proof}

\section*{Acknowledgements} 
The authors would like to thank Mats Ehrnstr\"om for his valuable comments on an earlier version of this work.
D.P. was partially supported by the European research Council (ERC) under
the European Union's Horizon 2020 research and innovation programme (Grant agreement 101097172-GEOEDP). He also would like to thank the University Paris-Saclay for the kind hospitality during the redaction of this article. 

\bibliographystyle{amsplain}

\end{document}